\documentclass[a4paper,12pt,reqno]{amsart}


\usepackage{bigints}
\usepackage{ esint }
\usepackage{amsmath}
\usepackage{amsfonts}
\usepackage{amssymb}
\usepackage{amsthm}
\usepackage{mathrsfs}
\usepackage[shortlabels]{enumitem}
\usepackage{graphicx}
\usepackage{color}
\usepackage[latin1]{inputenc}
\usepackage{MnSymbol}
\usepackage{enumitem}
\usepackage{extpfeil}
\usepackage{mathtools}
\usepackage{url}
\usepackage[margin=1.1in]{geometry}
\usepackage{fancyhdr}
\usepackage[all,cmtip]{xy}
\usepackage{array,multirow}
\usepackage[x11names]{xcolor}
\usepackage{tikz}
\usepackage{pgf}
\usepackage{appendix}
\usepackage{float}
\RequirePackage{hyperref}


\setcounter{tocdepth}{2}




\fancyhf{}
\fancyhead[C]{\thepage}
\pagestyle{fancy}
\headheight = 13pt

\theoremstyle{plain}
\newtheorem{theorem}{Theorem}[section]

\newtheorem{theoremletter}{Theorem}

\newtheorem{lem}[theorem]{Lemma}
\newtheorem{prop}[theorem]{Proposition}
\newtheorem{teo}[theorem]{Theorem}
\newtheorem{defi}[theorem]{Definition}
\newtheorem{claim}{Claim}
\newtheorem*{Convention}{Convention}
\newtheorem{cor}{Corollary}









\keywords{Measured Group Theory, Isoperimetric Profile, Regular Embeddings, Locally compact groups, Polish spaces}


\newcommand{\Ga}{\Gamma}

\newcommand{\ga}{\gamma}
\newcommand{\eps}{\epsilon}

\newcommand{\supp}{\textrm{supp}}

\newcommand{\df}{\mathcal F} 
\newcommand{\La}{\Lambda}
\newcommand{\la}{\lambda}
\newcommand{\Om}{\Omega}

\newcommand{\tp}{\otimes}
\newcommand{\diam}{\text{diam}}

\newcommand{\act}{\curvearrowright}

\title{Isoperimetric Profiles and Regular Embeddings of locally compact groups}

\author{Juan Paucar}
\address{Institut de Math\'ematiques de Jussieu-Paris Rive Gauche, Universit\'e Paris Cit\'e, 75205 Paris Cedex 13, France}
\email{juan.paucar@imj-prg.fr}



\date{\today}

\begin{document}
\begin{abstract}
In this article we extend the notion of $L^p$-measure subgroups couplings, a quantitative asymmetric version of measure equivalence  that was introduced by Delabie, Koivisto, Le Ma\^itre and Tessera for finitely generated groups, to the setting of locally compact compactly generated unimodular groups. As an example of these couplings; using ideas from Bader and Rosendal, we prove a "dynamical criteria" for the existence of regular embeddings between amenable locally compact compactly generated unimodular groups, namely the existence of an $L^\infty$-measure subgroup coupling that is coarsely $m$-to-$1$. We also prove that the existence of an $L^p$-measure subgroup that is coarsely $m$-to-$1$ implies the monotonicity of the $L^p$-isoperimetric profile, as well as sublinear version of this result. As a corollary we obtain that the $L^p$-isoperimetric profile is monotonous under regular embeddings, as well as coarse embeddings, between amenable unimodular locally compact compactly generated groups.  
 
 \vspace*{2mm} \noindent{2020 Mathematics Subject Classification:} 37A20,  22D05, 
 22F10, 22D40, 20F69, 20F65.

\end{abstract}
\maketitle

\tableofcontents

\section{Introduction}

\subsection{Regular Embeddings and Measure Subgroup Couplings}
Let's consider $\Ga$ and $\La$ two finitely generated groups with fixed word metrics. In his seminal article \cite{Gromov}, Gromov proved the following characterization of the existence of quasi-isometries between finitely generated groups:

\begin{prop}[Section $0.2.C'_2$ in \cite{Gromov}] 
The following are equivalent:
\begin{enumerate}
    \item $\Ga$ and $\La$ are quasi-isometric.
    \item There exists a locally compact topological space $\Om$ with a continuous $\Ga$-action and a continuous $\La$-action that commute with each other, such that both actions are proper and cocompact. 
\end{enumerate}
\label{DynCriteriaGromov}    
\end{prop}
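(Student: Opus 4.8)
The plan is to prove both implications, using a dictionary between couplings and quasi-isometries built from relatively compact fundamental domains. Throughout, fix word metrics $d_\Ga,d_\La$ and record the two commuting actions as a single $\Ga\times\La$-action on $\Om$.

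For $(1)\Rightarrow(2)$ I would start from a quasi-isometry $f:\Ga\to\La$ with quasi-inverse and constants $(C,C)$, and realize $\Om$ as a space of functions. Let $\Om$ be the closure, in the topology of pointwise convergence on $\La^{\Ga}$ (with $\La$ discrete), of the orbit $\{x\mapsto\la f(\ga^{-1}x):\ga\in\Ga,\ \la\in\La\}$, intersected with the set of $h:\Ga\to\La$ obeying the uniform two-sided bound $\tfrac1C d_\Ga(x,y)-C\le d_\La(h(x),h(y))\le Cd_\Ga(x,y)+C$ and the coarse surjectivity inherited from $f$. Let $\Ga$ act by $(\ga\cdot h)(x)=h(\ga^{-1}x)$ and $\La$ by $(\la\cdot h)(x)=\la h(x)$; these are continuous and commute. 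I would then verify: (i) local compactness, since each slice $\{h\in\Om:h(e)=\la_0\}$ is closed in the Tychonoff product of the finite balls $B_\La(\la_0,Cd_\Ga(e,x)+C)$, hence compact, and $\Om$ is the disjoint union of such compact open slices; (ii) properness of both actions, since if $\ga\cdot h$ and $h$ lie in a common slice then $h(\ga^{-1})=h(e)$, so the lower bound forces $d_\Ga(e,\ga)\le C^2$, while for $\La$ the stabiliser computation is immediate; (iii) cocompactness, since coarse surjectivity of the elements of $\Om$ lets one translate any $h$ into the compact set $\{h\in\Om:h(e)\in B_\La(\la_*,r)\}$. These checks are routine once the defining constraints are in place.

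For $(2)\Rightarrow(1)$ I would enlarge a compact set so that a single relatively compact $K$ satisfies $\Ga K=\La K=\Om$, and work with the relation $R=\{(\ga,\la):\ga K\cap\la K\neq\emptyset\}\subseteq\Ga\times\La$. Coboundedness of both projections is immediate from cocompactness: $\ga K\subseteq\Om=\La K$ gives, for each $\ga$, some $\la$ with $(\ga,\la)\in R$, and symmetrically. Choosing for each $\ga$ one such $\la=:\phi(\ga)$ produces a candidate map $\phi:\Ga\to\La$, and the content is that $R$ is a quasi-isometric correspondence: its fibres should have uniformly bounded $\La$- and $\Ga$-diameter, and $(\ga,\la),(\ga',\la')\in R$ should force $d_\Ga(\ga,\ga')$ and $d_\La(\la,\la')$ to be comparable up to fixed constants. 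Via the commutation relation $\ga^{-1}(\la K)=\la(\ga^{-1}K)$ and a fixed cover of $K\cup sK$ (for $s$ a generator) by finitely many $\Ga$-translates of $K$, the coarse-Lipschitz estimate reduces to one uniform statement: that the number of $\La$-translates of $K$ meeting a single $\Ga$-translate $\ga K$ is bounded independently of $\ga$.

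This uniform mutual local finiteness is, I expect, the main obstacle, and it is exactly where the commutation of the two cocompact proper actions must be exploited: properness of each action alone bounds only the translates meeting a \emph{fixed} compact set, whereas here the target $\ga K$ both moves and distorts with $\ga$. To overcome it I would pass to relatively compact Borel fundamental domains $X_\Ga,X_\La$ and produce a $\Ga\times\La$-invariant Radon measure $\mu$ on $\Om$ that is finite and positive on fundamental domains; its existence uses properness together with the unimodularity of the setting, the point being that commutation makes any $\La$-invariant measure quasi-invariant under $\Ga$ with a character one checks is trivial, so the naive averaging construction closes up. Against such a $\mu$ the count becomes a volume estimate: $\mu(\ga K)=\mu(K)$ is constant by invariance, each $\la K$ carries constant mass, and the multiplicity of the $\La$-tiling is bounded by $|\{\eta\in\La:\eta K\cap K\neq\emptyset\}|$, so only boundedly many $\La$-translates can meet $\ga K$. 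Feeding this back yields the coarse-Lipschitz bound; the lower bound and coarse surjectivity follow by exchanging the roles of $\Ga$ and $\La$, so that $\phi$ is the desired quasi-isometry.
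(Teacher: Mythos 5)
The paper itself does not prove this proposition (it is quoted from \cite{Gromov}), so the comparison below is with the standard argument. Your direction $(1)\Rightarrow(2)$ is essentially that standard construction: the orbit closure of the quasi-isometry in a function space with the product topology, compact open slices $\{h:h(e)=\la_0\}$, and the routine verifications of continuity, properness and cocompactness; this half is fine, and it is also the template that Bader--Rosendal and Section 3 of this paper follow.

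Direction $(2)\Rightarrow(1)$ has a genuine gap, located exactly at the step you single out as the main obstacle. You are right that properness only controls sets of the form $\{\la\in\La:\la K\cap Q\neq\emptyset\}$ for a \emph{fixed} compact $Q$, so your relation $R=\{(\ga,\la):\ga K\cap\la K\neq\emptyset\}$ needs uniform control over the moving sets $\ga K$. But the proposed repair --- a $\Ga\times\La$-invariant Radon measure on $\Om$ --- cannot work in general. Such a measure descends (via properness of the $\La$-action, e.g.\ using a partition of unity subordinate to the translates of $K$) to a $\Ga$-invariant finite measure on the compact quotient $\Om/\La$, and for non-amenable $\Ga$ no fixed-point theorem produces one; your justification is incorrect as stated, since a $\La$-invariant measure is not canonically $\Ga$-quasi-invariant: the $\Ga$-action merely permutes the simplex of $\La$-invariant measures (equivalently, of measures on $\Om/\La$), and closing up the ``averaging'' is precisely an amenability-type fixed point problem. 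This is not a removable technicality: if every topological coupling carried such a measure, then (after forcing freeness by multiplying with a free p.m.p.\ $\Ga\times\La$-space, as in Lemma \ref{free}) quasi-isometry would imply measure equivalence, which is known to fail (property (T) is an ME-invariant but not a QI-invariant). Indeed this is exactly why Theorem \ref{teoprincipal} of this paper and its discrete predecessor in \cite{Romain} assume amenability of $H$: amenability is what produces the invariant measure on the compact quotient. Finally, even granting the measure, the counting step fails on its own terms: a translate $\la K$ can meet $\ga K$ in a set of arbitrarily small or zero measure, so constancy of $\mu(\la K)$ does not bound the number of translates meeting $\ga K$.

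The obstacle you identified is avoided, not solved, in the standard proof: one never needs uniform local finiteness of the tile-intersection relation. Fix $x_0\in K$ and define $\phi(\ga)\in\La$ by $\ga x_0\in\phi(\ga)K$ (possible since $\La K=\Om$); this is an orbit-based relation, not a tile-intersection relation. Setting $k_i=\phi(g_i)^{-1}g_ix_0\in K$ and using only the commutation of the two actions, one checks the identity
\begin{equation*}
    \phi(g_1)^{-1}\phi(g_2)\,k_2=(g_2g_1^{-1})\,k_1 .
\end{equation*}
Hence if $g_2g_1^{-1}$ lies in a fixed ball $B\subset\Ga$, then $\phi(g_1)^{-1}\phi(g_2)K$ meets the \emph{fixed} compact set $BK$, and properness of the $\La$-action bounds $d_\La(\phi(g_1),\phi(g_2))$; properness of the $\Ga$-action applied to the same identity gives the reverse bound, and cocompactness of the $\Ga$-action gives coarse surjectivity. (The estimates are in the right-invariant word metric on the source, which one repairs by precomposing $\phi$ with inversion.) The entire point of the commutation hypothesis is that it transports the comparison of $\phi(g_1)$ and $\phi(g_2)$ back to the basepoint, so that properness is only ever invoked on a fixed compact set.
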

We will say that such a topological space $\Om$ is a \textbf{topological equivalence coupling} between $\Ga$ and $\La$.
This proposition motivated Gromov to introduce the following definition:
\begin{defi}[Definition $0.5.E_1$ in \cite{Gromov}]
    We say that a standard measured space $(\Om,\mu)$ is a \textbf{ measure equivalence coupling} between $\Ga$ and $\La$ if there exists measure preserving commuting $\Ga,\La$-actions such that both actions are free and have fundamental domains of finite measure.  
    We say as well that $\Ga$ and $\La$ are \textbf{measure equivalent} if there exists a measure equivalence coupling between them.
\end{defi}
This marked the starting point of Measured Group Theory. For a comprehensive survey of this vast subject and the links with other areas, see the references \cite{Gaboriau} and \cite{Furman}. 

Furthermore, Proposition \ref{DynCriteriaGromov} has also an asymmetric version as well, to be more precise, let's first define:
\begin{defi}
    We say that $f\colon \Ga\to \La $ is a \textbf{coarse embedding} from $\Ga$ to $\La$ if $f$ is $L$-Lipschitz for some $L>0$ and for any $R>0$ there exists $C(R)>0$ such that:
    \begin{equation*}
        \diam_\Ga(f^{-1}(B_\La(\la,R)))\leq C(R)
    \end{equation*}
    for all $\la$ in $\La$.
\end{defi}
Note first, that this definition is equivalent to the definition given in \cite{Shalom} of uniform embeddings, moreover we have the following characterization of the existence of coarse embeddings between finitely generated groups: 
\begin{prop}\cite[Theorem 2.1.2]{Shalom}
The following are equivalent:
\begin{enumerate}
    \item There exists a coarse embedding from $\Ga$ to $\La$.
    \item There exists a locally compact topological space $\Om$ with a continuous $\Ga$-action and a continuous $\La$-action that commute with each other, such that both actions are proper actions, and the $\La$-action is cocompact.
\end{enumerate}
\label{CoarseEmbedding}
\end{prop}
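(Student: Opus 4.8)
The plan is to prove the equivalence of the two statements in Proposition \ref{CoarseEmbedding} by constructing explicit objects in each direction, mirroring the structure of Gromov's symmetric argument (Proposition \ref{DynCriteriaGromov}) but keeping careful track of which properties are symmetric and which are not. The natural candidate for the coupling space $\Om$ is a space of ``orbit-matchings'' or, more concretely, the graph-of-the-embedding construction: given a coarse embedding $f\colon \Ga\to\La$, one thickens the graph $\{(\ga,f(\ga)):\ga\in\Ga\}$ inside $\Ga\times\La$ and lets $\Ga\times\La$ act by left translation on each coordinate. I will set $\Om$ to be (a suitable closure/completion of) the $\Ga\times\La$-orbit of this graph, topologized as a closed subspace of a product of discrete sets, so that $\Om$ is locally compact.

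For the implication $(1)\Rightarrow(2)$, I would proceed as follows. First, from the coarse embedding $f$ I produce a $\La$-equivariant ``coarse section'' by considering the space $\Om$ of functions (or subsets) that look like translates of the thickened graph of $f$; the two commuting actions are the left $\Ga$-action and the left $\La$-action on the respective coordinates. The key points to verify are: (a) both actions are \emph{proper}, which will follow from the Lipschitz condition of $f$ together with the properness of the word metrics on $\Ga$ and $\La$; and (b) the $\La$-action is \emph{cocompact}, which should follow because $\La$ already acts cocompactly on itself and the embedding covers $\La$ up to bounded gaps after thickening. The uniform embedding estimate $\diam_\Ga(f^{-1}(B_\La(\la,R)))\leq C(R)$ is exactly what guarantees properness of the $\Ga$-action: preimages of bounded sets stay bounded, so no sequence of group elements can escape to infinity while fixing a compact set of configurations.

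For the converse $(2)\Rightarrow(1)$, I would use the commuting proper actions with $\La$ cocompact to manufacture the map $f$. Choosing a compact fundamental domain $K$ for the $\La$-action, every point of $\Om$ lies in $\la\cdot K$ for some $\la$, and I would use a point $\om_0\in\Om$ together with the $\Ga$-action to define $f(\ga)$ as (a choice of) the $\La$-element bringing $\ga\cdot\om_0$ back into $K$. The Lipschitz bound on $f$ comes from cocompactness of the $\La$-action, and the uniform-embedding control on preimages comes from properness of the $\Ga$-action. The main obstacle, and the place requiring the most care, is the asymmetry: unlike in the quasi-isometry case one does \emph{not} assume the $\Ga$-action is cocompact, so the fundamental-domain argument only runs cleanly on the $\La$-side, and I must check that the resulting $f$ is well-defined up to bounded error (independent of the choices of $K$ and of the base point $\om_0$) and genuinely satisfies the coarse-embedding inequality rather than merely a one-sided bound. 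Handling these choices and verifying the properness-to-uniformity translation carefully is where the real work lies; the topological set-up and the equivariance are comparatively routine.
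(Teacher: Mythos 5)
First, a point of comparison: the paper does not prove this proposition at all --- it is quoted from Shalom \cite[Theorem 2.1.2]{Shalom} as motivation --- so there is no internal proof to measure you against. On its own merits, your proposal follows the standard argument (the one underlying both Gromov's criterion and Shalom's theorem): take the orbit closure of the graph of $f$ inside the space of subsets of $\Ga\times\La$ (equivalently, of $f$ in $\La^\Ga$ with the pointwise topology), let $\Ga$ and $\La$ translate the two coordinates, and in the converse direction read off a map from a compact set $K$ with $\La\cdot K=\Om$ and a base point. The architecture is right, and your matching of hypotheses to conclusions --- the preimage bound $\diam_\Ga(f^{-1}(B_\La(\la,R)))\leq C(R)$ yields properness of the $\Ga$-action, and conversely properness of the $\Ga$-action yields the preimage bound --- is the correct one.

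Two points, however, are genuinely wrong or missing. (a) Your justification of cocompactness of the $\La$-action, namely that ``the embedding covers $\La$ up to bounded gaps,'' is false: a coarse embedding need not be coarsely onto (consider $\mathbb{Z}\to\mathbb{Z}^2$, $n\mapsto (n,0)$); if it were, $f$ would be a quasi-isometry and you would be back in Proposition \ref{DynCriteriaGromov}. Cocompactness holds for a different reason: every configuration $\om$ in the orbit closure is the graph of some function $g\colon\Ga\to\La$, and translating by $g(e_\Ga)^{-1}\in\La$ moves it into $C=\{\om\in\Om : (e_\Ga,e_\La)\in\om\}$, which is a compact subset of $\Om$ (the condition of containing a given point is clopen in the product topology); no density of the image of $f$ is needed. (b) The phrase ``a suitable closure/completion \ldots so that $\Om$ is locally compact'' hides a real issue: the full orbit closure in $\{0,1\}^{\Ga\times\La}$ is compact, and a compact space admits no proper action of an infinite group, so a degenerate limit must be deleted. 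Here the Lipschitz condition is used again: along a sequence of translates of the graph, if one value escapes to infinity then all values do, so the only degenerate limit is the empty set; removing it leaves an open subset of a compact Hausdorff space, hence a locally compact $\Om$ consisting entirely of graphs of maps with the same constants as $f$. Finally, in the direction $(2)\Rightarrow(1)$, the preimage estimate needs commutation of the actions in an essential way: properness of the $\Ga$-action controls $\{g\in\Ga : g\cdot C\cap C\neq\emptyset\}$ only for a fixed compact $C$, whereas you need this uniformly over the translates $\la\cdot C$; the point is that $g\cdot(\la\cdot C)\cap(\la\cdot C)\neq\emptyset$ holds if and only if $g\cdot C\cap C\neq\emptyset$, precisely because the $\La$-action commutes with the $\Ga$-action. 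Your remark that ``the real work lies here'' points at the right step, but this identity is the specific mechanism that makes it go through.
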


We will say then that such a topological space $\Om$ will be a \textbf{Topological Subgroup coupling} from $\Ga$ to $\La$. See \cite{Sauer} for other consequences of this characterization.

It is then natural to ask ourselves if, following the spirit of Measured Group Theory, there is a measured analog of this proposition. This turns out to be the case, as seen in \cite{Romain}, in this article the authors began a study of quantitative asymmetric versions of measure equivalence by demanding less conditions on the actions over the coupling space, which they call measure subgroup couplings and measure subquotient couplings. Moreover, they also study quantitative versions of this concept, which includes, for example $L^p$-Measure Equivalence and $L^p$-measure subgroups. They also consider regular embeddings; a notion introduced in \cite{Benjamini} for graphs and studied for non-discrete spaces in \cite{Poincare}, as the measured analogue of coarse embeddings.
\begin{defi}
    We say that $f  \colon  \Ga\to \La$ is a \textbf{regular embedding} from $\Ga$ to $\La$ if $f$ is $L$-Lipschitz for some $L>0$ and there exists $m>0$ such that for any $\la$ in $\La$:
    \begin{equation*}
        \#\{ f^{-1}(\la)\}\leq m.
    \end{equation*}
\end{defi}
Note that this definition gives us uniform control of the preimages of balls in a measured sense and not in a metric sense as in the case of a coarse embedding. For example, the application $f \colon \mathbb Z\to \mathbb Z$ given by $f(n)=|n|$ is a regular embedding but not a coarse embedding.


In the Theorem 5.4 of \cite{Romain}, the authors proved the following theorem that can be considered as a measure theoretical version of Proposition \ref{CoarseEmbedding}:
\begin{teo}
    Given $\Ga$ amenable. The following are equivalent: 
    
    \begin{enumerate}
        \item There exists a regular embedding from $\Ga$ to $\La$.
        \item There exists $(\Om,\mu)$ a $L^\infty$-measure subgroup coupling from $\Ga$ to $\La$ that is $m$ to $1$ for some $m>0$.
    \end{enumerate}
    \label{RomainA}
\end{teo}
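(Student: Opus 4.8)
The plan is to prove the two implications separately, amenability of $\Ga$ being needed only for $(1)\Rightarrow(2)$; the soft implication $(2)\Rightarrow(1)$ holds in general and is the measured counterpart of the easy direction of Proposition \ref{CoarseEmbedding}. Throughout I fix word metrics on $\Ga$ and $\La$ and write $|\ga|$ for word length.

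For $(2)\Rightarrow(1)$ I would extract a genuine map by specializing the cocycle of the coupling at a generic point. Let $X_\La$ be the finite-measure fundamental domain of the (cocompact) $\La$-action and let $\al\colon\Ga\times X_\La\to\La$ be the associated cocycle, characterized by $\ga\cdot x\in \al(\ga,x)^{-1}X_\La$. The $L^\infty$-hypothesis says precisely that for every $\ga$ the function $x\mapsto|\al(\ga,x)|$ is essentially bounded, with a bound linear in $|\ga|$ coming from the cocycle identity; in particular $\sup_x|\al(s,x)|\le L$ on the generators $s$, so for almost every $x_0$ the map $f(\ga):=\al(\ga,x_0)$ is $L$-Lipschitz. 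Finally, $f^{-1}(\la)$ is in bijection with $\Ga(\la\cdot x_0)\cap X_\La$, which has at most $m$ elements since the coupling is $m$-to-$1$; hence $f$ is a regular embedding.

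For $(1)\Rightarrow(2)$, following Bader and Rosendal, I would realize the coupling as an orbit closure in a function space. After normalizing $f(e)=e$, let $Z$ be the set of $L$-Lipschitz, $m$-to-$1$ maps $\phi\colon\Ga\to\La$ with $\phi(e)=e$. Each coordinate $\phi(g)$ lies in the finite ball $B_\La(e,L|g|)$, and both the Lipschitz and the $m$-to-$1$ conditions are closed in the product topology, so $Z$ is compact and contains $f$. The twisted shift $(\ga\cdot\phi)(g)=\phi(\ga^{-1})^{-1}\phi(\ga^{-1}g)$ defines a continuous $\Ga$-action on $Z$; since $\Ga$ is amenable, there is a $\Ga$-invariant Borel probability measure $\nu$ on $Z$. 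I then form the suspension $\Om=Z\times\La$ with $\mu=\nu\otimes\nu_\La$, where $\nu_\La$ is the counting measure, letting $\La$ act by left translation on the second factor and $\Ga$ act diagonally through the cocycle $\sigma(\ga,\phi)=\phi(\ga^{-1})$. These two actions commute and preserve $\mu$, and $Z\times\{e\}$ is a finite-measure fundamental domain for $\La$. The estimate $|\sigma(\ga,\phi)|=|\phi(\ga^{-1})|\le L|\ga|$, uniform in $\phi$, gives the $L^\infty$-bound, while the $m$-to-$1$ property shared by every $\phi\in Z$ makes the coupling $m$-to-$1$.

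The main obstacle is exactly this last passage from a topological to a measured coupling. The orbit closure and the commuting actions exist with no assumption on $\Ga$, mirroring Shalom's argument, but turning them into a measure subgroup coupling requires the $\Ga$-invariant probability measure $\nu$, and it is here that amenability is indispensable: for a non-amenable $\Ga$ the compact $\Ga$-space $Z$ may carry no invariant measure. The remaining work is bookkeeping: checking that $\nu$ is compatible with the coupling axioms, that the $\La$-covolume stays finite while the $\Ga$-covolume need not, so that one genuinely obtains a measure \emph{subgroup} coupling rather than a full measure equivalence, and that the $L^\infty$ and $m$-to-$1$ normalizations survive the suspension.
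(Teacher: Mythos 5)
Your two implications follow the same strategy this paper uses for its locally compact generalization (Theorem \ref{teoprincipal}), which is the natural point of comparison since the paper only cites Theorem \ref{RomainA} from \cite{Romain} (whose own proof, via well-ordering $\Ga$, is genuinely different from yours): for $(2)\Rightarrow(1)$ you specialize the cocycle at a generic point of the finite-measure fundamental domain, and for $(1)\Rightarrow(2)$ you run the Bader--Rosendal scheme, replacing the paper's space $\Delta(G)^H$ by the compact set $Z\subset\La^\Ga$ of pointed $L$-Lipschitz $m$-to-$1$ maps and suspending to $\Om=Z\times\La$ with $\nu\otimes\mathrm{counting}$. These discrete simplifications are legitimate, and the formulas themselves check out: the twisted shift preserves $Z$, the skew action $\ga\cdot(\phi,\la)=(\ga\cdot\phi,\,\la\,\phi(\ga^{-1}))$ commutes with left translation and preserves $\mu$, $Z\times\{e\}$ is a finite-measure fundamental domain for $\La$, and the induced cocycle at $(\phi,e)$ is essentially $\phi$ itself, giving the $L^\infty$ and $m$-to-$1$ bounds.

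The genuine gap is in what you dismiss as ``bookkeeping.'' A measure subgroup coupling requires both actions to be smooth (to admit Borel fundamental domains) and free, and you verify neither for the $\Ga$-action on $Z\times\La$. This is not routine: it is exactly what the paper isolates as the main difficulty (Claim \ref{Important}, proved via Effros' theorem, Lemma \ref{Effros}, together with Proposition \ref{regularitylemma}), and it is the second, decisive place where the $m$-to-$1$ hypothesis must enter --- in your write-up that hypothesis is used only to make $Z$ compact. Concretely, identify $Z\times\La$ with the space $W$ of unpointed $L$-Lipschitz $m$-to-$1$ maps $\psi\colon\Ga\to\La$ via $(\phi,\la)\mapsto\la\phi$; your $\Ga$-action becomes the plain shift $(\ga\cdot\psi)(g)=\psi(\ga^{-1}g)$. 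If $\ga_n\cdot\psi\to\psi'$ pointwise, then $\psi(\ga_n^{-1})=\psi'(e)$ for all large $n$, so $\ga_n$ eventually lies in the finite set $\left(\psi^{-1}(\psi'(e))\right)^{-1}$ of cardinality at most $m$; hence every $\Ga$-orbit is discrete and closed, and Lemma \ref{Effros} gives smoothness. Without the $m$-to-$1$ condition on the elements of $Z$ the shift action on an orbit closure in $\La^\Ga$ is in general \emph{not} smooth, so amenability would hand you an invariant measure but no fundamental domain for $\Ga$, hence no coupling at all; this is why the step cannot be waved away. Freeness also genuinely fails: if $\Ga$ has torsion, a stabilizer $\{\ga:\phi(\ga^{-1})=e,\ \ga\cdot\phi=\phi\}$ can be a nontrivial finite group (e.g.\ $\Ga=\mathbb{Z}/2\times\mathbb{Z}$, $\La=\mathbb{Z}$, $\phi(\epsilon,n)=|n|$), and must be repaired as the paper does, by multiplying the coupling by a compact free $(\Ga\times\La)$-space (Lemma \ref{free}). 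Finally, a smaller point, which the paper's own write-up of Theorem \ref{teoprincipal} shares: the linear bound $|\al(\ga,x_0)|\le L|\ga|$ does not by itself yield the Lipschitz property of $\ga\mapsto\al(\ga,x_0)$; one must use the cocycle identity to write $\al(\ga_2,x_0)\al(\ga_1,x_0)^{-1}$ as a cocycle of $\ga_2\ga_1^{-1}$ evaluated at the \emph{translated} point $\ga_1\cdot x_0$, and then use that the generator bounds hold at almost every point simultaneously for the countably many translates --- this is where ``almost every $x_0$'' really earns its keep.
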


On the other hand, Bader and Rosendal proved in \cite{Rosendal}, the analog of Proposition \ref{DynCriteriaGromov} for locally compact compactly generated groups. Using their ideas as inspiration, we extend then the Theorem \ref{RomainA} to the locally compact setting, thus obtaining the first main contribution of this article:
\begin{theoremletter}[Theorem \ref{teoprincipal}]
    Given $H,G$ two locally compact compactly generated unimodular groups, with $H$ being amenable, we have the following equivalence:
    \begin{enumerate}
        \item There exists a regular embedding from $H$ to $G$.
        \item There exists a $L^\infty$-measure subgroup coupling from $H$ to $G$ that is coarsely $m$ to $1$ for some $m>0$.
    \end{enumerate}
    \label{A}
\end{theoremletter}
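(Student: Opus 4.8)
The statement is the measured, quantitative analogue of Gromov's dynamical criterion for coarse embeddings (Proposition \ref{CoarseEmbedding}) and upgrades Theorem \ref{RomainA} from finitely generated to locally compact compactly generated unimodular groups. Fix once and for all symmetric compact generating sets of $H$ and $G$, with the associated word lengths $|\cdot|_H$, $|\cdot|_G$ and (left) Haar measures. Unimodularity of $G$ and amenability of $H$ will enter only at isolated, clearly marked points of the argument: unimodularity to guarantee that right translations also preserve Haar measure, and amenability to produce an invariant probability measure on a suitable transversal. The plan is to treat the two implications separately.

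For $(1)\Rightarrow(2)$ I would first build a topological subgroup coupling out of the regular embedding $f\colon H\to G$, following the space-of-maps construction of Bader and Rosendal. Concretely, let $\Om$ be the closure, inside a suitable space of measurable maps $H\to G$, of the set of all maps of the form $h\mapsto g\,f(h_0 h)$ with $g\in G$, $h_0\in H$, intersected with those maps lying at uniformly bounded $G$-distance from such translates. Let $H$ act by precomposition, $(h\cdot\phi)(x)=\phi(xh)$, and $G$ by postcomposition, $(g\cdot\phi)(x)=g\,\phi(x)$; these two actions commute. The Lipschitz bound on $f$ forces the defining constraints to be locally compact, so that the $G$-action is proper and the quotient $Y:=\Om/G$ is compact; this is precisely where the Bader--Rosendal machinery for non-discrete groups is needed. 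Identifying $\Om\cong G\times Y$ as a $G$-space, I would then use amenability of $H$ to produce an $H$-invariant probability measure $\nu$ on the compact space $Y$, and set $\mu=\mathrm{Haar}_G\otimes\nu$. Here unimodularity of $G$ is exactly what makes the cocycle twist of the $H$-action preserve $\mathrm{Haar}_G$, so that $\mu$ is invariant under both actions and $(\Om,\mu)$ is a measure subgroup coupling. The $L$-Lipschitz bound on $f$ translates into an essential bound $\sup_{y}|\alpha(h,y)|_G\le L\,|h|_H$ on the resulting $G$-valued cocycle $\alpha$, i.e.\ into $L^\infty$-integrability, and the bound $m$ on the fibers of $f$ yields that the coupling is coarsely $m$-to-$1$.

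For the converse $(2)\Rightarrow(1)$ I would run the construction backwards and more directly. Given an $L^\infty$-measure subgroup coupling $(\Om,\mu)$ that is coarsely $m$-to-$1$, choose a finite-measure Borel fundamental domain $X$ for the $G$-action and normalize $\mu|_X$ to a probability measure. Recording the return of $h\cdot x$ to $X$ produces a measurable cocycle $\alpha\colon H\times X\to G$ with $h\cdot x=\alpha(h,x)^{-1}\cdot(h* x)$, and the $L^\infty$ hypothesis says precisely that $x\mapsto|\alpha(h,x)|_G$ is essentially bounded by $L\,|h|_H$. For a generic base point $x_0\in X$ the assignment $f:=\alpha(\cdot,x_0)\colon H\to G$ is then $L$-Lipschitz (the cocycle identity propagates the a.e.\ bound on the compact generating set to all of $H$), and I would verify that coarse $m$-to-$1$ forces $\#f^{-1}(g)\le m$ for every $g\in G$: the number of $h$ with $f(h)$ near a given $g$ is governed by the measure of the corresponding return region in $X$, and the measured multiplicity bound converts, using the finiteness of $\mu|_X$ and unimodularity, into an honest cardinality bound on the fibers. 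Thus $f$ is a regular embedding.

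I expect the main obstacle to be the direction $(1)\Rightarrow(2)$, and within it the interaction of two steps: arranging the space of maps $\Om$ so that the $G$-action is simultaneously proper and cocompact in the genuinely non-discrete setting (the Bader--Rosendal input), and then extracting a joint-invariant measure from amenability of $H$ while \emph{preserving} the quantitative $L^\infty$ and coarsely-$m$-to-$1$ bounds. The non-discreteness is also what forces the replacement of the exact ``$m$-to-$1$'' condition of Theorem \ref{RomainA} by its coarse counterpart, since individual points of $G$ have measure zero; keeping all selections Borel and all estimates uniform throughout these manipulations, with compact generation and unimodularity as the controlling hypotheses, is where the technical weight of the argument lies.
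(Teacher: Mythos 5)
Your outline for $(1)\Rightarrow(2)$ follows the same route as the paper (a Bader--Rosendal space of maps, properness and cocompactness of the $G$-action, amenability of $H$ to produce an invariant probability on $\Om/G$, then $\mu=\la_G\otimes\nu_G$), but it stops exactly where the paper's real work begins. A measure subgroup coupling requires \emph{both} actions to be smooth and free, with fundamental domains and product decompositions $i_G\colon G\times\df_G\to\Om$ \emph{and} $i_H\colon H\times\df_H\to\Om$; your identification $\Om\cong G\times Y$ only handles the $G$-side. The $H$-action on $\Om$ is not proper, so its smoothness cannot be deduced from properness (Lemma \ref{properaction} does not apply); the paper's Claim \ref{Important} -- proved via Effros's theorem (Lemma \ref{Effros}) combined with the uniform regularity of every limit map $\xi\in\Om_0$ (Proposition \ref{regularitylemma}) -- is precisely this step, and it is what the paper explicitly identifies as the main difficulty of the whole theorem. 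You also never arrange freeness of the two actions (the paper multiplies $\Om_0$ by a compact free $G\times H$-space furnished by Lemma \ref{free}), and you never produce the measure $\nu_H$ on $\df_H$ making $i_H$ measure-preserving (Lemma \ref{action2}). Finally, your ambient space -- closures of translates $h\mapsto g\,f(h_0h)$ inside a space of measurable maps $H\to G$ -- does not work as stated: such maps are discontinuous and their translates have no useful closure or local compactness in any natural topology. The actual Bader--Rosendal device, which the paper follows, replaces $f$ by a Lipschitz partition-of-unity map $\eta\colon H\to\Delta(G)\subset L^1(G,\la_G)$, and it is this simplex-valued smoothing that makes $\Om_0$ locally compact, both actions continuous, and the regularity argument for smoothness possible.

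For $(2)\Rightarrow(1)$ you aim at the wrong statement. In the locally compact setting a regular embedding is \emph{not} defined by a pointwise fiber bound $\#f^{-1}(g)\le m$ -- that is the finitely generated definition -- but by the covering condition of \cite{Poincare}: preimages of $r$-balls can be covered by at most $N$ balls of radius $s$. A pointwise cardinality bound is in general unattainable for non-discrete $H$ (fibers of such a map can be infinite, even of positive measure), so your plan to ``convert the measured multiplicity bound into an honest cardinality bound on the fibers'' would fail, and no such conversion is needed. In the paper this direction is nearly immediate: ``coarsely $m$-to-$1$'' is \emph{defined} as the statement that each map $c_x(h)=c_G(h^{-1},x)$ satisfies the ball-covering condition, so the only thing left to prove is the coarse Lipschitz estimate, which follows from the $L^\infty$ bound on generators together with the cocycle identity -- the one part of this direction that you do sketch correctly.
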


Note that the approach given in \cite{Romain} doesn't translate well to the locally compact setting, since they use that $\Ga$ can be well ordered. However, in the locally compact setting this is not possible to do in a Borel way. It is also not clear how to obtain an injective map out of a regular embedding between locally compact groups. This is where the ideas coming from \cite{Rosendal} come into play, note however that applying directly their ideas would only gives us a topological subgroup coupling out of a coarse embedding between locally compact groups. So in order to obtain results in the measured setting, that is for regular embeddings, some extra work is needed. The main difficulty will be the proof of the smoothness of the $H$-action(Claim \ref{Important}) that will use Lemma \ref{Effros} and Proposition \ref{regularitylemma} as its main ingredients. Note that such action is not proper, so the proof in Lemma \ref{properaction} that proper actions are smooth(see \cite[Example 6.11]{Koivisto} as well) can't be used.

\subsection{Monotonicity of the isoperimetric profile}

In the discrete setting, the isoperimetric profile of a group is an important invariant that quantifies the amenability of the group. To define this concept more precisely, let \( p \geq 1 \) be fixed, and let \( \Gamma \) be a finitely generated group with a fixed generating set \( S_\Gamma \). For any function \( f \in \ell^p(\Gamma) \), we define the \( L^p \)-norm of the left-gradient of  $f$ as:
\[
||\nabla^l_\Gamma f||_p^p = \sum_{s \in S_\Gamma}  \sum_{\ga\in \Ga} |f(sg)-f(g)|^p
\]
Given a function $f\colon \Ga\to \mathbb R$, its support is the set $\supp f=\{g\in\Gamma\colon f(g)\neq 0\}$.
We then define the \( \ell^p \)-isoperimetric profile of \( \Gamma \) as the non-decreasing function:

\[
j_{p,\Gamma}(n) = \sup_{|\text{supp}(f)| \leq n} \frac{||f||_p}{||\nabla_\Gamma f||_p}
\]

Here, we are interested in the asymptotic behavior of this function as \( n \to \infty \). To express this more rigorously, for two increasing real-valued functions \( f \) and \( g \), we say that \( f \) is asymptotically less than \( g \) and write \( f \preccurlyeq g \) if there exists a positive constant \( C \) such that \( f(n) = O(g(Cn)) \) as \( n \to \infty \). If both \( f \preccurlyeq g \) and \( g \preccurlyeq f \), we say that \( f \) and \( g \) are asymptotically equivalent and write \( f \approx g \). The asymptotic behavior of a function is its equivalence class modulo \( \approx \).

In the specific case of $p=1$, the $\ell^1$ isoperimetric profile 
is simply called the isoperimetric profile because of the following geometric interpretation \cite{Coulhon}:
\[j_{1,\Gamma}(n)\approx \sup_{|A|\leq n} \frac{|A|}{|\partial A|},\]
where $\partial A=S_\Gamma A\bigtriangleup A$.
Certain authors prefer to work with the  F\o lner 
function, defined for all $k\geq 1$ as follows
\[\text{F\o l}_\Gamma(k)= \inf\left\{|A|\colon  \frac{|\partial A|}{|A|}\leq 
1/k\right\}.\]
Note that the isoperimetric profile and the F\o lner 
function are generalized inverses of one another.


It is to be noted that the asymptotic behavior of \( j_{p,\Gamma}(n) \)  does not depend on the choice of the generating set \( S_\Gamma \). Additionally, \( j_{p,\Gamma} \) is unbounded if and only if \( \Gamma \) is amenable. So the isoperimetric profile can be interpreted as a measurement of amenability: the faster it tends to infinity, the "more amenable" the group is.
This is illustrated by the following examples. For all $p\geq 1$ we have:

\begin{itemize}
	\item $j_{p,\Gamma}(n)\approx n^{1/d}$ for a group of polynomial growth of degree $d$ (e.g.\ for $\Gamma=\mathbb Z^d$) (see  \cite{Coulhon});
	\item $j_{p,\Gamma}(n)\preccurlyeq \log n$ for all amenable groups with exponential growth \cite{coulhonIsoperimetriePourGroupes1993};
	\item $j_{p,\Gamma}(n)\approx \log n$ for a wide class of solvable groups with exponential growth including those that are polycyclic, or the so-called ``lamplighter'' groups, i.e.\ group of the form $F\wr \mathbb Z$, where $F$ is a finite non-trivial group \cite{coulhonGeometricApproachOndiagonal2001,pittetFolnerSequencesPolycyclic1995,pittetIsoperimetricProfileHomogeneous2000,IsoProfilLC};
	\item $j_{1,\Gamma}(n)\approx (\log n)^{1/d}$ for a group of the form $F\wr \Sigma$, where $F$ is a non-trivial finite group and $\Sigma$ has polynomial growth of degree $d$ \cite{erschlerIsoperimetricProfilesFinitely2003}.
\end{itemize}



Furthermore, in \cite[Theorem 4.3]{Romain} the authors proved the following monotonicity result:
\begin{teo}
\label{RomainB}
If there exists a $L^p$-measure subgroup coupling from $\Ga$ to $\La$ that is $m$-to-$1$, we then have that:
\begin{equation*}
    j_{p,\La}\preccurlyeq j_{p,\Ga} 
\end{equation*}
\end{teo}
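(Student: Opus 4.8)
The plan is to run a transference argument: starting from a near-optimal test function on $\La$, use the coupling to manufacture a test function on $\Ga$ whose $\ell^p$-data is comparable and whose $\Ga$-support has grown by at most a bounded factor, so that the defining supremum for $j_{p,\Ga}$ at scale $Cn$ dominates the one for $j_{p,\La}$ at scale $n$.

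First I would fix the coupling data. Normalize the finite-measure fundamental domain $X_\La$ of the $\La$-action so that $\mu(X_\La)=1$, let $X_\Ga$ be a fundamental domain for the (possibly infinite-measure) $\Ga$-action, and record the measurable cocycle $\al\colon\Ga\times X_\La\to\La$ attached to the induced $\Ga$-action on $X_\La$, so that $s^{-1}x\in\al(s^{-1},x)\,X_\La$ for $x\in X_\La$. Given $f\in\ell^p(\La)$ with $|\supp f|\le n$ realizing (up to a factor $2$) the supremum defining $j_{p,\La}(n)$, I would lift it to $\Om$ by setting $\hat f(\la x)=f(\la)$ for $\la\in\La$, $x\in X_\La$; this is well defined because $\{\la X_\La\}_\la$ partitions $\Om$. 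Two identities are then immediate: $\|\hat f\|_{L^p(\Om)}^p=\sum_\la|f(\la)|^p\mu(X_\La)=\|f\|_p^p$, and, decomposing $\Om\cong\Ga\times X_\Ga$ along the $\Ga$-cells, $\hat f$ becomes an $L^p(X_\Ga)$-valued function on $\Ga$ on which the $\Ga$-action is pure translation in the $\Ga$-coordinate.

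The technical heart is the gradient comparison. For a generator $s\in S_\Ga$ and $\om=\la x$ I would use that the two actions commute to write $s^{-1}\om=\la\,\al(s^{-1},x)\cdot\beta$ with $\beta\in X_\La$, whence $\hat f(s^{-1}\om)-\hat f(\om)=f(\la\,\al(s^{-1},x))-f(\la)$. Expressing $w=\al(s^{-1},x)$ as a word $t_1\cdots t_\ell$ of length $\ell=|\al(s^{-1},x)|_{S_\La}$, telescoping and the power-mean inequality give $|f(\la w)-f(\la)|^p\le \ell^{p-1}\sum_{j}|f(\la t_1\cdots t_j)-f(\la t_1\cdots t_{j-1})|^p$; summing over $\la$ and re-indexing each inner sum by translation invariance (the left and right $\ell^p$-gradients being comparable for the symmetric set $S_\La$) bounds it by $\ell^p\|\nabla_\La f\|_p^p$. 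Integrating over $x\in X_\La$ and summing over the finite set $S_\Ga$ then yields $\|\nabla_\Ga\hat f\|_{L^p(\Om)}^p\le C_p\,\|\nabla_\La f\|_p^p$ with $C_p=\sum_{s\in S_\Ga}\int_{X_\La}|\al(s^{-1},x)|_{S_\La}^p\,d\mu(x)$. This is exactly where the hypothesis enters: $C_p<\infty$ is the $L^p$-integrability of the coupling.

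It remains to control the support and to pass from vector-valued to scalar profiles. The support set $W=\bigsqcup_{\la\in\supp f}\la X_\La$ has measure $n$, and the $\Ga$-support of $\hat f$ is the set of $\ga$ with $\ga X_\Ga\cap W\ne\emptyset$; since the $m$-to-$1$ hypothesis bounds by $m$ the number of $\Ga$-cells meeting a single $\La$-cell, one gets $|\supp_\Ga\hat f|\le m\,|\supp f|\le mn$. Finally, for each slice $z\in X_\Ga$ the scalar function $\ga\mapsto\hat f(\ga,z)$ has $\Ga$-support contained in $\supp_\Ga\hat f$, so $\|\hat f(\cdot,z)\|_p^p\le j_{p,\Ga}(mn)^p\,\|\nabla_\Ga\hat f(\cdot,z)\|_p^p$; integrating in $z$ and applying Fubini upgrades this to $\|\hat f\|_{L^p(\Om)}\le j_{p,\Ga}(mn)\,\|\nabla_\Ga\hat f\|_{L^p(\Om)}$. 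Combining the three estimates gives $\|f\|_p/\|\nabla_\La f\|_p\le C_p^{1/p}\,j_{p,\Ga}(mn)$, and taking the supremum over admissible $f$ yields $j_{p,\La}(n)\le C_p^{1/p}\,j_{p,\Ga}(mn)$, i.e. $j_{p,\La}\preccurlyeq j_{p,\Ga}$. I expect the main obstacle to be the gradient estimate — concretely, verifying that the cocycle is genuinely $L^p$-integrable on generators and handling the measurability of the fundamental-domain identifications — while the place where the hypothesis is most delicately exploited is the $m$-to-$1$ support count.
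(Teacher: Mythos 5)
Your overall strategy---lift $f$ to $\hat f$ on $\Om$, control the $\Ga$-gradient through the $L^p$-integrability of the cocycle (your telescoping estimate is exactly Lemma \ref{ineq}, and your constant $C_p$ is the constant of Lemma \ref{Gradiente}), then slice along $\Om\cong\Ga\times X_\Ga$---is the same as the one used here for the locally compact analogue (Theorem \ref{teoLp}); your one structural deviation, integrating the slice-wise isoperimetric inequality instead of extracting a single good slice by a positive-measure pigeonhole, is harmless and if anything cleaner. The genuine gap is in the support-control step, precisely the step you flag as the delicate use of the hypothesis. The $m$-to-$1$ condition is, by definition, a property of the cocycle maps: for (almost) every $x\in X_\La$ the map $\ga\mapsto\al(\ga,x)$ has all point-preimages of cardinality at most $m$ (the coarse version in this paper asks that preimages of $3$-balls be covered by at most $m$ balls of radius $s$). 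It is \emph{not} the statement that each $\La$-cell $\la X_\La$ meets at most $m$ of the $\Ga$-cells $\ga X_\Ga$: that statement depends on the choice of the fundamental domain $X_\Ga$, which the cocycle condition does not see, and it can simply fail. For instance, take $\Ga=\La=\mathbb{Z}$ acting identically on $\Om=\mathbb{Z}\times[0,1)$ with $X_\La=\{0\}\times[0,1)$; the cocycle is $\al(n,x)=n$, so the coupling is $1$-to-$1$, yet for the fundamental domain $X_\Ga=\{(\phi(t),t):t\in[0,1)\}$ with $\phi$ unbounded, infinitely many $\Ga$-cells meet the single $\La$-cell $X_\La$. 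Hence your bound $|\supp_\Ga\hat f|\le mn$ on the \emph{global} $\Ga$-support is unjustified (and false in general), and with it your stated justification of the slice inequality collapses.

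The good news is that your argument only ever needs a bound on the support of each individual slice, and that bound does follow from the actual hypothesis---this is exactly how it is used here (see Lemma \ref{mto1} and the discussion of $\supp \tilde f_x$ preceding Theorem \ref{teoLp}). Write $z\in X_\Ga$ as $z=\la_0 * x_0$ with $\la_0\in\La$ and $x_0\in X_\La$; since the actions commute, $\hat f(\ga * z)=f\bigl(\la_0\,\al(\ga,x_0)\bigr)$, so
\begin{equation*}
\supp \hat f(\cdot,z)=\left\{\ga\in\Ga : \al(\ga,x_0)\in \la_0^{-1}\supp f\right\},
\end{equation*}
which has at most $m\,|\supp f|\le mn$ elements by the $m$-to-$1$ condition applied at $x_0$ (note that one needs it for the translate $\la_0^{-1}\supp f$, which is fine since the condition bounds preimages of \emph{all} singletons). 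Each slice thus has support of size at most $mn$, even though the union of these supports over $z\in X_\Ga$ may well be infinite. Substituting this per-slice bound for your global one, the remainder of your proof (Fubini in $z$, then combining the three estimates) is correct and yields $j_{p,\La}(n)\le C_p^{1/p}\, j_{p,\Ga}(mn)$ as claimed.
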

As well as its sublinear version(cf. \cite[Theorem 4.4]{Romain}):
\begin{teo}
\label{RomainC}
Given $\varphi\colon (0,+\infty)\to (0,+\infty)$ such that $\varphi$ and $t\mapsto \frac{t}{\varphi(t)}$ are non-decreasing. If there exists a $\varphi$-measure subgroup coupling from $\Ga$ to $\La$ that is $m$-to-$1$, we then have that: 
\begin{equation*}
    \varphi \circ j_{1,\La} \preccurlyeq j_{1,\Ga}
\end{equation*}
\end{teo}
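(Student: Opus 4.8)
The plan is to prove the estimate by \emph{transferring test functions from $\La$ to $\Ga$} through the coupling and controlling three quantities at once: the size of the support, the $L^1$-norm, and the $L^1$-norm of the gradient. Fix a $\varphi$-measure subgroup coupling $(\Om,\mu)$ from $\Ga$ to $\La$ that is $m$-to-$1$, together with fundamental domains $X_\Ga$ for the $\Ga$-action and $Y_\La$ for the $\La$-action (the latter of finite measure), and let $c\colon \Ga\times X_\Ga\to\La$ be the associated cocycle encoding the map $\Ga\to\La$, whose $\varphi$-integrability reads $\int_{X_\Ga}\varphi(\ell(s,x))\,d\mu(x)<\infty$ for each generator $s\in S_\Ga$, where $\ell(s,x)$ denotes the word length of $c(s,x)$ in $\La$. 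Given a finitely supported $f\colon\La\to\R$, which we may take $\geq 0$, I would define $\hat f\colon\Ga\to\R$ by averaging $f$ along the cocycle, $\hat f(\ga)=\int_{X_\Ga} f(c(\ga,x))\,d\mu(x)$.

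First I would record the two \emph{soft} estimates. Since the $\La$-action is measure preserving and $\mu(Y_\La)<\infty$, unfolding the definition gives $\|\hat f\|_1=\mu(Y_\La)\,\|f\|_1$, so $\|\hat f\|_1\gtrsim\|f\|_1$. For the support, one has $\supp\hat f\subseteq\{\ga:\ c(\ga,x)\in\supp f\text{ for some }x\in X_\Ga\}$, and the $m$-to-$1$ hypothesis together with measure preservation bounds the number of such $\ga$ by $m$ times the number of points hit in $\La$, giving $|\supp\hat f|\le m\,|\supp f|$.

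The heart of the argument, and the step I expect to be the main obstacle, is the gradient estimate. For a generator $s$, the cocycle identity shows that $c(s\ga,x)$ and $c(\ga,x)$ differ in $\La$ by an element of length $\ell(s,\ga x)$, so $\hat f(s\ga)-\hat f(\ga)$ telescopes along a geodesic in $\La$ of that length, and $|\hat f(s\ga)-\hat f(\ga)|$ is controlled by a sum of $\ell(s,\ga x)$ elementary gradient terms of $f$. Summing over $\ga$ and $s$ and folding back by measure preservation would directly yield $\|\nabla_\Ga\hat f\|_1\lesssim\big(\int\ell\,d\mu\big)\,\|\nabla_\La f\|_1$ in the $L^1$ case $\varphi=\mathrm{id}$; but under only $\varphi$-integrability the mean length $\int\ell\,d\mu$ may diverge, and a truncation is forced. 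Setting $r=\|f\|_1/\|\nabla_\La f\|_1$ (the ratio $f$ realizes) and truncating cocycle paths at threshold $T\approx r$, I would invoke the elementary inequality
\[
\min(\ell,T)\,\varphi(T)\ \le\ T\,\varphi(\ell),
\]
which is exactly where both hypotheses on $\varphi$ enter (monotonicity of $\varphi$ handles $\ell>T$, while monotonicity of $t\mapsto t/\varphi(t)$ handles $\ell\le T$). This turns $\varphi$-integrability into $\int\min(\ell,T)\,d\mu\lesssim \tfrac{T}{\varphi(T)}$, and the excess mass carried by the long paths $\ell>T$ must be absorbed using that a near-optimal $f$ is already spread at scale $\approx r=T$. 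The outcome should be the bound
\[
\|\nabla_\Ga\hat f\|_1\ \lesssim\ \frac{r}{\varphi(r)}\,\|\nabla_\La f\|_1 .
\]

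Finally I would assemble the three estimates. Since $t/\varphi(t)$ is non-decreasing, the gradient bound rearranges to $\|\hat f\|_1\gtrsim\varphi(r)\,\|\nabla_\Ga\hat f\|_1$, so $\hat f$ realizes on $\Ga$ an isoperimetric ratio of order $\varphi(r)=\varphi\!\big(\|f\|_1/\|\nabla_\La f\|_1\big)$ with support of size at most $m\,|\supp f|$. Choosing $f$ to nearly realize $j_{1,\La}(n)$ and feeding $\hat f$ into the definition of $j_{1,\Ga}$ at scale $mn$ gives $\varphi\big(j_{1,\La}(n)\big)\lesssim j_{1,\Ga}(mn)$, which is precisely $\varphi\circ j_{1,\La}\preccurlyeq j_{1,\Ga}$. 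The delicate points to make rigorous are the precise telescoping along cocycle geodesics together with the Fubini/measurability bookkeeping it requires, the absorption of the contribution of long cocycle paths, and the verification that the truncation threshold can be taken equal to the realized ratio $r$ uniformly over the chosen test functions.
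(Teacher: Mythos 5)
Your transfer by averaging, $\hat f(\ga)=\int \tilde f_x(\ga)\,d\mu(x)$ (reading your formula charitably, with $\tilde f_x(\ga)$ the value of $f$ at the $\La$-coordinate of $\ga\ast x$), is where the argument genuinely breaks. Of your two ``soft'' estimates, the norm identity $\lVert\hat f\rVert_1=\mu(Y_\La)\lVert f\rVert_1$ is indeed correct for $f\geq 0$ (it is the unfolding of the lift $\tilde f$ computed in the two product decompositions of $\Om$), and the gradient bound passes to the average by convexity; but the support bound $|\supp \hat f|\leq m\,|\supp f|$ is false. The $m$-to-$1$ hypothesis is a statement about each \emph{individual} fiber map $c_x\colon\Ga\to\La$: for each fixed $x$ one has $\#\{\ga : c_x(\ga)\in\supp f\}\leq m\,|\supp f|$. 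The support of the average, however, is the \emph{union} of the fiber supports over $x$ ranging in a positive-measure (typically uncountable) set, and nothing bounds that union. Concretely, take the measure equivalence coupling of $\Ga=\La=\mathbb Z$ associated to an orbit equivalence of two ergodic p.m.p.\ $\mathbb Z$-actions with the same orbits: it is $1$-to-$1$, yet for $f=\delta_{\la_0}$ one gets $\hat f(\ga)=\mu(\{x : \alpha(\ga,x)=\la_0\})$ with $\alpha$ the orbit-equivalence cocycle, whose support is the essential range of $x\mapsto\alpha(\cdot,x)^{-1}(\la_0)$ and can be all of $\mathbb Z$. So $\hat f$ cannot be fed into the definition of $j_{1,\Ga}$ at scale $m\,|\supp f|$, and the final assembly collapses. (There is also a bookkeeping slip: the cocycle and the $\varphi$-integrability live on $\Ga\times Y_\La$, the finite-measure fundamental domain of the $\La$-action, not on $\Ga\times X_\Ga$.)

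The paper's proof (Theorem \ref{teosublinear} in the locally compact setting, via Lemmas \ref{ineq2}, \ref{GradientSublinear} and \ref{mto1}) avoids exactly this trap by never averaging: it keeps the lift $\tilde f$ on $\Om$ and its fiber restrictions $\tilde f_y$, each of which \emph{does} satisfy the support bound coming from the $m$-to-$1$ condition, and then converts the integrated gradient estimate $\lVert\nabla_\Ga \tilde f\rVert_1\leq C\,\lVert f\rVert_1/\varphi(\lVert f\rVert_1)$ into the existence of a positive-measure set of fibers $y$ on which $\lVert\nabla_\Ga \tilde f_y\rVert_1< 2C\,\lVert\tilde f_y\rVert_1/\varphi(\lVert f\rVert_1)$, by a Chebyshev/Markov argument (if the ratio were bad on almost every fiber, integration over the fundamental domain would contradict the global bound). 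Your gradient analysis itself is sound and is essentially the paper's Lemma \ref{ineq2}: the inequality $\min(\ell,T)\varphi(T)\leq T\varphi(\ell)$ you isolate is precisely how the paper splits $\lVert f-g\ast f\rVert_1\leq\min\bigl(|g|\,\lVert\nabla f\rVert_1,\,2\lVert f\rVert_1\bigr)$ using the monotonicity of $\varphi$ and of $t/\varphi(t)$, so no separate truncation or ``absorption of long paths'' is actually needed. The repair is therefore structural rather than technical: replace the average over the fundamental domain by a pigeonhole over fibers, after which the rest of your outline goes through.
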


On the other hand, in \cite{IsoProfilLC}, the $L^p$-isoperimetric profile of unimodular amenable locally compact second countable compactly generated groups is introduced. It is natural then to ask about an analogue of Theorem \ref{RomainB} and Theorem \ref{RomainC} in the locally compact setting. We prove the desired analog statements, thus obtaining the second main contribution of the article. Let's consider in all this section $H,G$ two unimodular locally compact compactly generated groups, we then have that:
\begin{theoremletter}[Theorem \ref{teoLp}]
    \label{B}
Suppose there exists a $L^p$-measure subgroup coupling from $H$ to $G$, that is coarsely $m$-to-$1$, we then have that:
\begin{equation*}
    j_{p,G} \preccurlyeq j_{p,H}
\end{equation*}
\end{theoremletter}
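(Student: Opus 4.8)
The plan is to follow the transference strategy of Delabie--Koivisto--Le Ma\^itre--Tessera behind Theorem \ref{RomainB}, adapting each step to the locally compact unimodular setting in which sums over the group become Haar integrals. Let $(\Om,\mu)$ be the given $L^p$-measure subgroup coupling from $H$ to $G$. I would first isolate the structural data it provides: a fundamental domain $X_G$ of \emph{finite} measure for the $G$-action (the ``cocompact'' direction), a fundamental domain $X_H$ for the proper, smooth $H$-action, and the associated $G$-valued cocycle $c\colon H\times X_G\to G$ determined by $s\cdot x=c(s,x)\cdot\sigma(s,x)$ with $\sigma(s,x)\in X_G$. The hypothesis that the coupling is $L^p$ is precisely that $\int_{X_G}|c(s,x)|_{S_G}^p\,d\mu(x)<\infty$ for $s$ ranging over a compact generating set $S_H$, while the coarsely $m$-to-$1$ condition will be used as a pushforward bound for the orbit maps below. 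The goal is: starting from a compactly supported $\phi\colon G\to\R$ that nearly realizes $j_{p,G}(n)$, to manufacture a test function on $H$ with comparable ratio $\|\cdot\|_p/\|\nabla_H\cdot\|_p$ and support of Haar measure $O(n)$; this yields $j_{p,G}(n)\le C\,j_{p,H}(Cn)$, i.e. $j_{p,G}\preccurlyeq j_{p,H}$.

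The induction itself passes through the coupling. Fix $\phi$ with $\mu_G(\supp\phi)\le n$ and $\|\phi\|_p/\|\nabla_G\phi\|_p$ within a factor $2$ of $j_{p,G}(n)$. Using the finite-covolume $G$-fundamental domain, lift $\phi$ to $\tilde\phi\colon\Om\to\R$ by $\tilde\phi(\om)=\phi(g)$ where $\om=g\cdot x$, $x\in X_G$; a change of variables gives the clean identity $\|\tilde\phi\|_{L^p(\Om)}^p=\mu(X_G)\,\|\phi\|_p^p$. Now disintegrate $\Om$ along the $H$-action: for $\xi\in X_H$ set $\psi_\xi(h)=\tilde\phi(h\cdot\xi)$. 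Since $(h,\xi)\mapsto h\cdot\xi$ identifies $H\times X_H$ with $\Om$ measurably and measure-preservingly, I obtain the two master relations
\begin{equation*}
\int_{X_H}\|\psi_\xi\|_p^p\,d\mu(\xi)=\mu(X_G)\,\|\phi\|_p^p,\qquad
\int_{X_H}\|\nabla_H\psi_\xi\|_p^p\,d\mu(\xi)=\int_{S_H}\int_\Om|\tilde\phi(s\cdot\om)-\tilde\phi(\om)|^p\,d\mu(\om)\,ds,
\end{equation*}
the second merely unfolding the definition of the $H$-gradient fiberwise.

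The heart of the matter is bounding the right-hand side of the second relation by $\|\nabla_G\phi\|_p^p$, and this is where I expect the principal difficulty. Commutation of the two actions is what makes it possible: writing $\om=g\cdot x$ gives $s\cdot\om=g\cdot(s\cdot x)=(g\,c(s,x))\cdot\sigma(s,x)$, hence $\tilde\phi(s\cdot\om)-\tilde\phi(\om)=\phi(g\,c(s,x))-\phi(g)$. Expressing $c(s,x)$ as a $G$-word $t_1\cdots t_\ell$ of length $\ell=|c(s,x)|_{S_G}$ and telescoping, then applying Jensen to pass the $p$-th power inside the length-$\ell$ sum and a Haar-invariance substitution to reduce each term to a single gradient increment, one controls the $g$-integral by $\ell^{\,p}$ gradient terms; integrating in $x$ produces exactly $\|\,|c(s,\cdot)|_{S_G}\,\|_{L^p(X_G)}^p$, finite by the $L^p$ hypothesis. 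Integrating over $s\in S_H$ yields the bound $\int_{X_H}\|\nabla_H\psi_\xi\|_p^p\,d\mu(\xi)\le C\,\|\nabla_G\phi\|_p^p$ with $C=\int_{S_H}\|\,|c(s,\cdot)|_{S_G}\,\|_{L^p(X_G)}^p\,ds<\infty$. The delicate points are the genuinely locally compact ones: passing from the left-gradient appearing in $j_{p,\cdot}$ to the right translations produced by the cocycle (using unimodularity and symmetry of $S_G$ to compare left and right gradients), tracking Haar normalizations through the telescoping, and securing the measurability and smoothness needed to disintegrate over the possibly infinite-measure domain $X_H$.

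Finally I would extract a good fiber and conclude. Comparing the two master relations, a pigeonhole argument produces some $\xi_0\in X_H$ with $\|\psi_{\xi_0}\|_p^p\ge\tfrac{\mu(X_G)}{2C}\,\tfrac{\|\phi\|_p^p}{\|\nabla_G\phi\|_p^p}\,\|\nabla_H\psi_{\xi_0}\|_p^p$, hence $\|\psi_{\xi_0}\|_p/\|\nabla_H\psi_{\xi_0}\|_p\gtrsim\|\phi\|_p/\|\nabla_G\phi\|_p$. The coarsely $m$-to-$1$ hypothesis enters as a uniform pushforward bound for the orbit map sending $h$ to the $G$-coordinate of $h\cdot\xi_0$, giving $\mu_H(\supp\psi_{\xi_0})\le C_m\,\mu_G(\supp\phi)\le C_m n$ up to a fixed coarse enlargement absorbed into the constant. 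Inserting $\psi_{\xi_0}$ into the definition of $j_{p,H}$ then gives $j_{p,H}(C_m n)\gtrsim\|\phi\|_p/\|\nabla_G\phi\|_p$, and taking the supremum over admissible $\phi$ produces $j_{p,G}(n)\le C'\,j_{p,H}(C_m n)$, that is $j_{p,G}\preccurlyeq j_{p,H}$. I expect the gradient estimate above, together with the simultaneous control of ratio and support when selecting $\xi_0$ over the non-finite domain $X_H$, to be the two main obstacles.
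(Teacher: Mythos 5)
Your overall scheme --- lift $\phi$ to the coupling, disintegrate along the $H$-fibers, bound the $H$-gradient via the cocycle and a telescoping estimate, then pigeonhole a good fiber --- is exactly the paper's route (your two ``master relations'' are the paper's $\|\tilde f\|_p^p=\nu_G(\df_G)\|f\|_p^p$ and its disintegration over $\df_H$, your telescoping-plus-Jensen bound is Lemma \ref{ineq} combined with Lemma \ref{Gradiente}, and your fiber selection is the positive-measure set $Z\subset\df_H$ in the proof of Theorem \ref{teoLp}). However, there is a genuine gap at the final support estimate, and it is precisely the step where the locally compact case differs from the discrete one. You assert that the coarsely $m$-to-$1$ hypothesis gives a ``uniform pushforward bound'' $\mu_H(\supp\psi_{\xi_0})\le C_m\,\mu_G(\supp\phi)$, ``up to a fixed coarse enlargement absorbed into the constant.'' This is false for arbitrary $\phi$. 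The coarse hypothesis is a covering statement: preimages under $c_x$ of $3$-balls in $G$ are covered by at most $m$ balls of radius $s$ in $H$. It therefore bounds $\la_H(c_x^{-1}(A))$ by $m\,\la_H(B(1_H,s))$ times the number of $3$-balls needed to cover $A$, and that covering number is \emph{not} controlled by $\la_G(A)$: a union of $N$ tiny balls spread far apart has arbitrarily small measure but covering number $N$, and the preimage can genuinely have measure of order $N$ (already for $H=G=\R$ and the coarsely injective, coarse-Lipschitz map $t\mapsto 100\lfloor t\rfloor$, the preimage of the null set $\{100k:1\le k\le N\}$ has measure $N$). So no fixed constant $C_m$ exists, and your concluding inequality fails for exactly those $\phi$ that nearly realize $j_{p,G}(n)$ but have badly ``spread out'' support.

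The missing ingredient is the reduction to $h$-thick sets, which is where the paper invests real work. It first proves (Lemma \ref{thickprofile}, resting on Proposition 8.3 of \cite{IsoProfilLC}, restated as Lemma \ref{Thick}) that $j_{p,G}$ is asymptotically unchanged if the supremum defining it is taken only over $h$-thick supports: any $f$ can be replaced by a function with $h$-thick support at the cost of an additive constant in the support measure and a multiplicative constant in the gradient ratio. Only for such $A$ does the coarse $m$-to-$1$ condition convert into a measure bound: Lemma \ref{mto1} takes a $3h$-discrete net $Z$ with $\#Z\cdot\la_G(B(1_G,h))\le\la_G(A)$, covers $c_x^{-1}(A)$ by $m\#Z$ balls of radius $s$, and gets $\la_H(\supp\tilde f_y)\le C_1\la_G(A)$. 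Note that thickness is used twice: for the support bound, and to guarantee $\la_H(\supp\tilde f_y)<\infty$, which is what forces $\|\nabla_H^l\tilde f_y\|_p>0$ so that the ratio of the selected fiber is even well defined. With the thickness reduction inserted before your construction, your argument goes through essentially verbatim; without it, what you prove concerns a covering-number profile rather than the isoperimetric profile.
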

As well as its sublinear version:
\begin{theoremletter}[Theorem \ref{teosublinear}]
\label{C}
    Let $\varphi\colon (0,\infty)\to (0,\infty)$ a function such that $\varphi$ and $t\mapsto \frac{t}{\varphi(t)}$ are non-decreasing functions. Suppose there exists a $\varphi$-measure subgroup coupling from $H$ to $G$, that is coarsely $m$-to-$1$, we then have that:
\begin{equation*}
\varphi \circ j_{1,G}  \preccurlyeq j_{1,H}  
\end{equation*}
\end{theoremletter}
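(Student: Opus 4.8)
The plan is to transcribe the discrete blueprint of Theorem \ref{RomainC} into the locally compact unimodular setting, reusing as much as possible the transport construction from the proof of Theorem \ref{B} in the exponent $p=1$. Write $(\Om,\mu)$ for the $\varphi$-measure subgroup coupling from $H$ to $G$, with its commuting measure-preserving $H$- and $G$-actions. Fix compact symmetric generating sets $S_G$ of $G$ and $S_H$ of $H$. Since the coupling goes from $H$ to $G$, I take a Borel fundamental domain $X_G$ of finite $\mu$-measure for the (free) $G$-action, which yields a $G$-coordinate map $\pi_G\colon\Om\to G$ sending $\psi$ to the unique $g\in G$ with $g^{-1}\cdot\psi\in X_G$, and a Borel fundamental domain $X_H$ of possibly infinite measure for the $H$-action. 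Because the two actions commute, applying $s\in H$ twists the $G$-coordinate by a cocycle $c\colon H\times\Om\to G$ characterised by $\pi_G(s\cdot\psi)=c(s,\psi)\,\pi_G(\psi)$. The $\varphi$-integrability hypothesis then reads $\int_{X_G}\varphi(|c(s,\omega)|_G)\,d\mu(\omega)<\infty$ for every $s\in S_H$, where $|\cdot|_G$ is the word length relative to $S_G$, and the coarsely $m$-to-$1$ hypothesis bounds, uniformly up to a coarse error, the overlap between $\supp f$ and the $H$-translates appearing in the construction below. Unimodularity of both groups is used once and for all to make the Haar normalisations of $G$, $H$ and the quotients compatible.

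Next I would carry out the transport. Given $n$, choose a near-optimal test function for $j_{1,G}$, i.e. a compactly supported $f\in L^1(G)$ with $\mu_G(\supp f)\leq n$ and $\|f\|_1/\|\nabla^l_G f\|_1$ within a fixed constant factor of $j_{1,G}(n)$. Define a function $F$ on $H$ by integrating the $G$-pullback of $f$ over the $H$-fundamental domain, namely $F(h)=\int_{X_H} f(\pi_G(h\cdot\omega))\,d\mu(\omega)$; the integrand is compactly supported, so this is well defined. Two estimates are needed. First, by Fubini and measure-preservation, $\|F\|_{L^1(H)}\leq C\|f\|_{L^1(G)}$, and the coarsely $m$-to-$1$ property forces $\mu_H(\supp F)\leq C m\, n$, so that $F$ is an admissible competitor for $j_{1,H}$ at scale $Cn$, giving $j_{1,H}(Cn)\geq \|F\|_1/\|\nabla^l_H F\|_1$. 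Second, and this is the crux, one bounds the transported gradient: for $s\in S_H$ the increment $F(sh)-F(h)=\int_{X_H}\big[f(c(s,h\omega)\,\pi_G(h\omega))-f(\pi_G(h\omega))\big]\,d\mu(\omega)$ telescopes along a $G$-path of length $|c(s,h\omega)|_G$ joining $\pi_G(h\omega)$ to $c(s,h\omega)\pi_G(h\omega)$, producing a pointwise control of the $s$-increment of $F$ by an integral of $\nabla^l_G f$ over that path.

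The heart of the matter is to convert this path estimate into the claimed $\varphi$-factor, following the sublinear mechanism of Theorem \ref{RomainC}. I would split the cocycle at a threshold $T$: on $\{|c(s,\cdot)|_G\leq T\}$ the telescoped contribution is controlled by $T\,\|\nabla^l_G f\|_1$, while on the complement a Markov-type inequality, using the finiteness of $\int_{X_G}\varphi(|c(s,\cdot)|_G)\,d\mu$, bounds the measure of the bad set by $\varphi(T)^{-1}\int_{X_G}\varphi(|c(s,\cdot)|_G)\,d\mu$, whose contribution is absorbed into $\|F\|_1$. Optimising over $T$ is exactly where both monotonicity hypotheses enter: monotonicity of $\varphi$ justifies the Markov bound on the large-cocycle set, while monotonicity of $t\mapsto t/\varphi(t)$ is what allows the optimal choice $T\approx j_{1,G}(n)$ to transform the ratio $\|f\|_1/\|\nabla^l_G f\|_1\approx j_{1,G}(n)$ into $\|F\|_1/\|\nabla^l_H F\|_1\geq c\,\varphi(j_{1,G}(n))$. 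Combining this with the support bound yields $\varphi(j_{1,G}(n))\leq C\, j_{1,H}(Cn)$ for all large $n$, which is precisely $\varphi\circ j_{1,G}\preccurlyeq j_{1,H}$.

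The step I expect to be the main obstacle is the gradient transport together with the threshold optimisation in the non-discrete setting: unlike the discrete case of Theorem \ref{RomainC}, one cannot well-order $G$ nor select pointwise-minimal geodesic representatives canonically, so the telescoping along $G$-paths must be performed measurably and uniformly in $\omega$, and the Markov/threshold argument must be reconciled with the merely coarse (rather than exact) $m$-to-$1$ hypothesis, the coarse defect being folded into the constants $C$ hidden in $\preccurlyeq$. A secondary technical point, disposed of at the outset, is the existence of well-behaved Borel fundamental domains $X_G$ and $X_H$ and the compatibility of the Haar normalisations, which is where the unimodularity of $H$ and $G$ is essential.
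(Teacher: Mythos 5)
Your proposal deviates from the paper's proof in two essential ways, and both deviations open genuine gaps. The first is the averaging step: you define a single function $F(h)=\int_{X_H} f(\pi_G(h\cdot\omega))\,d\mu(\omega)$ on $H$ and claim the coarse $m$-to-$1$ hypothesis forces $\la_H(\supp F)\leq Cmn$. It does not. That hypothesis (through Lemma \ref{mto1}, and only after reducing to $h$-thick supports via Lemma \ref{thickprofile} --- a reduction absent from your outline) gives a bound $\la_H(\supp \tilde f_x)\leq C_1\la_G(A)$ \emph{for each fixed} $x$, i.e.\ orbit by orbit. But $\supp F$ is the union over all $\omega\in X_H$ of these per-orbit supports, and as $\omega$ varies the cocycle $c(\cdot,\omega)$ drags the trace of $\supp f$ all over $H$; nothing prevents this union from having enormous or even infinite Haar measure (and $\nu_H(X_H)$ may itself be infinite, so no Fubini argument saves you). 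Hence $F$ is not an admissible competitor for $j_{1,H}$ at scale $Cn$. This is exactly why the paper does not average: it defines the pullback $\tilde f$ on all of $\Om$, disintegrates $\|\tilde f\|_1=\int_{\df_H}\|\tilde f_y\|_1\,d\nu_H(y)$, and uses a contradiction/pigeonhole argument to select a \emph{single} fiber $y\in\df_H$ whose norm-to-gradient ratio is within a factor $2$ of the average, so that the per-fiber support bound of Lemma \ref{mto1} applies to the chosen competitor $\tilde f_y$.

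The second gap is quantitative: the threshold/Markov mechanism cannot produce the factor $\varphi(j_{1,G}(n))$, no matter how $T$ is optimised. Writing $T_0=\|f\|_1/\|\nabla^r_G f\|_1\approx j_{1,G}(n)$, your splitting gives at best
\begin{equation*}
\|\nabla_H F\|_1 \;\lesssim\; T\,\|\nabla^r_G f\|_1+\frac{\|f\|_1}{\varphi(T)},
\end{equation*}
and making the second term $\lesssim \|f\|_1/\varphi(T_0)$ forces $T\gtrsim T_0$, which makes the first term of order $\|f\|_1$ --- useless. The optimal compromise genuinely loses: for $\varphi(t)=t$ it yields only $\|\nabla_H F\|_1\lesssim\sqrt{\|f\|_1\,\|\nabla^r_Gf\|_1}$, i.e.\ $\sqrt{\varphi}\circ j_{1,G}\preccurlyeq j_{1,H}$, because Markov charges the full $2\|f\|_1$ to every point of the bad set. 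The mechanism that actually works (both here and in the discrete Theorem \ref{RomainC}) is not a threshold but the pointwise interpolation of Lemma \ref{ineq2}: for \emph{each} cocycle value,
\begin{equation*}
\|f-g*f\|_1\leq 2\,\varphi\bigl(|g|_G\,\|\nabla_G f\|_1\bigr)\,\frac{\|f\|_1}{\varphi(\|f\|_1)},
\end{equation*}
obtained by multiplying the bound $\varphi(\|f-g*f\|_1)\leq\varphi(|g|_G\|\nabla_Gf\|_1)$ (monotonicity of $\varphi$) with $\|f-g*f\|_1/\varphi(\|f-g*f\|_1)\leq 2\|f\|_1/\varphi(2\|f\|_1)$ (monotonicity of $t/\varphi(t)$). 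One then normalises $\|\nabla^r_Gf\|_1=1$ and integrates over $\df_G$, so the resulting constant is exactly $\sup_{s\in S_H}\int_{\df_G}\varphi(|c(s,x)|_G)\,d\nu_G(x)$ --- precisely the quantity the $\varphi$-integrability hypothesis makes finite (Lemma \ref{GradientSublinear}). Your outline therefore needs repair at both ends: replace the averaged $F$ by a pigeonholed fiber $\tilde f_y$, and replace the Markov splitting by the interpolation inequality.
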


It is now clear that we obtain the following corollary:
\begin{cor}
  If there exists a regular embedding from $H$ to $G$ and $H,G$ are amenable, then for any $p\geq 1$ we know that:
\begin{equation*}
   j_{p,G}\preccurlyeq j_{p,H} 
\end{equation*}
\end{cor}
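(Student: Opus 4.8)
The plan is to chain the two main theorems of the article. Since $H$ is amenable and we are given a regular embedding from $H$ to $G$, I would first apply Theorem \ref{A} to obtain an $L^\infty$-measure subgroup coupling $(\Om,\mu)$ from $H$ to $G$ that is coarsely $m$-to-$1$ for some $m>0$. This step is exactly the content of the $(1)\Rightarrow(2)$ implication of Theorem \ref{A}, and it is what converts the purely metric hypothesis into the dynamical object required by the monotonicity statement.

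The intermediate observation is that every $L^\infty$-measure subgroup coupling is in particular an $L^p$-measure subgroup coupling for each finite $p\geq 1$. The $L^p$ integrability requirement concerns the displacement associated to the coupling, measured over a fundamental domain $D$ of finite measure; since the $L^\infty$ condition bounds this displacement essentially uniformly, $p$-integrability over $D$ follows from the elementary inequality $\|\cdot\|_{L^p(D)}\leq \mu(D)^{1/p}\,\|\cdot\|_{L^\infty(D)}$. The coarse $m$-to-$1$ property is a property of the coupling itself, so it is untouched by relaxing the integrability exponent, and the coupling remains coarsely $m$-to-$1$ when regarded as an $L^p$ coupling.

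It then suffices to invoke Theorem \ref{B}: having an $L^p$-measure subgroup coupling from $H$ to $G$ that is coarsely $m$-to-$1$ yields $j_{p,G}\preccurlyeq j_{p,H}$, and since $p\geq 1$ was arbitrary this holds for all $p\geq 1$. The whole argument is a formal consequence of Theorems \ref{A} and \ref{B}, so no serious difficulty arises; the only point demanding a moment's care is the containment of coupling classes $L^\infty\Rightarrow L^p$, whose verification relies on the finiteness of the fundamental domain.
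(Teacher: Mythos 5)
Your proposal is correct and matches the paper's intended argument exactly: the corollary follows by chaining Theorem \ref{A} (regular embedding plus amenability of $H$ yields a coarsely $m$-to-$1$ $L^\infty$-measure subgroup coupling) with Theorem \ref{B}, using the fact that $L^\infty$ implies $L^p$ integrability because $\nu_G$ is a finite measure --- a point the paper itself records in the definition of $L^p$-measure subgroup couplings.
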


On the other hand, another related coarse invariant is the return probability to the origin. More precisely, in the discrete setting for a group $\Ga$ with finite symmetric generating set $S$, we can consider the natural random walk on the Cayley graph of $(\Ga,S)$ starting at the identity, and denote by $p_{\Ga,S}^{2n}$ the probability that in $2n$ steps the random walk will visit the identity again. We will be interested on its asymptotic behaviour as $n$ goes to infinity. It was proven in \cite{Pittet} that such asymptotic behaviour doesn't depend on the choice of generating set and moreover it is invariant under quasi-isometries. Furthermore, in  the locally compact setting, that is for $G$ an unimodular locally compact second countable compactly generated group, the analog of such an invariant can be defined and also proven its invariance under change of generating sets (cf. \cite{Dretete}).
Moreover, as the isoperimetric profile, it is an invariant that measures how amenable $\Ga$ is, in the sense that $\Ga$ is not amenable if and only if $p_{\Ga}^{2n}\approx e^{-n}$. It is also known that the $L^2$-isoperimetric profile and the return probability to the origin are related under some mild assumptions, this was seen in the discrete setting in \cite{Coulhon} and in the locally compact setting in \cite{IsoProfilLC}. More precisely we have the following: 
\begin{prop}[Theorem 9.2 in \cite{IsoProfilLC}]
\label{proba}
    Given $G$ an unimodular locally compact second countable group with compact symmetric generating set $S$. Consider $\ga$ the function defined by:
    \begin{equation}
        t=\bigintssss_1^{\ga(t)} \frac{j_{2,G}(v)^2}{v} dv.\label{alpha}    \end{equation}
 If the logarithmic derivative of $\ga$ has at most polynomial growth, then we have that:
\begin{equation*}
    p_{G}^{2n} \approx \dfrac{1}{\ga(2n)}
\end{equation*}
\end{prop}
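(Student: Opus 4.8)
The plan is to establish the two asymptotic inequalities $p_{G}^{2n}\preccurlyeq 1/\ga(2n)$ and $1/\ga(2n)\preccurlyeq p_{G}^{2n}$ separately, the decisive ingredient being a Faber--Krahn type inequality read off directly from the definition of the $L^2$-isoperimetric profile. First I would pass from the discrete-time convolution powers defining $p_{G}^{2n}$ to the continuous-time heat semigroup $P_t=e^{-t\Delta}$ attached to the Markov operator of the random walk, where $\Delta$ is the non-negative self-adjoint Laplacian on $L^2(G)$. By self-adjointness and the semigroup property one has $p_{2t}(e,e)=\langle P_{2t}\delta_e,\delta_e\rangle=\|P_t\delta_e\|_2^2$, and the standard comparison between the discrete and continuous heat kernels (licit since $S$ is compact and symmetric) shows that $p_{G}^{2n}$ and $p_{2n}(e,e)$ are asymptotically equivalent. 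It therefore suffices to control $u(t):=\|P_t\delta_e\|_2^2$, where I write $P_t\delta_e$ for the heat kernel $x\mapsto p_t(e,x)$, which satisfies $\|P_t\delta_e\|_1=1$ by conservativeness.

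For the \emph{upper bound}, observe that by the very definition of $j_{2,G}$, any $f\in L^2(G)$ with $\mu(\supp f)\leq v$ obeys $\|f\|_2\leq j_{2,G}(v)\,\|\nabla f\|_2$, so that the bottom of the spectrum of $\Delta$ restricted to such functions is at least $j_{2,G}(v)^{-2}$; this is exactly a Faber--Krahn inequality with profile $\La(v)=j_{2,G}(v)^{-2}$. Applying it to the level sets of $f=P_t\delta_e$ via a co-area/layer-cake decomposition, and using $\|P_t\delta_e\|_1=1$ so that the effective volume is comparable to $1/u(t)$, I would derive the Nash-type differential inequality
\begin{equation*}
-u'(t)=2\|\nabla P_t\delta_e\|_2^2\;\geq\; \frac{c\,u(t)}{j_{2,G}\bigl(1/u(t)\bigr)^{2}}.
\end{equation*}
Setting $w=1/u$ this reads $w'\geq c\,w/j_{2,G}(w)^2$, i.e.\ $\tfrac{d}{dt}\int_1^{w(t)} j_{2,G}(s)^2 s^{-1}\,ds\geq c$; integrating and comparing with the defining relation \eqref{alpha} for $\ga$ gives $w(t)\geq\ga(c't)$, hence $u(t)\leq 1/\ga(c't)$ and thus $p_{G}^{2n}\preccurlyeq 1/\ga(2n)$.

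The \emph{lower bound} is the more delicate half and is where the hypothesis on the logarithmic derivative of $\ga$ enters. Here I would produce a matching estimate $u(t)\geq 1/\ga(Ct)$ by the Coulhon--Grigor'yan principle that a Faber--Krahn inequality with a \emph{regular} profile is self-improving to a two-sided on-diagonal bound: testing the Dirichlet form against functions supported on balls of volume $\approx\ga(t)$ and controlling the decay of $s\mapsto\|P_s\delta_e\|_2^2$ over the interval $[0,t]$, one shows that $u$ cannot decay faster than the solution of the companion ODE. The polynomial growth of $\ga'/\ga$ guarantees that $v\mapsto\int_1^v j_{2,G}(s)^2 s^{-1}\,ds$ and its inverse $\ga$ enjoy enough doubling-type regularity for the upper and lower solutions to coincide up to the constants hidden in $\approx$, thereby reconciling the two one-sided bounds into $p_{G}^{2n}\approx 1/\ga(2n)$.

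The main obstacle I anticipate is precisely this lower bound together with running the co-area and level-set machinery in the non-discrete setting: the layer-cake argument must be carried out against the Haar measure $\mu$ rather than counting measure, and one must first verify rigorously that the integer-time return probabilities $p_{G}^{2n}$ are comparable to the continuous heat kernel $p_{2n}(e,e)$. The regularity assumption on $\ga$ is exactly what is needed to upgrade the Faber--Krahn inequality from the (essentially automatic) upper bound to the full equivalence; without it the same argument yields only $p_{G}^{2n}\preccurlyeq 1/\ga(2n)$.
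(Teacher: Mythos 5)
There is a structural point to make first: the paper does not prove Proposition \ref{proba} at all. It is imported verbatim as Theorem 9.2 of \cite{IsoProfilLC}, and the discussion following the statement makes clear that the proof lives entirely in that reference, in the greater generality of metric measure spaces equipped with viewpoints $(P_x)_{x\in X}$. So there is no internal proof to compare against, and your proposal can only be measured against the cited argument. Your upper-bound half does follow that argument's standard route: a Faber--Krahn inequality read off the definition of $j_{2,G}$, a truncation argument giving a Nash-type differential inequality for $u(t)$, and integration of the resulting ODE against the defining relation (\ref{alpha}) for $\ga$. Two technical caveats there: in a non-discrete group $\delta_e\notin L^2(G)$, so the semigroup computation must be started from the time-one kernel (or the density of the step distribution) rather than from $\delta_e$; and the step ``effective volume comparable to $1/u(t)$'' is really the Markov-inequality bound $\la_G(\{P_t\delta_e>\rho\})\le 1/\rho$ applied to the truncations $(P_t\delta_e-\rho)_+$ with $\rho$ of order $u(t)$.

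The genuine gap is in your lower bound. There is no principle by which a Faber--Krahn inequality, however regular its profile, self-improves to a two-sided on-diagonal estimate: the inequality $\|f\|_2\le j_{2,G}(v)\,\|\nabla f\|_2$ remains true if $j_{2,G}$ is replaced by any larger function, so it cannot by itself force $p_G^{2n}$ to be large. What produces the lower bound is the other half of the definition of $j_{2,G}$, namely that it is a \emph{supremum}: for every $v$ there exist near-optimizers $f_v$ with $\la_G(\supp f_v)\le v$, $\|f_v\|_2=1$ and $\|\nabla f_v\|_2\le 2/j_{2,G}(v)$. These --- and not functions supported on balls, which in general (e.g.\ lamplighter groups, where balls are far from F\o lner sets) only yield the much weaker volume-growth lower bound --- are the test functions to feed into the spectral (Jensen) inequality $\langle P_{2t}f,f\rangle \ge \|f\|_2^2\exp\bigl(-2t\langle \Delta f,f\rangle/\|f\|_2^2\bigr)$, combined with $\langle P_{2t}f,f\rangle\le p_{2t}(e,e)\,\|f\|_1^2\le p_{2t}(e,e)\,v$, which uses Cauchy--Schwarz together with the invariance identity $\sup_{x,y}p_{2t}(x,y)=p_{2t}(e,e)$. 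One then optimizes $v=\ga(Kt)$; differentiating (\ref{alpha}) gives $(\log\ga)'(t)=1/j_{2,G}(\ga(t))^2$, so the exponential term becomes $\exp\bigl(-cKt\,(\log\ga)'(Kt)\bigr)$, and it is precisely the polynomial-growth hypothesis on $(\log\ga)'$ that allows this loss to be absorbed into the $\approx$-equivalence. In other words, the hypothesis enters where you said it would, but through a mechanism your sketch does not supply; as written, the lower-bound half would not go through.
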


Note that Proposition \ref{proba} is proved in \cite{IsoProfilLC} in the more general context of metric measured spaces $(X,d,\mu)$ and of viewpoints $(P_x)_{x\in X}$; these are a family of appropiate probabilities absolutely continuous with respect to $\mu$ that depend on $x$. In particular, in the context of an unimodular locally compact second countable compactly generated group $G$, the metric to be considered is a left-invariant metric quasi-isometric to the word metric and the measure $\mu$ will be a fixed left-invariant Haar measure.

Moreover, the condition given in Proposition \ref{proba} is a mild condition in the sense that it is satisfied for a very large class of amenable groups (see Table 1 in \cite{BendikovPittetSauer}). Let's recall as well that such condition is called condition $(\delta)$ in \cite{Coulhon}, by an abuse of notation let's say that $G$ satisfies condition $(\delta)$ if $\gamma$ defined by equation (\ref{alpha}) satisfies such a condition.

Now as consequence of this theorem we obtain the following corollary:
\begin{cor}
Given $H,G$ amenable that satisfy the condition $(\delta)$ of Proposition \ref{proba}, if there exists a regular embedding from $H$ to $G$,  we then have that:
\begin{equation*}
    p_G^{2n}\preccurlyeq p_H^{2n}
\end{equation*}
\end{cor}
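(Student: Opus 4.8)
The plan is to combine the monotonicity of the isoperimetric profile under regular embeddings, in the special case $p=2$, with Proposition \ref{proba}, which expresses the return probability in terms of the $L^2$-isoperimetric profile. Since there is a regular embedding from $H$ to $G$ and both groups are amenable, the previous Corollary applied to $p=2$ gives $j_{2,G}\preccurlyeq j_{2,H}$. Because $H$ and $G$ both satisfy condition $(\delta)$, Proposition \ref{proba} applies to each and yields $p_G^{2n}\approx 1/\ga_G(2n)$ and $p_H^{2n}\approx 1/\ga_H(2n)$, where $\ga_G$ and $\ga_H$ are the functions defined by equation (\ref{alpha}) using $j_{2,G}$ and $j_{2,H}$ respectively. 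It therefore suffices to show that the comparison $j_{2,G}\preccurlyeq j_{2,H}$ forces the comparison $1/\ga_G\preccurlyeq 1/\ga_H$ of the decreasing functions governing the return probabilities.

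To transfer the comparison from the profiles to the functions $\ga_G,\ga_H$, I would work directly with the defining relation (\ref{alpha}). Writing $F_\bullet(s)=\int_1^s \frac{j_{2,\bullet}(v)^2}{v}\,dv$, so that $\ga_\bullet=F_\bullet^{-1}$, the hypothesis $j_{2,G}\preccurlyeq j_{2,H}$ provides constants $K,C>0$, with $C\geq 1$ since $j_{2,H}$ is non-decreasing, such that $j_{2,G}(v)\leq K\,j_{2,H}(Cv)$ for $v$ large. Squaring and integrating, the change of variables $w=Cv$ turns $\frac{dv}{v}$ into $\frac{dw}{w}$ and identifies $\int \frac{j_{2,H}(Cv)^2}{v}\,dv$ with an integral of the form $\int \frac{j_{2,H}(w)^2}{w}\,dw$, so that $F_G(s)\leq K^2 F_H(Cs)$ up to a bounded additive error coming from the finite initial range where the pointwise bound may fail. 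Inverting this inequality between the increasing functions $F_G,F_H$ produces a lower bound of the shape $\ga_G(u)\geq \frac{1}{C}\,\ga_H(u/K^2)$ (modulo an affine change of the argument), hence $1/\ga_G(u)\leq C/\ga_H(u/K^2)$. Substituting into $p_G^{2n}\approx 1/\ga_G(2n)$ and $p_H^{2n}\approx 1/\ga_H(2n)$ gives precisely $p_G^{2n}\preccurlyeq p_H^{2n}$.

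The substantive content is already carried by the earlier results, so the remaining work is the bookkeeping of constants through equation (\ref{alpha}). The one point that really must be checked is the direction of the monotonicity: a slower-growing $L^2$-isoperimetric profile for $G$ should correspond to a faster-decaying return probability, so one must verify that inverting $F_G\leq K^2 F_H(C\,\cdot)$ places $1/\ga_G$ below $1/\ga_H$ and not the reverse. The main obstacle, such as it is, lies in stating $\approx$ and $\preccurlyeq$ correctly for the decreasing functions $n\mapsto p_\bullet^{2n}$, allowing the customary rescaling of the argument, and in confirming that the factors $1/K^2$ and $1/C$ are compatible with this convention. The amenability of $H$ and $G$ guarantees that $j_{2,\bullet}$ is unbounded and hence that $\ga_G,\ga_H$ are eventually increasing, so neither the additive error nor the invertibility of $F_G,F_H$ causes any trouble.
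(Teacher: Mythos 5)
Your proposal is correct and follows essentially the same route the paper intends: apply the monotonicity corollary for the isoperimetric profile with $p=2$ to get $j_{2,G}\preccurlyeq j_{2,H}$, then use Proposition \ref{proba} for both groups and transfer the comparison through equation (\ref{alpha}). The paper states the corollary as an immediate consequence without spelling out the inversion of the integral relation; your bookkeeping (showing $F_G \leq K^2 F_H(C\,\cdot)$ up to a bounded error and inverting to get $1/\ga_G \preccurlyeq 1/\ga_H$, with the correct direction of the inequalities) is exactly the omitted verification and is sound.
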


On the other hand, we also have the following:
\begin{cor}
Given $G$ amenable and unimodular, if there exists a regular embedding from $G$ to $GL(d,k)$ where $k$ is a finite product of local fields, we obtain that:
\begin{equation*}
    \log t \preccurlyeq j_{p,G}(t)
\end{equation*}
\end{cor}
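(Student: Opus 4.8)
The plan is to first translate the hypothesis into the measured language of the paper, and then to import a lower bound coming from the linear structure of the target. Since $G$ is amenable and $GL(d,k)$ is a unimodular locally compact compactly generated group, Theorem \ref{A} turns the regular embedding $G\to GL(d,k)$ into an $L^\infty$-measure subgroup coupling from $G$ to $GL(d,k)$ that is coarsely $m$-to-$1$; as $L^\infty\subseteq L^p$ on sets of finite measure, this is in particular an $L^p$-measure subgroup coupling that is coarsely $m$-to-$1$.

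The naive continuation, feeding this coupling into Theorem \ref{B}, is vacuous, because $GL(d,k)$ is non-amenable and hence $j_{p,GL(d,k)}$ is bounded. The real content is therefore to replace $GL(d,k)$ by an amenable subgroup that still governs $G$. I would do this through a locally compact, measured Tits alternative: the cocycle associated with the coupling takes values in $GL(d,k)$, and amenability of $G$ forces it, up to cohomology and a compact kernel, to reduce to a closed compactly generated amenable subgroup $A\leq GL(d,k)$. This produces an $L^p$-measure subgroup coupling from $G$ to $A$ that is still coarsely $m$-to-$1$, to which Theorem \ref{B} now applies and yields $j_{p,A}\preccurlyeq j_{p,G}$.

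It then remains to bound $j_{p,A}$ from below. Being a closed compactly generated amenable subgroup of a linear group over a finite product of local fields, $A$ is virtually solvable, so it embeds into a triangular subgroup and therefore has at most exponential F\o lner function; equivalently, by the isoperimetric computations for solvable groups over local fields in \cite{IsoProfilLC} and \cite{BendikovPittetSauer}, one has $\log t\preccurlyeq j_{p,A}(t)$. Chaining the two estimates gives $\log t\preccurlyeq j_{p,A}(t)\preccurlyeq j_{p,G}(t)$, as wanted. The main obstacle is precisely the reduction step: the non-amenability of $GL(d,k)$ makes the monotonicity theorem useless on the nose, so the argument must pass to an amenable subgroup in a way compatible with the coupling, which is where a cocycle amenable-reduction argument in the spirit of the structure theory of amenable subgroups of linear groups is needed. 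The subsequent logarithmic lower bound for amenable linear groups is comparatively routine given the existing profile computations.
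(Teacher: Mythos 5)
Your first step (Theorem \ref{A} converts the regular embedding into an $L^\infty$, hence $L^p$, coarsely $m$-to-$1$ coupling from $G$ to $GL(d,k)$) and your diagnosis that feeding this directly into Theorem \ref{B} is vacuous are both correct. The gap is the reduction step, which is precisely where you place all the content and which you do not actually carry out. A ``measured Tits alternative'' reducing the cocycle to an amenable closed subgroup $A\leq GL(d,k)$ is a Zimmer-type statement: it needs hypotheses (ergodicity, and control of the algebraic hull) that are not available here, and crucially it only produces a reduction \emph{up to cohomology}. Replacing $c_G$ by $\phi(h\cdot x)^{-1}c_G(h,x)\phi(x)$ for an unbounded measurable $\phi$ destroys both the $L^p$-integrability and the coarsely $m$-to-$1$ property, and it is not explained how a cocycle with values in $A$ would be upgraded back into an actual measure subgroup coupling from $G$ to $A$ (commuting smooth free actions, fundamental domains, invariant measure). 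None of this machinery exists in the paper, so as written this step is an unproven assertion, not a proof. Moreover, even granting the reduction, a second gap appears: the amenable subgroup $A$ one lands in is typically \emph{non-unimodular} (e.g.\ a triangular group), while Theorem \ref{B} and the very definition of the isoperimetric profile used here require unimodular groups. Your argument never addresses this, and it is exactly the difficulty the corollary has to confront.

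The paper's proof avoids couplings and cocycle reduction entirely and works at the level of regular embeddings, which compose. Since the upper triangular group $T(d,k)$ is cocompact in $GL(d,k)$, the regular embedding may be assumed to land in $T(d,k)$; this is the elementary substitute for your Tits-alternative step, exploiting the concrete structure of $GL(d,k)$ rather than a measured rigidity theorem. The non-unimodularity of $T(d,k)$ is then repaired by an explicit doubling trick: writing $T(d,k)=A(d,k)\ltimes_\sigma N(d,k)$, one embeds it into $S(d,k)=A(d,k)\ltimes(N(d,k)\times N(d,k))$ with twisted action $\overline\sigma(a)=(\sigma(a),\sigma(a^{-1}))$, which is amenable, unimodular (the modular contributions of $\sigma(a)$ and $\sigma(a^{-1})$ cancel), and has isoperimetric profile $\approx\log t$ by \cite{tesseraIsoperimetricprofilerandomwalksLCsolvablegroups2013}. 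Only then are Theorems \ref{A} and \ref{B} invoked, for the regular embedding $G\to S(d,k)$ between amenable unimodular groups, giving $\log t\approx j_{p,S(d,k)}(t)\preccurlyeq j_{p,G}(t)$. If you want to salvage your outline, the fix is to replace the cocycle-reduction step by this geometric composition and to route the final application of Theorem \ref{B} through a unimodular amenable target such as $S(d,k)$.
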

\begin{proof}
    Since $T(d,k)$ the group of upper triangular matrices in $GL(d,k)$ is cocompact in $GL(d,k)$, we can suppose that there exists a regular embedding from $G$ to $T(d,k)$, however $T(d,k)$ is not unimodular. In order to obtain a regular embedding to an unimodular amenable group, let's note that $T(d,k)$ is a semidirect product of $A(d,k)$ and $N(d,k)$, where $A(d,k)$ is abelian and $N(d,k)$ is nilpotent. Let's denote by $\sigma\colon A(d,k)\to \text{Aut}(N(d,k))$ the morphism coming from this semi-direct product. If we consider $S(d,k)=A(d,k)\ltimes (N(d,k)\times N(d,k))$; given by $\overline \sigma \colon A(d,k) \to \text{Aut}(N(d,k)\times N(d,k))$, $\overline \sigma(a)=(\sigma(a),\sigma(a^{-1}))$, then we obtain that there exists a regular embedding from $G$ to $S(d,k)$, with $S(d,k)$ being an unimodular amenable group that has $\log t$ as isoperimetric profile by \cite{tesseraIsoperimetricprofilerandomwalksLCsolvablegroups2013}. We can now apply the theorem \ref{B} to obtain the desired corollary.
\end{proof}

Moreover, we have the following well-known proposition (see Sections 4.1.C and 4.1.D in \cite{GromovCoarseEmbeddings}, see also Lemma 2.2 in \cite{Frances}) that is an important source of coarse embeddings:
\begin{prop}
    Let's consider $G$ acting properly and by isometries on a compact pseudo-riemannian manifold $M$ of signature $(p,q)$, we then have that there exists a coarse embedding from $G$ to $O(p,q)$.
\end{prop}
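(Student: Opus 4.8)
The plan is to manufacture a topological subgroup coupling from $G$ to $O(p,q)$ out of the bundle of frames of $M$, and then to invoke the dynamical criterion for coarse embeddings, that is, the locally compact version of Proposition \ref{CoarseEmbedding}.

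First I would set up the coupling space. Let $R(M)$ be the bundle of pseudo-orthonormal frames of $(M,g)$: a point is a pair $(x,u)$ with $x\in M$ and $u\colon \mathbb{R}^{p,q}\to T_xM$ a linear isometry for the ambient form of signature $(p,q)$. This is the total space of a principal $O(p,q)$-bundle $\pi\colon R(M)\to M$, the structure group acting on the right by $(x,u)\cdot A=(x,u\circ A)$. Since every $g\in G$ acts on $M$ by an isometry, it acts on $R(M)$ through its differential, $g\cdot(x,u)=(gx,\, d_xg\circ u)$, and this is well defined because the differential of an isometry is a linear isometry between the tangent spaces. The left $G$-action and the right $O(p,q)$-action commute, since one post-composes a frame with $d_xg$ while the other pre-composes it with $A$.

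Next I would check the hypotheses of the criterion. The right $O(p,q)$-action is free and proper, being the structure-group action of a principal bundle, and it is cocompact because $R(M)/O(p,q)=M$ is compact; moreover $R(M)$ is a smooth manifold, hence locally compact. The remaining and essential point is that the $G$-action on $R(M)$ is proper: this is where the hypothesis that $G$ acts properly by isometries enters. Since $M$ is compact this properness must be read on the frame bundle — equivalently, $G$ is identified with a closed subgroup of $\mathrm{Isom}(M,g)$ — and it follows from the pseudo-riemannian rigidity that an isometry is determined by its $1$-jet, so that for a fixed frame $r_0$ the orbit map $g\mapsto g\cdot r_0$ is a proper embedding of $G$ into $R(M)$.

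Having verified that $R(M)$ is a locally compact space carrying commuting proper $G$- and $O(p,q)$-actions, with the $O(p,q)$-action cocompact, $R(M)$ is a topological subgroup coupling from $G$ to $O(p,q)$, and applying the dynamical criterion yields the desired coarse embedding from $G$ to $O(p,q)$. The main obstacle is precisely the properness of the $G$-action on $R(M)$, which rests on the rigidity of pseudo-riemannian isometries and on the correct reading of the properness hypothesis at the level of the frame bundle; the commutation of the two actions and the cocompactness of the $O(p,q)$-action are immediate from the principal-bundle structure and the compactness of $M$.
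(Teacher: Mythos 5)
Your proof is correct and follows essentially the same route as the paper: both take the bundle of pseudo-orthonormal frames $\mathrm{Fr}(M)$ as a topological subgroup coupling from $G$ to $O(p,q)$ (commuting actions, $O(p,q)$-action proper and cocompact, $G$-action proper) and then invoke the dynamical criterion of Proposition \ref{CoarseEmbedding}. The only difference is one of detail: you justify the properness of the $G$-action on the frame bundle via the $1$-jet rigidity of pseudo-riemannian isometries and correctly note that the properness hypothesis must be read at the level of the frame bundle, a point the paper's proof passes over with ``it is clear.''
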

\begin{proof}
    This is easy to see using the Proposition \ref{CoarseEmbedding}, that is we will provide a topological subgroup coupling $\Omega$ given by the frame bundle $\text{Fr}(M)$ of orthonormal basis under the pseudo-riemannian metric on $M$. It is clear that the $G$-action on $\text{Fr}(M)$ is proper and the $O(p,q)$-action on $\text{Fr}(M)$ is proper and cocompact, and that both actions commute with each other. This will provide us with a coarse embedding from $G$ to $O(p,q)$.
\end{proof}
As a consequence we have that:
\begin{cor}
    Given $G$ an amenable unimodular group that acts properly and by isometries on a compact pseudo-riemannian manifold, we then have that:
    \begin{equation*}
        \log t \preccurlyeq j_{p,G}(t)
    \end{equation*}
\end{cor}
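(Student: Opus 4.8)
The plan is to reduce this corollary to the preceding one, concerning regular embeddings into $GL(d,k)$, by realising $O(p,q)$ as an undistorted closed subgroup of a general linear group over the local field $\mathbb R$. First I would feed the hypothesis into the proposition above: since $G$ acts properly and by isometries on the compact pseudo-riemannian manifold $M$ of signature $(p,q)$, that proposition produces a coarse embedding $f\colon G\to O(p,q)$. Setting $d=p+q$, I would then view $O(p,q)$ as a closed subgroup of $GL(d,\mathbb R)$, namely as the fixed-point set of the involution $\theta(g)=I_{p,q}(g^{-1})^{T}I_{p,q}$, so that $O(p,q)$ is a symmetric, hence reductive, subgroup of $GL(d,\mathbb R)$.

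The geometric input I would then use is that such a subgroup is \emph{undistorted}: the inclusion $\iota\colon O(p,q)\hookrightarrow GL(d,\mathbb R)$ induces a totally geodesic, hence quasi-isometric, embedding of the associated Riemannian symmetric spaces, so that $\iota$ is itself a coarse embedding. Composing, $\iota\circ f\colon G\to GL(d,\mathbb R)$ is a coarse embedding into a general linear group over a local field. Finally I would invoke the preceding corollary with $d=p+q$ and $k=\mathbb R$. Although that corollary is phrased for regular embeddings, its proof uses the embedding only through the coarsely $m$-to-$1$ coupling it yields, and a coarse embedding between locally compact compactly generated groups, having preimages of balls of uniformly bounded diameter and therefore of uniformly bounded Haar measure, is coarsely $m$-to-$1$ as well; this is precisely the monotonicity of $j_p$ under coarse embeddings. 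Thus the cocompactness of $T(d,\mathbb R)$ in $GL(d,\mathbb R)$ and the passage to the unimodular amenable group $S(d,\mathbb R)$ of isoperimetric profile $\log t$ apply verbatim, and Theorem \ref{B} gives
\begin{equation*}
\log t \preccurlyeq j_{p,S(d,\mathbb R)}(t)\preccurlyeq j_{p,G}(t),
\end{equation*}
which is the claim.

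The hard part will be the undistortion of $O(p,q)$ inside $GL(p+q,\mathbb R)$: a coarse embedding of $G$ into $O(p,q)$ composed with a merely Lipschitz inclusion need not remain a coarse embedding, so one genuinely needs that $O(p,q)\cap B_{GL}(\cdot,R)$ has bounded $O(p,q)$-diameter, and this is exactly where the symmetric-subgroup (reductive) structure is indispensable. The remaining step, converting the coarse embedding into the coarsely $m$-to-$1$ coupling required by Theorem \ref{B}, is routine once one observes that bounded-diameter subsets have bounded measure; it is here that the amenability of $G$ enters, through Theorem \ref{A}, in order to produce the $L^\infty$-measure subgroup coupling underlying the monotonicity.
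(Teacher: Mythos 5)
Your proposal is correct and is essentially the paper's own (implicit) argument: the paper states this corollary as a direct consequence of the preceding proposition, which yields the coarse embedding $G\to O(p,q)$, combined with the preceding corollary on embeddings into $GL(d,k)$ with $k=\mathbb{R}$, $d=p+q$ --- exactly your reduction. The details you supply that the paper glosses over --- that $O(p,q)$ is undistorted in $GL(p+q,\mathbb{R})$ (via the totally geodesic orbit in the symmetric space), so the composition remains a coarse embedding, and that such a coarse embedding feeds into the $GL(d,k)$ corollary because ball-preimages of uniformly bounded \emph{diameter} can be covered by boundedly many $s$-balls in a balanced space (the bounded-measure remark alone would not suffice for this) --- are correct and fill the gap in the right way.
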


\paragraph{\textbf{Outline of the Article}}
The second chapter is mainly devoted to introduce the necessary preliminaries, namely locally compact second countable group actions on Polish spaces, as well as defining quantitative Measure Subgroups and regular embeddings in the locally compact setting. The third chapter is devoted to the construction of the Measure Subgroup Coupling out of the existence of a regular embedding between locally compact groups. The final chapter is devoted to the introduction of the isoperimetric profile for locally compact groups as well as the proof of the monotonicity of the isoperimetric profile under certain quantitative measure subgroups.

\paragraph{\textbf{Acknowledgements}}
The author is grateful to his advisor Romain Tessera for his encouragement as well as suggestion of the problem.

\section{Preliminaries}

\subsection{Locally compact second countable group actions}

In this subsection we will introduce the necessary background to understand Borel actions, as well as fix notations needed in the article. In particular, we say that a Polish topological space is a separable completely metrizable topological space, and a standard Borel space is a measurable space $(X,\mathcal S)$ such that there exists a Polish topology on $X$ such that $\mathcal S$ is the $\sigma$-algebra generated by this topology.

Let's consider the following definitions:
\begin{defi}[Definition 2.1.8 in \cite{Zimmer}]
Given a measurable space $(X,\mathcal S)$, we say that this space is \textbf{countably separated} if there exists a countable collection of Borel sets $\{A_i\}_{i\in I}$ which separates points, that is for any $x,y$ in $X$, we have that $x$ is different from $y$ if and only if there exists $i\in I$ such that $x$ belongs to $U_i$ and $y$ doesn't belong to $A_i$.   
\end{defi}

\begin{defi}[Definition 2.1.9 in \cite{Zimmer}]
    Let's consider $G$ a locally compact second countable group and $X$ a standard Borel space. We say that a Borel action $G\curvearrowright X$ is \textbf{smooth} if $X/G$ with its natural measurable structure is countably separated.
\end{defi}

The main use that we will have from this definition is the following lemma:
\begin{lem}[Theorem A.7 in \cite{Zimmer}]
    Given $G$ a locally compact second countable group and $X$ a standard Borel space. If a Borel action $G\curvearrowright X$ is smooth, then $X/G$ is standard and there is a Borel section $s \colon X/G \to X$ of the natural projection. The image of $X/G$ under this section will be called a \textbf{ fundamental domain } of this action. 
\end{lem}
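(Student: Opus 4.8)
The plan is to reduce to a continuous action, exploit the fact that orbits of locally compact groups are $\sigma$-compact, and then apply a Borel selection theorem. First I would invoke the Becker--Kechris topological realization theorem to replace the Borel structure on $X$ by a compatible Polish topology for which the action $G \curvearrowright X$ is continuous. This changes neither the Borel sets, nor the orbit equivalence relation, nor the quotient Borel structure, so there is no loss of generality in assuming $X$ Polish with a continuous action. Since $G$ is locally compact and second countable it is $\sigma$-compact, say $G = \bigcup_n K_n$ with each $K_n$ compact; continuity then gives $Gx = \bigcup_n K_n x$, a countable union of compacta, so every orbit is $\sigma$-compact in $X$.

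Next I would convert smoothness into a Borel parametrization of the orbit space. By hypothesis $X/G$ is countably separated, so I fix a countable family $\{U_i\}$ of Borel subsets of $X/G$ separating points and set $B_i = \pi^{-1}(U_i)$, a $G$-invariant Borel subset of $X$. The map $\phi \colon X \to 2^{\mathbb N}$, $\phi(x) = (\mathbf 1_{B_i}(x))_i$, is then Borel, constant on orbits, and separates distinct orbits, so its fibers are exactly the $G$-orbits. Writing $Y = \phi(X)$, the induced map $\bar\phi \colon X/G \to Y$, $\bar\phi([x]) = \phi(x)$, is a bijection.

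The heart of the argument is to uniformize $\phi$ in a Borel way. I would consider $\tilde P = \{(y,x) \in 2^{\mathbb N} \times X : \phi(x) = y\}$, which is Borel (the preimage of the diagonal under $(y,x) \mapsto (y,\phi(x))$) and whose $y$-sections $\phi^{-1}(y)$ are empty or a single orbit, hence $\sigma$-compact by the first paragraph. The Arsenin--Kunugui theorem, which says that a Borel set with $\sigma$-compact sections has Borel projection and admits a Borel uniformization, then yields simultaneously that $Y = \operatorname{proj}_{2^{\mathbb N}}(\tilde P)$ is Borel, hence standard, and a Borel map $\sigma \colon Y \to X$ with $\phi(\sigma(y)) = y$. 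This is precisely where smoothness is indispensable: it furnishes the standard Borel index set $Y$ over which one selects, so that $\sigma$ chooses one point per orbit \emph{consistently}; a bare uniformization of $E_G \subseteq X \times X$ would not be orbit-invariant.

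To conclude I would verify that $\bar\phi$ is a Borel isomorphism: preimages of Borel sets are Borel since $\phi$ is Borel, and images of invariant Borel sets are Borel by the projection half of Arsenin--Kunugui applied again, so $X/G$ is standard because $Y$ is. Then $s := \sigma \circ \bar\phi \colon X/G \to X$ is Borel with $\pi \circ s = \mathrm{id}$, since $\sigma(\phi(x)) \in \phi^{-1}(\phi(x)) = Gx$, giving the desired section whose image is a fundamental domain. The main obstacle is obtaining a genuinely \emph{Borel} rather than merely universally measurable section: the Jankov--von Neumann uniformization of the analytic relation $E_G$ only produces a measurable selector, and it is exactly the $\sigma$-compactness of orbits, special to the locally compact setting, that upgrades this to a Borel selection. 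An alternative route, closer to the Effros-theoretic machinery used later in the paper, would be to argue that smoothness forces each orbit to be locally closed and then select a point in each orbit via the Effros open-mapping theorem.
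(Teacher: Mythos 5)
The paper itself contains no proof of this lemma: it is imported verbatim as Theorem A.7 of \cite{Zimmer}, so there is no internal argument to compare yours against, and I assess your proof on its own terms. It is correct. The Becker--Kechris realization step legitimately reduces to a continuous action on a Polish space without changing the Borel sets, the orbits, or the quotient Borel structure; local compactness plus second countability then makes every orbit $K_\sigma$; countable separation of $X/G$ packages into a Borel, orbit-invariant, orbit-separating map $\phi\colon X\to 2^{\mathbb N}$ whose nonempty fibers are single orbits; and Arsenin--Kunugui applies to $\tilde P$ precisely because its sections are single orbits, yielding in one stroke the Borelness of $Y=\phi(X)$ and the Borel selector $\sigma$. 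Your second application of the projection statement, to images of $G$-invariant Borel sets, is exactly what is needed to conclude that $\bar\phi$ is an isomorphism of Borel structures rather than a mere Borel bijection, and hence that $X/G$ is standard; this check is often glossed over. You also correctly locate the two pressure points: smoothness supplies the standard index space over which one selects (uniformizing $E_G\subseteq X\times X$ directly could never produce an orbit-constant choice), and $\sigma$-compactness of orbits is what upgrades the merely universally measurable Jankov--von Neumann selector to a genuinely Borel one.

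As for comparison: the classical argument behind Zimmer's Theorem A.7 (going back to Effros and Mackey) is in the same spirit but uses older technology --- a continuous realization, Effros's theorem that smoothness forces orbits to be locally closed, and then a cross-section theorem for the resulting equivalence relation. The alternative you sketch in your last sentence is essentially that route, and it is the one most consonant with the machinery this paper actually deploys elsewhere (Lemma \ref{Effros}). What your Arsenin--Kunugui packaging buys is economy: the Borel section and the standardness of $X/G$ fall out of a single uniformization theorem, instead of being assembled from separate selection and descriptive-set-theoretic arguments; what the classical route buys is that it needs only the Effros dichotomy circle of ideas, with no appeal to uniformization theorems for $K_\sigma$ sections.
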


The following theorem due to Effros will be our main tool to prove that a given action is smooth:

\begin{lem}[Theorem 2.1.14 on \cite{Zimmer}]
Given $G$ a locally compact second countable group, $X$ a Polish topological space and $G\curvearrowright X$ a continuous action. The following assertions are equivalent:
\begin{enumerate}
    \item For every $x$ in $X$, the map $R_x\colon G/G_x\to G\cdot x$ is an homeomorphism.
    \item The Borel action $G\curvearrowright X$ is smooth.
    \item All orbits are locally closed; this is, they are the intersection of a open and a closed subset.
\end{enumerate}
\label{Effros}
\end{lem}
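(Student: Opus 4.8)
The plan is to prove the equivalences through a cycle, inserting an auxiliary Baire-category condition as a bridge, namely:
\begin{quote}
(4) for every $x\in X$, the orbit $G\cdot x$ is non-meager in its closure $\overline{G\cdot x}$.
\end{quote}
I would then establish $(1)\Rightarrow(3)\Rightarrow(4)\Rightarrow(1)$, which handles the homeomorphism/local-closedness part, and separately fold in smoothness via $(3)\Rightarrow(2)$ and $(2)\Rightarrow(3)$. Throughout I use that $R_x\colon G/G_x\to G\cdot x$, $gG_x\mapsto g\cdot x$, is \emph{always} a continuous bijection: the orbit map $g\mapsto g\cdot x$ is continuous, constant on $G_x$-cosets, and $G_x$ is closed (it is the preimage of $\{x\}$ under a continuous map into the Hausdorff space $X$), so $R_x$ factors and is bijective by orbit--stabilizer. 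Thus (1) amounts precisely to $R_x$ being an \emph{open} map.

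First I would dispatch the easy implications. For $(1)\Rightarrow(3)$: if $R_x$ is a homeomorphism then $G\cdot x\cong G/G_x$, which is locally compact, Hausdorff and second countable because $G$ is. A locally compact subspace of a Hausdorff space is always open in its closure, hence locally closed; this gives (3). For $(3)\Rightarrow(4)$: a locally closed orbit is open in $\overline{G\cdot x}$, and $\overline{G\cdot x}$, being closed in the Polish space $X$, is itself Polish and hence Baire, so a nonempty relatively open set cannot be meager; this gives (4).

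The crux is $(4)\Rightarrow(1)$, the Effros open mapping theorem, and this is the step I expect to demand genuine care. Since $R_x$ is already a continuous bijection, I must show it is open, and by homogeneity it suffices to prove openness at the identity coset: for each neighborhood $U$ of $e$ in $G$ I want $U\cdot x$ to be a neighborhood of $x$ inside $G\cdot x$. Choosing a symmetric open $V$ with $VV\subseteq U$ and covering $G$ by countably many translates $g_nV$ (second countability), the orbit splits as $G\cdot x=\bigcup_n g_nV\cdot x$; condition (4) forces some $g_nV\cdot x$, and hence $V\cdot x$ itself after translating by the homeomorphism $g_n^{-1}$ of $X$, to be non-meager in $\overline{G\cdot x}$. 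Because $G$ is Polish the orbit is analytic and so has the Baire property, whence $V\cdot x$ is comeager in some relatively open piece of $\overline{G\cdot x}$. A Pettis-type argument then transports this comeagerness back to a neighborhood of $x$: picking $v_0\in V$ with $v_0\cdot x$ in that piece and translating by $v_0^{-1}\in V$ shows $U\cdot x\supseteq v_0^{-1}V\cdot x$ is comeager near $x$, and the usual intersection-of-two-comeager-translates bookkeeping upgrades ``comeager near $x$'' to ``contains a relative neighborhood of $x$.'' Controlling these products carefully is the only nonformal part of the whole proof.

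Finally I would tie in smoothness. For $(3)\Rightarrow(2)$, I would use the Borel, $G$-invariant map $x\mapsto \overline{G\cdot x}$ into the Effros Borel space of closed subsets of $X$: when orbits are locally closed they are open and dense in their closures, so two orbits with the same closure would be disjoint open dense subsets of a Baire space and hence must coincide; thus this map is injective on the orbit space, and pulling back a countable separating family from the standard Effros Borel space yields one for $X/G$, i.e.\ countable separation. For the converse $(2)\Rightarrow(3)$ I argue by contraposition: if some $G\cdot x$ is not locally closed, then by the dichotomy already proved ($(4)\Leftrightarrow(1)\Rightarrow(3)$) it is meager yet dense in the Polish $G$-space $\overline{G\cdot x}$. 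A dense orbit makes the restricted action topologically transitive, so every invariant Borel set is meager or comeager (generic ergodicity); this forces any orbit-separating Borel map to be constant on a comeager set, which would place a comeager set inside a single meager orbit --- a contradiction. Hence $\overline{G\cdot x}/G$, and a fortiori $X/G$, fails to be countably separated, so (2) fails. This closes the equivalence, with the Baire-category/Pettis step of $(4)\Rightarrow(1)$ as the decisive obstacle.
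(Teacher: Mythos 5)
First, there is nothing in the paper to compare against: the lemma is quoted verbatim as Theorem 2.1.14 of Zimmer's book (Effros's theorem) and never proved internally, so your proposal must be judged on its own. Its architecture --- inserting condition (4) ``$G\cdot x$ is non-meager in $\overline{G\cdot x}$'', proving $(1)\Rightarrow(3)\Rightarrow(4)\Rightarrow(1)$, then $(3)\Rightarrow(2)$ via the Effros Borel space and $(2)\Rightarrow(3)$ via generic ergodicity --- is the standard descriptive-set-theoretic treatment, and the easy parts are right: $(1)\Rightarrow(3)$ via local compactness of $G/G_x$, $(3)\Rightarrow(4)$ via Baireness of $\overline{G\cdot x}$, and $(3)\Rightarrow(2)$ is essentially complete as sketched (the map $x\mapsto\overline{G\cdot x}$ is Borel because the $G$-saturation of an open set is open, and it is orbit-injective by the two-disjoint-dense-open-sets argument).

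The step that would fail as written is the decisive one, $(4)\Rightarrow(1)$. Your reduction to ``$U\cdot x$ contains a comeager subset of a relative neighborhood $N$ of $x$ in $\overline{G\cdot x}$'' is correct, but the concluding ``intersection-of-two-comeager-translates bookkeeping'' does not work with the natural choices of translates. If, for a point $y=h\cdot x\in N$ that you want to capture, you intersect $V\cdot x$ with sets like $h^{-1}V\cdot x$ or $v_0hv_0^{-1}V\cdot x$, the nonempty intersection only yields a relation of the form $h\in V\,G_x\,V$, with the stabilizer sandwiched in the middle; since $G_x$ does not fix the points of $V\cdot x$, this gives no control on $h\cdot x$, and you cannot conclude $y\in U\cdot x$. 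The repair stays inside your framework but needs one further idea: the local comeagerness transfers to every orbit point, because $V\cdot y=h\bigl((h^{-1}Vh)\cdot x\bigr)$, so applying your statement at $x$ to the conjugated neighborhood $h^{-1}Vh$ and pushing forward by the homeomorphism $h$ (which preserves $\overline{G\cdot x}$) shows $V\cdot y$ is comeager in some relative neighborhood $N'$ of $y$. Now intersect $V\cdot x$ and $V\cdot y$: both are comeager in the nonempty relatively open set $N\cap N'$ (it contains $y$), so they meet, giving $v_1\cdot x=v_2\cdot y$ and hence $y=v_2^{-1}v_1\cdot x\in VV\cdot x\subseteq U\cdot x$, with the stabilizer eliminated. (The textbook alternative is a successive-approximation argument using a complete metric on $G$, as in the open mapping theorem; for lcsc $G$ a proper left-invariant metric makes this clean.)

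There is also a smaller gap in $(2)\Rightarrow(3)$: the comeager set $C$ you extract from a countable separating family does lie in a single orbit, but you assert without justification that this orbit is meager --- a priori it need not be $G\cdot x$. You must rule out a comeager orbit in $Y=\overline{G\cdot x}$: by the pointwise cycle already proved, a comeager orbit would be non-meager in its closure $Y$, hence locally closed, hence open and dense in $Y$; its complement would then be closed and nowhere dense, yet it contains the dense set $G\cdot x$, a contradiction. With that observation every orbit in $Y$ is meager, and $C$ comeager inside a meager orbit is absurd, which closes the argument.
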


We have as well the following lemma that can be considered as a weaker "topological" version of the previous lemma:
\begin{lem}
    Given $\Om$ a locally compact second countable space, let's consider a proper cocompact continuous action $G\act \Om$; this is an action such that $p\colon G\times \Om \to \Om\times \Om$ given by $p(g,x)= (x,g\cdot x)$ is proper and such that there exists a compact subset $K$ of $\Om$ such that $G\cdot K= \Om$, we then have that this action is smooth and that $\Om/G$ is compact.
    \label{properaction}
\end{lem}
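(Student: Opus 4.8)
The plan is to reduce the smoothness assertion to the Effros criterion (Lemma \ref{Effros}) by proving that every orbit is closed, and to deduce compactness of the quotient directly from cocompactness. First I would record the preliminary observation that a locally compact second countable Hausdorff space is metrizable and in fact Polish, so that $\Om$ meets the standing hypotheses of Lemma \ref{Effros}; it then suffices to verify any one of that lemma's three equivalent conditions, and I will aim for condition $(3)$, namely that all orbits are locally closed.

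The heart of the argument is to show that each orbit $G\cdot x$ is in fact closed. Fix $x\in\Om$ and suppose $g_n\cdot x\to y$; since $\Om$ is metrizable it is enough to reason with sequences. The set $C=\{(x,g_n\cdot x):n\in\mathbb N\}\cup\{(x,y)\}$ is compact, being a convergent sequence together with its limit, and each point $(g_n,x)$ satisfies $p(g_n,x)=(x,g_n\cdot x)\in C$, so all the $(g_n,x)$ lie in the compact set $p^{-1}(C)$ (here properness of $p$ is used). Extracting a convergent subsequence $(g_{n_k},x)\to (g,x)$ in $p^{-1}(C)\subseteq G\times\Om$ and invoking continuity of the action gives $g_{n_k}\cdot x\to g\cdot x$; comparing with $g_{n_k}\cdot x\to y$ forces $y=g\cdot x\in G\cdot x$. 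Hence every orbit is closed, a fortiori locally closed, so condition $(3)$ of Lemma \ref{Effros} holds and the action is smooth.

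For the quotient, the projection $\pi\colon\Om\to\Om/G$ is continuous, and cocompactness provides a compact $K$ with $G\cdot K=\Om$. Consequently every $G$-orbit meets $K$, which means $\pi(K)=\Om/G$; as the continuous image of a compact set, $\Om/G$ is compact. One may moreover note that properness of $p$ forces its image, the orbit equivalence relation, to be closed in $\Om\times\Om$, and since $\pi$ is open this yields that $\Om/G$ is Hausdorff, so the compactness statement is meaningful in the strong sense.

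The step I expect to be the main obstacle is the closedness of orbits, and more precisely making the compactness extraction fully rigorous: one must package the convergent sequence $g_n\cdot x$ into a genuine compact subset of $\Om\times\Om$ whose $p$-preimage simultaneously captures all the group elements $(g_n,x)$, and then transport convergence obtained in $G$ back to the orbit through continuity of the action. Once this is done and Lemma \ref{Effros} is invoked, the remaining verifications are formal.
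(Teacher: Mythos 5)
Your proof is correct and follows essentially the same route as the paper: both establish smoothness by verifying condition (3) of Lemma \ref{Effros} through closedness of orbits (a consequence of properness of $p$), and both obtain compactness of $\Om/G$ by noting that $\pi(K)=\Om/G$ is the continuous image of a compact set. The only difference is cosmetic: where the paper cites the fact that a proper map into a locally compact space is closed, you prove the needed special case by hand via a sequential-compactness extraction, which is legitimate since $\Om$ and $G\times\Om$ are metrizable.
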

\begin{proof}
    Let's consider any $x$ in $\Om$, then since $p\colon G\times \Om \to \Om\times \Om$ is proper and $\Om$ is locally compact, then it is closed, which implies that $p(G\times\{x\})=\{x\}\times G\cdot x$ is closed, so the orbits are closed which by the lemma \ref{Effros} implies that the action is smooth.
    Moreover since the action is proper, we have that $\Om/G$ is a Haussdorff space and if we denote by $\pi \colon \Om \to \Om/G$ the continuous projection, we have that $\pi(K)= \Om/G$ which implies that $\Om/G$ is a compact Hausdorff space.
\end{proof}

Now given a smooth Borel action $G\act \Om$ and a fixed fundamental domain $\df_G$, there is a natural identification that will be denoted by $i_G\colon G\times \df_G \to \Om$ given by $i_G(g,x)=g\cdot x$. Note that $i_G$ is an isomorphism of measurable spaces.






We recall now the following useful lemma from \cite{Koivisto}:
\begin{lem}[Lemma 2.14 on \cite{Koivisto}]
    Given $G$ be a locally compact second countable group, $\la_G$ a fixed left-invariant Haar measure  and $\mathcal F$ a standard Borel space, let's consider the natural action of $G$ on $G\times \mathcal F$. We then have the following:
    \begin{enumerate}
        \item If $[\eta]$ is a probability measure class on $\df$ and $[\mu]$ is a $G$-invariant class of a $\sigma$-finite measure on $G\times \df$ such that it projects to $\eta$ under the projection $p_2\colon G \times \df \to \df$, we then have that $[\mu]= [\la_G\tp \ \eta]$
        \item If $\eta$ is a probability on $\df$ and $\mu$ is a $G$-invariant $\sigma$-finite measure on $G\times \df$ such that $[(p_2)_* \mu]=[\eta] $, this then implies that, there exists a measurable function $b\colon \df \to [0,\infty]$ such that $\mu = \la_G \tp b\eta$
        
    \end{enumerate}
    \label{action1}
\end{lem}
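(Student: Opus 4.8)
The plan is to handle both statements through the measure-disintegration of $\mu$ along the projection $p_2\colon G\times\df\to\df$, using crucially that the natural action $G\act G\times\df$ moves only the first coordinate and therefore preserves every fibre $p_2^{-1}(x)\cong G$ as well as the base measure on $\df$.

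For item (1), I would fix a $\sigma$-finite representative $\mu$ of the class, set $\nu:=(p_2)_*\mu$ (which is $\sigma$-finite and satisfies $\nu\sim\eta$), and disintegrate $\mu=\int_\df\mu_x\,d\nu(x)$ into fibre measures $\mu_x$ on $G$. Since $p_2\circ g=p_2$ for every $g\in G$, the pushforward $g_*\mu$ disintegrates over the same base $\nu$ with fibres $g_*\mu_x$; hence the hypothesis $g_*\mu\sim\mu$ together with the essential uniqueness of the disintegration gives, for each fixed $g$, that $g_*\mu_x\sim\mu_x$ for $\nu$-almost every $x$. The delicate point is to exchange the quantifiers. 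Considering the Borel set $N=\{(g,x)\colon g_*\mu_x\not\sim\mu_x\}\subseteq G\times\df$, the previous step says every $g$-slice of $N$ is $\nu$-null, so $(\la_G\tp\nu)(N)=0$ by Fubini, and reading the same identity fibrewise shows that for $\nu$-almost every $x$ the set $S_x=\{g\in G\colon g_*\mu_x\sim\mu_x\}$ is $\la_G$-co-null. Now $S_x$ is a subgroup of $G$, and a measurable co-null subgroup of a locally compact group must be all of $G$ (Steinhaus), so $\mu_x$ is genuinely quasi-invariant under left translation for almost every $x$. By the uniqueness of the Haar measure class for the transitive left-translation action of $G$ on itself, this forces $[\mu_x]=[\la_G]$ for $\nu$-almost every $x$, and reassembling the disintegration yields $[\mu]=[\la_G\tp\nu]=[\la_G\tp\eta]$.

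For item (2) I would deduce the statement from item (1). By (1) we have $[\mu]=[\la_G\tp\eta]$, and since $\mu$ is $\sigma$-finite the Radon--Nikodym theorem provides a Borel density $F\colon G\times\df\to[0,\infty]$ with $\mu=F\cdot(\la_G\tp\eta)$. Both $\mu$ and $\la_G\tp\eta$ are genuinely $G$-invariant, so from $(g_0)_*\mu=\mu$ and $(g_0)_*(\la_G\tp\eta)=\la_G\tp\eta$ one gets $F(g_0^{-1}g,x)=F(g,x)$ for $(\la_G\tp\eta)$-almost every $(g,x)$, for every $g_0\in G$. Running exactly the same Fubini-and-Steinhaus exchange as above shows that, for $\eta$-almost every $x$, the function $g\mapsto F(g,x)$ is invariant under a co-null set of left translations, hence under all of them, hence $\la_G$-almost everywhere equal to a constant $b(x)$ by ergodicity of the left-translation action of $G$ on itself; measurability of $x\mapsto b(x)$ comes from the measurability of $F$. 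Therefore $F(g,x)=b(x)$ almost everywhere and $\mu=b(x)\,(\la_G\tp\eta)=\la_G\tp b\eta$, as desired.

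The main obstacle in both parts is the same mechanism, namely upgrading the ``for every $g$, almost every $x$'' invariance produced by uniqueness of disintegration to an ``almost every $x$, for every $g$'' statement; this is precisely where the second countability of $G$ (through Fubini on $G\times\df$) and the fact that a co-null measurable subgroup equals $G$ are indispensable, and where they must be combined with the uniqueness of Haar, used as a measure class in (1) and through the density computation in (2). I expect the only genuinely routine but error-prone bookkeeping to be the measurability of the sets and fibre maps involved and the $\sigma$-finiteness of $\nu=(p_2)_*\mu$, both of which are standard once the disintegration theorem is set up on the standard Borel space $G\times\df$.
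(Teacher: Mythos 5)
First, a point of comparison: the paper itself contains \emph{no} proof of this lemma --- it is imported verbatim as Lemma 2.14 of \cite{Koivisto} and used as a black box --- so your argument can only be measured against the standard proof of such statements, not against anything in this text. Measured that way, your proposal is essentially correct and follows the expected architecture: disintegrate over the base, use essential uniqueness of the disintegration to get fibrewise (quasi-)invariance for each fixed $g$, exchange quantifiers by Fubini, upgrade the co-null set of good translations to all of $G$ via the subgroup-plus-Steinhaus observation, and identify the fibres by uniqueness of the Haar measure class; for (2), the Radon--Nikodym density plus ergodicity of the translation action of $G$ on itself correctly yields a density depending only on $x$. Note that (2) also admits a slightly shorter direct route that does not pass through (1): genuine invariance plus uniqueness of disintegration gives $g_*\mu_x=\mu_x$ for almost every $x$ and, after the same Fubini--Steinhaus step, for \emph{all} $g$; then each $\mu_x$ is a left-invariant $\sigma$-finite measure on $G$, hence $\mu_x=b(x)\la_G$ by uniqueness of Haar measure, which produces the exact equality $\mu=\la_G\otimes b\eta$ without ever discussing densities. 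Your route buys a reuse of (1); the direct route buys fewer moving parts.

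Two caveats, one of which is an actual error as written. First, your parenthetical claim that $\nu=(p_2)_*\mu$ is $\sigma$-finite is false in general: already for $\mu=\la_G\otimes\eta$ with $G$ non-compact one has $(p_2)_*\mu(A)=\infty$ for every $A$ with $\eta(A)>0$, so the disintegration over $\nu$ is ill-posed as stated. The fix is immediate --- since the hypothesis concerns measure classes, replace $\mu$ by an equivalent probability measure, or disintegrate over $\eta$ itself --- but the step must be rewritten. Second, the measurability of $N=\{(g,x)\colon g_*\mu_x\not\sim\mu_x\}$, which you need before Fubini can be invoked, is more than routine bookkeeping: one has to know that absolute continuity is a Borel relation on the space of (suitably normalized) measures on a standard Borel space, for instance via the $\epsilon$--$\delta$ characterization of absolute continuity tested on a countable generating algebra, together with Borel measurability of $x\mapsto\mu_x$ and of $(g,x)\mapsto g_*\mu_x$. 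This is true and standard, but it is the one step in your sketch that genuinely requires an argument; flagging it as error-prone bookkeeping understates it slightly. With these two repairs your proof is complete.
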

It is easy to see that this implies the following lemma:
\begin{lem}
 \label{action2} 
Given $G$ a locally compact second countable group, $\Om$ a Polish space, a free smooth action of $G$ on $\Om$ with fundamental domain $\df$ and $\mu$ a $G$-invariant $\sigma$-finite measure on $\Om$, then there exists a measure $\nu$ on $\df$ such that if we denote $i_G\colon G\times \df \to \Om$ the natural map coming from the action, we have that:
\begin{equation*}
  (i_G)_*(\la_G \tp \nu)= \mu. 
\end{equation*}
 \end{lem}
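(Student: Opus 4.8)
The plan is to transport the problem to the product $G \times \df$ via the measurable isomorphism $i_G$ and then invoke the second part of Lemma \ref{action1}. Since the action is free and smooth, $i_G \colon G \times \df \to \Om$ is a Borel isomorphism that intertwines the given $G$-action on $\Om$ with the translation action $g\cdot(h,x) = (gh,x)$ on $G\times\df$. Hence I would define $\tilde\mu := (i_G^{-1})_*\mu$, the measure on $G\times\df$ corresponding to $\mu$. Because $i_G$ is an isomorphism and is $G$-equivariant, $\tilde\mu$ is again $\sigma$-finite and $G$-invariant, so it is exactly of the type to which Lemma \ref{action1} applies; it then suffices to produce a measure $\nu$ on $\df$ with $\tilde\mu = \la_G \tp \nu$ and to push this equality forward by $i_G$.

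To feed $\tilde\mu$ into Lemma \ref{action1}(2) I first need a probability measure $\eta$ on $\df$ with $[(p_2)_*\tilde\mu] = [\eta]$. Since $G \times \df$ is standard Borel and $\tilde\mu$ is $\sigma$-finite, there is a probability measure $P$ on $G\times\df$ equivalent to $\tilde\mu$ (decompose the space into countably many pieces of finite positive measure and reweight). I would then set $\eta := (p_2)_* P$, a probability on $\df$. The key verification is that $[(p_2)_*\tilde\mu] = [\eta]$: for a Borel set $A \subseteq \df$ one has $(p_2)_*\tilde\mu(A) = \tilde\mu(G\times A)$ and $\eta(A) = P(G\times A)$, and since $\tilde\mu \sim P$ the set $G\times A$ is $\tilde\mu$-null if and only if it is $P$-null; thus the two pushforwards have the same null sets, i.e.\ they are mutually absolutely continuous.

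With such an $\eta$ in hand, Lemma \ref{action1}(2) yields a measurable function $b\colon \df \to [0,\infty]$ with $\tilde\mu = \la_G \tp b\eta$. Setting $\nu := b\eta$ gives $\tilde\mu = \la_G \tp \nu$, and applying $i_G$ gives $(i_G)_*(\la_G \tp \nu) = (i_G)_*\tilde\mu = \mu$, as desired. The only genuinely delicate point, and the one I would expect to be the main obstacle, is the passage through the measure-class hypothesis of Lemma \ref{action1}(2): namely guaranteeing that $(p_2)_*\tilde\mu$ admits an equivalent probability measure. This is handled precisely by the $\sigma$-finiteness of $\tilde\mu$ (to produce $P$) together with the elementary transfer of equivalence under $p_2$ described above; everything else is bookkeeping about the equivariance and Borel-isomorphism properties of $i_G$.
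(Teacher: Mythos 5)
Your proof is correct and follows essentially the same route as the paper's: transport $\mu$ to $G\times\df$ via $i_G$, pick a probability $\eta$ in the measure class of the pushforward under $p_2$, and apply Lemma \ref{action1}(2) to get $\nu = b\eta$. The only difference is that you explicitly construct $\eta$ (as $(p_2)_*P$ for a probability $P$ equivalent to $(i_G^{-1})_*\mu$), a detail the paper's one-line proof leaves implicit.
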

\begin{proof}
Consider $\eta$ a probability in the same measure class as $(p_2)_* (i_G)^*\mu$, and define $\nu$ as $b.\eta$ where $b$ is the function coming from the lemma \ref{action1}.
\end{proof}

To finish this subsection, we cite the following result from \cite{AdamsStuck}, needed to obtain free actions out of non-free actions:
\begin{lem}[Proposition 5.3 on \cite{AdamsStuck}]
    Given a locally compact second countable group $G$, there exists a compact metrizable topological space $X$ and $G\curvearrowright X$ a continuous free action. 
    \label{free}
\end{lem}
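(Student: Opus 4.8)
The plan is to produce such an $X$ in two stages: first obtain \emph{some} (possibly very large) compact Hausdorff $G$-space on which the action is free, and then cut it down to a compact \emph{metrizable} one without destroying freeness, using that $G$ is second countable. The reason for this two-step approach is that the naive compactifications of $G$ (one-point compactification, horofunction boundary, products of these, etc.) all tend to acquire a point ``at infinity'' with a large stabilizer, so freeness is the genuinely delicate feature and should be secured first on an abstract space and only afterwards transported to a metrizable model.

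For the first stage I would invoke Veech's theorem: the left translation action of a locally compact group $G$ on its greatest ambit $S(G)=\mathrm{spec}(LUC(G))$, the Gelfand spectrum of the algebra of bounded left uniformly continuous functions on $G$, is free. By definition of $LUC(G)$ the map $g\mapsto L_g f$ is continuous for the sup norm, so $G\act S(G)$ is a continuous action on a compact Hausdorff space in which every non-identity element acts without fixed points; its only defect is that $S(G)$ is far from metrizable.

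For the second stage I would descend to a metrizable factor while keeping control of freeness. Fix a compact exhaustion $G=\bigcup_n K_n$ and a neighbourhood basis $\{V_n\}$ of the identity with $\bigcap_n V_n=\{e\}$, and set $L_n=K_n\cap(G\setminus V_n)$, a compact subset of $G\setminus\{e\}$ with $\bigcup_n L_n=G\setminus\{e\}$. Since $G\act S(G)$ is free, the continuous map $L_n\times S(G)\to S(G)\times S(G)$, $(g,x)\mapsto(gx,x)$, avoids the diagonal, so by compactness I can find a \emph{finite} family $\mathcal F_n\subseteq LUC(G)=C(S(G))$ such that for every $(g,x)\in L_n\times S(G)$ some $f\in\mathcal F_n$ has $f(gx)\neq f(x)$. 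Let $\mathcal A_0=\bigcup_n\mathcal F_n$, a countable subset of $LUC(G)$, and let $\mathcal A$ be the closed unital $G$-invariant subalgebra it generates. Because $g\mapsto L_g f$ is norm continuous and $G$ is separable, the orbit $\{L_g f : g\in G,\ f\in\mathcal A_0\}$ is norm separable, hence $\mathcal A$ is a separable C*-algebra; therefore $Y:=\mathrm{spec}(\mathcal A)$ is compact \emph{metrizable} and carries a continuous $G$-action, and the inclusion $\mathcal A\hookrightarrow LUC(G)$ induces a $G$-equivariant continuous surjection $\pi\colon S(G)\to Y$.

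The last point, and the one I expect to be the main obstacle, is checking that $G\act Y$ is free; this is the only place where the careful choice of $\mathcal A_0$ is used. If $g\neq e$ fixed some $y=\pi(x)\in Y$, then equivariance would give $\pi(gx)=\pi(x)$, so every $f\in\mathcal A$ agrees at $gx$ and $x$; but $g$ lies in some $L_n$, and the witnessing family $\mathcal F_n\subseteq\mathcal A$ contains an $f$ with $f(gx)\neq f(x)$, a contradiction. Hence $X=Y$ works. I note an alternative, more hands-on route that avoids Veech's theorem: build a single \emph{hyper-aperiodic} function in $LUC(G)$ directly and take the orbit closure of the associated point; there the crux becomes the existence of such a function, which is precisely the continuous analogue of the Gao--Jackson--Seward construction of free subshifts, and is technically heavier than the reduction argument above.
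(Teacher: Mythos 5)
Your proposal is correct, but it takes a genuinely different route from the paper, which in fact offers no argument at all for this lemma: it simply cites Proposition 5.3 of \cite{AdamsStuck}. Your two-stage scheme is sound. Freeness is secured by Veech's theorem on the greatest ambit $S(G)=\mathrm{spec}(\mathrm{LUC}(G))$ --- which for non-compact $G$ is never metrizable, so your reduction step is genuinely needed --- and the descent to a metrizable factor works as you describe: the sets $L_n$ are compact and exhaust $G\setminus\{e\}$; joint continuity of the ambit action makes $(g,x)\mapsto(gx,x)$ continuous on $L_n\times S(G)$, so compactness yields the finite separating families $\mathcal{F}_n$; norm-continuity of translation on $\mathrm{LUC}(G)$ together with separability of $G$ gives separability of the generated invariant subalgebra $\mathcal{A}$; and the witness argument on $Y=\mathrm{spec}(\mathcal{A})$ correctly rules out fixed points. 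Two details deserve a word: take the separating functions real-valued (or close $\mathcal{A}$ under conjugation) so that $\mathcal{A}$ is a $C^*$-subalgebra and Gelfand duality applies, and note that $\pi\colon S(G)\to Y$ is surjective (it is dual to an injective unital $*$-homomorphism of commutative unital $C^*$-algebras) and that joint continuity of the $G$-action passes to factors of compact $G$-flows by lifting convergent nets; with these remarks every point of $Y$ is reached by the freeness argument. The trade-off relative to the paper's citation: your argument is short and conceptually transparent but uses Veech's theorem as a heavy black box, whereas the construction of Adams and Stuck invoked by the paper is self-contained; for the purposes of this paper (producing the free factor $X$ in the coupling $\Om=X\times\Om_0$), either source of the lemma suffices, and your metrizable-factor reduction is a technique worth recording in its own right.
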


\subsection{Quantitative Measure Couplings of locally compact groups}


In this subsection we will extend the notion of quantitative measure subgroup coupling given in \cite{Romain} to the locally compact setting. Moreover, we will always consider $H,G$ two locally compact compactly generated groups with $S_H,S_G$ two symmetric compact generating subsets of $H,G$ respectively. Let's denote by $|\cdot|_{H},|\cdot|_{G}$ the word length on $H,G$ with respect to $S_H,S_G$ respectively. This induces compatible metrics $d_H,d_G$ on $H,G$ respectively that are left-invariant, proper and quasi-isometric to the word length in each respective group.
\begin{Convention}
    In this subsection, following \cite{Romain}, we will denote by $*$ the smooth actions and by $\cdot$ any other actions.
\end{Convention}
\begin{defi}
 We say that $(\Om,\mu)$ is a \textbf{measure subgroup coupling} from $H$ to $G$ if we have two smooth commuting measure preserving actions from $H,G$ on $(\Om,\mu)$ with fundamental domains $\df_H,\df_G$ respectively, such that:
\begin{enumerate}
    \item The $G$-action is free;
    \item The $H$-action is free; 
    \item There is a $\sigma$-finite measure $\nu_H$ on $\df_H$ and a finite measure $\nu_G$ on $\df_G$ such that, the following maps are isomorphisms of measured spaces 
    \begin{align*}
        i_G&\colon (G\times \df_G,\mu_G \otimes \nu_G) \to (\Om,\mu)  \\
        i_H&\colon (H\times \df_H,\mu_H \otimes \nu_G) \to (\Om,\mu).
    \end{align*}
    
\end{enumerate}
If there exists a measure subgroup coupling from $H$ to $G$ we say that $H$ is a measure subgroup of $G$. 

\label{coupling}

Let's define as well the \textbf{induced action} of $G$ on $\df_H$ as the natural action of $G$ over $\Om/H$ viewed under the identification $s_H\colon \Om/H\to\df_H$. In a similar manner, we define as well the induced action of $H$ on $\df_G$, and given any $g$ in $G$ and any $x$ in $\df_H$ we will denote by $g\cdot x$ this induced action. On the other hand the $G,H$-actions on $\Om$ will be denoted by $g*x,h*x$ for $g$ in $G$,$h$ in $H$ and $x$ in $\Om$.
    
Let's define the \textbf{cocycle} $c_G\colon H\times \df_G \to G$ by the following:
\begin{equation*}
    h*x=c_G(h,x)*(h\cdot x)
\end{equation*}

\end{defi} 

Now, the definition of quantitative measure subgroups(cf. \cite[section 2.4, page 16]{Romain}) becomes natural to extend, which we define as:
\begin{defi}(Integrability of cocycles)
For any $p$ in $[1,\infty]$, we say that $(\Om,\mu)$ is an \textbf{$L^p$-measure subgroup coupling} from $H$ to $G$ if $(\Om,\mu)$ is a measure subgroup coupling from $H$ to $G$ and also we have that:
\begin{equation*}
    \sup_{h\in S_H} \lVert|c_G(h,\cdot)|_G\rVert_p  < \infty
\end{equation*}
where $|c_G(h,\cdot)|_G$ belongs to $L^0(\df_G,\nu_G)$, since $\nu_G$ is a finite measure, we have that $L^p$ implies $L^q$ for every $p>q$. Note that this is the same as saying that for any $h$ in $H$, we have that $|c_G(h,\cdot)|_G$ belongs to $L^p(\df_G,\nu_G)$(cf.\cite[Appendix A.2]{BaderFurmanSauer})

Moreover, for a function $\varphi\colon (0,\infty)\to (0,\infty)$, such that $\varphi$ and $t\to \frac{t}{\varphi(t)}$ are non-decreasing, we say that $(\Om,\mu)$ is a $\varphi$-measure subgroup coupling from $H$ to $G$ if $(\Om,\mu)$ is a measure subgroup coupling from $H$ to $G$ such that:
\begin{equation*}
    \sup_{h\in S_H} \varphi(|c_G(h,\cdot)|_G) < \infty
\end{equation*}
Note that this definition doesn't depend on the choice of the compact generating set, since $t\mapsto \frac{t}{\varphi(t)}$ is non-decreasing, we have that $\varphi(ct)\leq c\varphi(t)$ for $c>1$ and $t>0$.
Moreover if $\varphi$ is subadditive, this is the same as saying that $\varphi(||c_G(h,\cdot)||)$ is finite for all $h$ in $H$ without any uniform condition on the bound by the same argument in \cite[Appendix A.2]{BaderFurmanSauer}. In particular, this is the case for $\varphi(t) = t^{p}$ with $0<p<1$.
%

\end{defi}

\subsection{Regular Embeddings and Discretizations}
In order to pass from the discrete setting to the non-discrete one, it is usually helpful to define the following notions:
\begin{defi}
    Given $(X,d)$ a metric space and a parameter $s>0$, we say that $Y\subset X$ is $s$-\textbf{discrete} if $d(x,y)\geq s$ for all $x,y$ in $Y$ such that $x\neq y$. We say that $Y\subset X$ is $s$-\textbf{dense} if for all $x$ in $X$, there exists $y\in Y$ such that $y \in B(x,s)$. We say that $Y\subset X$ is a $s$-\textbf{discretization} if it is a maximal $s$-discrete subset.
\end{defi}

By the maximality condition in the definition of $s$-discretization, we can see that an $s$-discretization is always $s$-dense as well. More precisely we have that:
\begin{lem}
    \label{skeleton}
    Given any set $A$ in a metric space $(X,d)$, consider $C$ a maximal $r$-discrete subset of $A$. We then have that:
    \begin{equation*}
        A\subset [C]_r=\bigcup_{c\in C} B(c,r)
    \end{equation*}
\end{lem}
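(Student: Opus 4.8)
The plan is to argue by contradiction, exploiting the maximality of $C$ as an $r$-discrete subset of $A$. The statement $A\subset [C]_r$ unwinds to the assertion that every point of $A$ lies within distance strictly less than $r$ of some point of $C$, so I would fix an arbitrary $a\in A$ and aim to produce a witness $c\in C$ with $d(a,c)<r$.

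First I would handle the trivial case where $a\in C$: then $d(a,a)=0<r$ gives $a\in B(a,r)\subset [C]_r$ immediately. So I may assume $a\notin C$ and argue toward a contradiction by supposing that $a\notin [C]_r$. The negation of membership in $[C]_r$ says precisely that $a\notin B(c,r)$ for every $c\in C$, which by definition of the open ball means $d(a,c)\geq r$ for all $c\in C$.

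The key step is then to observe that $C\cup\{a\}$ is again an $r$-discrete subset of $A$. Indeed $a\in A$ so the set is contained in $A$, and for any two distinct points of $C\cup\{a\}$ we check the separation: if both lie in $C$ the bound $d(x,y)\geq r$ holds because $C$ is $r$-discrete, while if one of them is $a$ and the other is some $c\in C$, the bound $d(a,c)\geq r$ is exactly our standing assumption. Since $a\notin C$, the set $C\cup\{a\}$ strictly contains $C$, contradicting the maximality of $C$ among $r$-discrete subsets of $A$. Hence no such $a$ exists, every point of $A$ lies in some $B(c,r)$, and $A\subset [C]_r$ follows.

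There is no genuine obstacle here: the only point requiring a moment of care is the bookkeeping in the case $a\in C$ (handled separately so that the strict enlargement $C\subsetneq C\cup\{a\}$ is valid) and the correct reading of the open ball $B(c,r)=\{x:d(x,c)<r\}$, so that failure of $r$-density translates into the clean inequality $d(a,c)\geq r$ that meshes with the $r$-discreteness condition. The argument is purely formal once these definitions are lined up.
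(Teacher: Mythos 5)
Your proof is correct and follows essentially the same route as the paper: assume some $a\in A$ lies outside $[C]_r$, deduce $d(a,c)\geq r$ for all $c\in C$, and contradict maximality by adjoining $a$ to $C$. The only difference is your extra bookkeeping (the separate case $a\in C$, which is in fact automatic since $a\notin[C]_r$ already forces $a\notin C$), so the argument matches the paper's.
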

\begin{proof}
    Let us suppose that there is an $a$ in $A$ not belonging to $[C]_r$, this implies that $d(a,c)\geq r$ for all $c$ in $C$, we can then consider $C\cup a$ as a $r$-discret subset of $A$ that contains $A$ contradicting the maximality of $A$.
\end{proof}

\begin{defi}
Given a metric space $(X,d)$ we say it is \textbf{balanced} if for any two $r_1>r_2>0$, there exists $N$; depending only on $r_1,r_2$, such that we can cover any $r_1$-ball by at most $N$ $r_2$-balls.
\end{defi}

Let's consider the following generalization of regular embeddings of graphs to non-discrete spaces given in \cite{Poincare}:
\begin{defi}[Subsection 1.1, p.4 in \cite{Poincare}]
Consider $X,Y$ two balanced metric spaces, a map $f\colon X \to Y$ is called a \textbf{regular embedding}  if there exists scales $r,s>0$, $L>0$ and $N\in \mathbb N$  such that:
\begin{itemize}
    
    \item $d(f(x),f(x'))\leq Ld(x,x')+L$ for all $x,x'$ in $X$ and
    \item the preimage of each $r$-ball in $Y$ can be covered by at most $N$ $s$-balls on $X$
    
\end{itemize}
It is clear from the definition that quasi-isometries are regular embeddings. Moreover since we are working with balanced metric spaces, there is no importance on the scale we use, that is, if a map $f$ is regular at scales $(r,s)$ it is regular at all scales.
It is also clear that the composition of regular embeddings is also regular. 
\end{defi}

Note as well that for $G$,$H$ locally compact compactly generated groups, $(G,d_G)$ and $(H,d_H)$ are balanced metric spaces; for that reason we can fix the scales used in each group, we will consider $s$-discretizations on $H$ and $3$-discretizations on $G$.
Now that we have fixed the scales on $G,H$ we can define the analog of Definition 4.1 in \cite{Romain}:
\begin{defi}
    Given $(\Om,\mu)$ a measure subgroup coupling from $H$ to $G$, we say that $(\Om,\mu)$ is  \textbf{coarsely $m$ to $1$} if for any $x$ in $\df_G$, the map $c_x\colon  H \to G$ given by $c_x(h)= c_G(h^{-1},x)$ satisfies that the preimage under this map of any ball of radius $3$ in $G$  can be covered but at most $m$ balls of radius $s$ in $H$.
\end{defi}
We will be interested on the following definition as well:
\begin{defi}
    We say that $A\subset (X,d)$ is \textbf{$r$-thick} if it is the union of balls of radius at least $r$. Since each ball of radius at least $r$ can be covered by balls of radius $r$, this is the same as saying that there exists $C$ such that $A=[C]_r$.
\end{defi}

\section{Construction of Measure Subgroup Couplings}

Let's first prove the following proposition:
\begin{prop}
    Let $f\colon H \to G$ be a regular embedding between unimodular locally compact compactly generated groups, if $H$ is amenable, we then have that there exists $L^\infty$-measure subgroup coupling $(\Om,\mu)$ from $H$ to $G$ that is coarsely $m$ to $1$ for some $m$.
\end{prop}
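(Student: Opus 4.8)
The plan is to follow the Bader--Rosendal strategy: realize $\Om$ as an orbit closure of $f$ inside a space of maps $H\to G$ carrying two commuting actions, establish the topological regularity of these actions, and then promote the resulting topological coupling to a measured one using the amenability of $H$. Concretely, I would let $\Om$ be the closure of the set $\{\,(h\mapsto g\,f(hh_0)):g\in G,\ h_0\in H\,\}$ in a suitable space of coarsely Lipschitz maps $H\to G$, where $G$ acts by post-translation $(g*\phi)(h)=g\phi(h)$ and $H$ acts by pre-translation $(h_0*\phi)(h)=\phi(hh_0)$. One checks these are commuting continuous (left) actions and that $f\in\Om$. Since $f$ is $L$-Lipschitz, every orbit map is $(L,L)$-coarsely Lipschitz, so the value $\phi(h)$ is confined to a ball of radius $L|h|_H+L$ once $\phi$ is normalized at a base point; the hypothesis that $(H,d_H)$ and $(G,d_G)$ are balanced, together with the $s$-discretization of $H$ and the $3$-discretization of $G$, is exactly what turns this confinement into local compactness and second countability of $\Om$ via an Arzel\`a--Ascoli-type argument.

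Next I would analyze the two actions separately. For the $G$-action, the coarse-Lipschitz confinement shows that the normalized maps form a compact set, whence the $G$-action is proper and cocompact; by Lemma \ref{properaction} it is therefore smooth and $\Om/G$ is compact, which will eventually supply the \emph{finite} fundamental-domain measure $\nu_G$ demanded in Definition \ref{coupling}. The $H$-action is the delicate point: because $f$ is only a regular embedding, its fibres have bounded multiplicity but possibly unbounded diameter, so the $H$-action is \emph{not} proper and Lemma \ref{properaction} is unavailable. I would isolate its smoothness as a separate statement (Claim \ref{Important}) and prove it through Effros' theorem (Lemma \ref{Effros}), i.e. by showing every $H$-orbit in $\Om$ is locally closed, the bounded-multiplicity/regularity of $f$ (packaged as Proposition \ref{regularitylemma}) being precisely what controls the orbit closure and exhibits it as the intersection of an open and a closed set. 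I expect this to be the main obstacle.

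Having both actions smooth, I would make them free and build the invariant measure. Freeness is arranged by the standard device of replacing $\Om$ with $\Om\times X$, where $X$ carries a free continuous action of $G\times H$ on a compact metrizable space furnished by Lemma \ref{free}; this changes neither smoothness, nor the properness/cocompactness of the $G$-action, nor the quantitative estimates below, while rendering both actions free. For the measure I would invoke amenability of $H$: the commuting $H$-action descends to the compact space $\Om/G$, so amenability yields an $H$-invariant Borel probability on $\Om/G$, and combining it with the $G$-invariant Haar measure along the fibres produces a $\sigma$-finite measure $\mu$ on $\Om$ invariant under both actions. The isomorphisms $i_G,i_H$ of Definition \ref{coupling} then follow from Lemma \ref{action2}, with $\nu_G$ finite (coming from the probability on the compact quotient) and $\nu_H$ only $\sigma$-finite.

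Finally I would read off the quantitative conclusions directly from $f$. Writing the cocycle $c_G$ explicitly from the construction, $|c_G(h,\cdot)|_G$ is controlled by a quantity of order $L|h|_H+L$ uniformly in $x\in\df_G$, which yields $\sup_{h\in S_H}\lVert |c_G(h,\cdot)|_G\rVert_\infty<\infty$ and hence the $L^\infty$ property. The coarsely $m$-to-$1$ condition amounts to the assertion that, for fixed $x$, the map $h\mapsto c_G(h^{-1},x)$ has the same preimage-covering behaviour as $f$; the defining bound of a regular embedding (preimages of $r$-balls covered by $N$ $s$-balls), together with Lemma \ref{skeleton} turning the discretizations into genuine coverings, then delivers a constant $m$. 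This produces an $L^\infty$-measure subgroup coupling from $H$ to $G$ that is coarsely $m$-to-$1$, as required.
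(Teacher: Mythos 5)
Your overall roadmap matches the paper's proof (which is indeed an adaptation of Bader--Rosendal): orbit closure under two commuting translation actions, properness and cocompactness of the $G$-action, smoothness of the $H$-action as the delicate point via Effros' theorem and a regularity statement, freeness via Lemma \ref{free}, the $H$-invariant probability on the compact quotient $\Om/G$ coming from amenability, Lemma \ref{action2} producing $\nu_H$, and the $L^\infty$ and coarsely $m$-to-$1$ properties read off from the cocycle. However, there is a genuine gap at the very first step, and it is not cosmetic: you take the orbit closure inside a ``space of coarsely Lipschitz maps $H\to G$'' and invoke an Arzel\`a--Ascoli-type argument. A regular embedding $f$ is not assumed continuous (it is only coarsely Lipschitz, with an additive constant $L$), so the translates $h\mapsto gf(hh_0)$ are neither continuous nor equicontinuous, and Arzel\`a--Ascoli does not apply. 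Worse, on any such space of discontinuous maps the pre-translation $H$-action $(h_0*\phi)(h)=\phi(hh_0)$ fails to be continuous (the orbit map $h_0\mapsto \phi(\cdot\, h_0)$ is discontinuous whenever $\phi$ is), so the topological-dynamical machinery you rely on downstream --- orbit closures of a continuous action, Lemma \ref{properaction}, and above all Effros' theorem (Lemma \ref{Effros}), which requires a continuous action on a Polish space --- is unavailable. Finally, even if one works in $G^H$ with the product topology, where Tychonoff (rather than Arzel\`a--Ascoli) does give compactness of the sets of normalized coarse-Lipschitz maps, the resulting orbit closure need not be metrizable or second countable (a coarse-Lipschitz map is not determined by its values on a countable dense subset, precisely because of the additive constant), so the space need not be Polish or standard Borel, and then smoothness, fundamental domains, and Lemma \ref{action2} do not even make sense.

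The missing idea is exactly the device the paper takes from Bader--Rosendal: before taking any orbit closure, replace $f$ by a genuinely Lipschitz map into the locally compact set $\Delta(G)\subset L^1(G,\la_G)$. One fixes discretizations $Y\subset H$ and $Z\subset G$ with $f(Y)\subset Z$, a Lipschitz partition of unity $(\beta_y)_{y\in Y}$ on $H$ subordinate to the balls $B(y,s+1)$, sets $\alpha_z=\sum_{f(y)=z}\beta_y$ and $\zeta(h)=\sum_{z\in Z}\alpha_z(h)\chi_{zB}$; the bounded multiplicity of $f$ on $Y$ makes each $\alpha_z$ Lipschitz, and the regularity of $f$ makes $\zeta(h)$ land in $\Delta(G)$. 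The coupling space is then (a product of Lemma \ref{free}'s space $X$ with) the orbit closure of $\zeta$ in $\Delta(G)^H$, which is locally compact, second countable, and on which both translation actions are continuous; this is what makes every later step of your outline actually run, including Proposition \ref{regularitylemma}, which must be proved for the limit points $\xi$ in the orbit closure of $\zeta$ and not just for $f$ itself, since a general point of $\df_G$ is such a limit. So your plan is structurally the right one, but without the $\Delta(G)$ smoothing the object you build does not support the arguments you want to apply to it.
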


\begin{proof}


Let's fix $Y$  a $s$-discretization of $G$ and $Z$ a $3$-discretization of $H$, let's denote by $\pi\colon G \to Z$ a retraction of $G$ to $Z$, that takes every element $g$ in $G$ to some  $\pi(g)$ in $Z$ with $d(g,\pi(g))\leq 3$. Up to considering $\pi \circ f$ instead of $f$ we can suppose that $f(Y)\subset Z$.

Since $f$ is regular, it's clear as well that for every $z$ in $Z$, the number of elements of $\{y\in Y: f(y)=z\}$ is uniformly bounded by $T$, for some $T>0$.

Now as in \cite{Rosendal} we will obtain a $C$-Lipschitz map $\eta \colon H \to \Delta(G) \subset L^1(G,\la_G)$ where $\Delta(G)$ will be a large-scale analog of $G\times \Delta$ where $\Delta$ is a finite dimensional simplex and we will build our coupling space $(\Omega,\mu)$ with the help of this map. 

More specifically, we can find $(\beta_y\colon H \to [0,1])_{y\in Y}$ a family of $M$-Lipschitz functions for some $M>0$ such that $\supp \beta_y = \overline B(y,s+1)$ for all $y$ in $Y$  and such that for all $h$ in $H$:
\begin{equation*}
    \sum_y \beta_y(h) = 1
\end{equation*}

This is done in \cite{Rosendal}, by considering for all $y$ in $Y$, $\theta_y\colon H \to [0,s+1]$ given by $\theta_y(h) = \max\{0,s+1-d_H(y,h)\}$, it is clear that $\theta_y$ is $1$-Lipschitz and $\theta_y\geq 1$ on $B(y,s)$ and $\theta_y=0 $ outside of $B(y,s+1)$. It follows that $\Theta(h) = \sum_{y \in Y} \theta_y(h)$ is a bounded Lipschitz function with $\Theta\geq 1$. We can get then $\beta_y$ by defining as $\beta_y = \theta_y/\Theta$ for all $y$ in $Y$.
 
Let's define then the family of functions $(\alpha_z)_{z\in Z}\colon H\to [0,1]$ by:
\begin{equation*}
    \alpha_z(h) = \sum_{f(y)=z} \beta_y(h)
\end{equation*}

Since for every $z$ in $Z$, the number of elements of $\{y\in Y: f(y)=z\}$ is uniformly bounded by $T$, we have that $\alpha_z$ is $TM$-Lipschitz for all $z$ in $Z$. For all $h$ in $H$, we denote by $Y_h=\{y\in Y : \beta_y(h) > 0 \}$ and $Z_h =\{z\in Z : \alpha_y(h) > 0 \}$, we clearly have that $Z_h\subset f(Y_h)$ which implies that the diameter of $Z_h$ is bounded by $L(\diam(B(y,s+1)))+L=L(2s+2)+L=R$. Now consider $N$ to be the maximum number of a $3$-discrete subset of $G$ of diameter bounded by $R$.

Let's define then $\eta\colon H \to L^1(G,\la_G) $ given by:
\begin{equation*}
    \eta(h)= \sum_{z\in Z} \alpha_z(h) \chi_{zB} 
\end{equation*}
where $B$ is the unitary closed ball in $G$, and define $\Delta(G)\subset L^1(G,\la_G)$ as:
\begin{equation*}
     \Delta(G)=\left\{\sum_{i=1}^{m} \alpha_i \chi_{z_iB} : 
     \{z_1,\ldots,z_m\} \text{ is } \text{3-discrete}
                                         \text{ of diameter} \leq R,
                                         \sum \alpha_i=1, \alpha_i\geq 0
                                         \right\}
\end{equation*}
Since $Z_h$ has diameter bounded by $R$ it is clear then that $\eta(h)$ belongs to $\Delta(G)$ for all $h$ in $H$.
Furthermore; in Claim 3,p.3 in \cite{Rosendal}, the authors proved the following claim:
\begin{claim}
$\Delta(G)$ is locally compact in the $L^1$-topology in $L^1(G,\mu_G)$. In fact:
\begin{equation*}
    [K,\eps]=\{\xi \in \Delta(G): \langle \xi | \chi_K\rangle\geq \eps\}
\end{equation*}
is compact for every compact set $K\subset G$ and $\epsilon > 0$. Moreover, every compact set is contained in some $[K,\eps]$.
\end{claim}

Let's consider the space of maps $\Delta(G)^H$ equipped with the product topology, endowed with two natural commuting actions of $G$ and $H$ given by:
\begin{align*}
(g*\xi)(h)&=\la(g)\xi(h)\\
(h_1* \xi)(h)&=\xi(h_1^{-1}h).
\end{align*}
where $g$ belongs to $G$, $h,h_1$ belong to $H$, $\xi$ belongs to $\Delta(G)^H$ and $\la$ is the left regular representation of $G$ on $L^1(G,\la_G)$. We then set $\Omega_0$ as the subset of $\Delta(G)^H$ given by $\overline{(G\times H)*\zeta }$.

Note that $\zeta$ is a Lipschitz function, to be more precise:
\begin{align*}
    \left\Vert\zeta(h)-\zeta(h')\right\Vert_1&= \left\Vert\sum_{z\in Z_h\cup Z_{h'}} (\alpha_z(h)-\alpha(h')) \cdot\chi_{zB}\right\Vert_1\\
    &\leq  \sum_{z\in Z_h\cup Z_{h'}} |\alpha_z(h)-\alpha(h')| \cdot \lVert\chi_{zB}\rVert_1\\
    &\leq 2NTMd_H(h,h').
\end{align*}
Since $d_H$ is left-invariant, we have that any $\xi$ in $(G\times H)*\zeta$ is also $2NTM$-Lipschitz. This then implies that any $\xi$ in $\Om_0$ is also $2NTM$-Lipschitz.

Note as well, that for any $h,h'$ in $H$, we have the following:
\begin{align*}
    d_G(\supp(\zeta(h)),\supp(\zeta(h')))&\leq 2+ d_G(Z_h,Z_{h'})\\
    &\leq 2+L(d_H(Y_h,Y_{h'}))+L\\
    &\leq 2+L(d_H(B(h,s+1),B(h',s+1)))+L\\
    &\leq 2+L(2s+2+d_H(h,h'))+L.
\end{align*}
It can be seen as well that the same is true for any $\eta$ in $\Om_0$.
 
We will consider then $\Om=X\times \Om_0$, where $X$ is the compact topological space equipped with a free continuous $G\times H$-action given by the Lemma \ref{free}. This makes that there exists two natural commuting $G,H$-actions on $\Om$ that are now free. By an abuse of notation, we will still by $*$ these two actions on $\Om$.
We claim then that:
\begin{claim}
    $\Om$ is a locally compact space.
\end{claim}
\begin{proof}
    It is clear that we need only to prove that $\Om_0$ is a locally compact topological space. This follows from the same arguments as \cite{Rosendal}, to be more precise, given any $\xi$ in $\Om_0$, if we denote by $K$ the support of $\xi(1_H)$ we then have that:
 \begin{equation*}
        K_{\xi}= \left\{\eta \in \Om_0 : \eta(1_H)\in [K,1/2]\right\}
    \end{equation*}
   is a compact neighbourhood of $\xi$, since for any $\eta$ in $K_\xi$ we have that  $\supp(\eta(1_H))\cap K\neq \emptyset$, which then implies that for any $h$ in $H$ we have that $\supp(\eta(h))\subset [K]_{3(R+2)+L(d_H(1,h))} $ and that $\eta(h)\in [[K]_{3(R+2)+L(d_H(1,h))},1]$. 
\end{proof}

\begin{claim}
$\Om$ is a second countable topological space.
\end{claim}
\begin{proof}
    This follows from the same arguments as \cite{Koivisto}, since $\Delta(G)\subset L^1(G,\la_G)$ is a separable space and $\Om_0\subset C(H,\Delta(G))$ is locally compact and cosmic(see lemma 6.18 and its proof on \cite{Koivisto} for the definition), we have that $\Om_0$ is a locally compact second countable Hausdorff topological space. This implies then that $\Om$ is second countable.   
\end{proof}
 
\begin{claim}
The action $H\act \Om$ is continuous.
\end{claim}
\begin{proof}
    It is only needed to prove that $H\act \Om_0$ is continuous, this follows from the same arguments as in \cite{Rosendal}.
\end{proof}
\begin{claim}
 The action $G\act \Om$ is continuous, proper and cocompact.   
\end{claim}
\begin{proof}
    In order to prove that $G\act \Om$ is continuous, we need only to prove that $G\act \Om_0$ is continuous; this follows from the same arguments as in \cite{Rosendal}.
    
    In order to prove that $G\act \Om$ is proper, we need to prove that $G\act \Om_0$ is proper, let's consider $\tilde K$ a compact subset of $\Om_0\subset \Delta(G)^H$, since the projection onto the $1_H$-coordinate is continuous, we can suppose there exists $K$ a compact subset of $G$ and $\epsilon>0$ such that:
    \begin{equation*}
        \tilde K\subset \{\xi\in \Om_0: \xi(1_H) \in [K,\epsilon]\}
    \end{equation*}
    Now, for any $g$ in $\{g \in G : g\tilde K \cap \tilde K \neq \emptyset\}$ we have that there exists $\xi$ in $\Om_0$ such that $\xi(1_H)\in [K,\epsilon]\cap [gK,\epsilon]$. We thus have that $\supp\ \xi(1_H)\cap K\neq \emptyset$ and $\supp\ \xi(1_H)\cap gK \neq \emptyset$, which allows us to prove that $d_G(1,g)\leq 2\diam(K)+\diam(\supp(\xi(1_H)))\leq 2\diam(K) + R+2$. So we conclude that $G\act \Om$ is proper.
    
    In order to prove that $G\act \Om$ is cocompact, let's consider $B(1_H,2R)$, given any $\xi$ in $\Om_0$ since $\supp\ \xi(1_H)$ has diameter bounded by $R+2$, there exists $g$ in $G$ such that $\supp (g*\xi)(1_H)\subset B(1_H,R+2)$, so if we define:
    \begin{equation*}
        C= \{\xi \in \Om_0: \xi(1_H)\in [B(1_G,R+2),1]\}
    \end{equation*}
    we have that $G*C=\Om_0$ and that $C$ is a compact neighbourhood by Claim $2$. This then implies that $G*(X\times C)= \Om$  

\end{proof}

\begin{claim}
The action $H\act \Om$ is a smooth action.
\label{Important}
\end{claim}
\begin{proof}
    In order to prove this claim, we will need to prove that all $\xi$ in $\Om_0$ will be proper maps, more specifically we have that:
\begin{prop}
For all $\xi$ in $\Om_0$ we have that for any $K$ of diameter bounded by $r$, there exists $N(r)$  only depending on $r$ such that $\xi^{-1}([K,\epsilon])$ can be covered by at most $N(r)$ balls of radius $s$. 
\label{regularitylemma}
\end{prop}
\begin{proof}
    Let's suppose that $\xi_n $ in $(G\times H)* \zeta $ converges to $\xi$ in $\Om_0$, it is possible to see that for each $r>0$ there is a $N(r)$ such that all $\xi_n^{-1}([K,\epsilon/2])$ can be covered by at most $N(r)$ balls of radius $s/2$. 
    Let's consider then $C\subset H$ a $s$-discrete maximal subset of $\xi^{-1}([K,\epsilon])$, suppose that $C$ has cardinality at least $N(r)+1$ and let's denote $F\subset C$ some finite subset of cardinality $N(r)+1$. We then have that since $\langle\chi_K | \xi\rangle \geq \epsilon $ and $\xi_n(c)\to \xi(c)$ for all $c\in F$, there exists $n_c$ such that for all $n\geq n_c$ we have that $\langle\chi_K | \xi_n(c)\rangle \geq \epsilon/2$. If we consider $m = \max_F {n_c}$ we get that $c\in \xi_m^{-1}([K,\epsilon/2])$ for all $c\in F$. Since $F$ is s-discrete we have that $|F|\leq N(r)$ which is a contradiction. Therefore, $C$ is finite and moreover $|C|\leq N(r)$ which implies that $\xi^{-1}([K,\epsilon])$ can be covered by at most $N(r)$ balls of radius $s$ by the Lemma \ref{skeleton}.
    \label{regularity}
\end{proof}
Moreover, since $[K,\eps]$ is a basis of $\Delta(G)$, Proposition \ref{regularity} implies as well that any $\xi$ in $\Om_0$ is proper when we see it as a map from $H$ to $\Delta(G)$. 
Furthermore, for any $(x,\xi)$ in $\Om$ we have that the map:
\begin{equation*}
    R_{(x,\xi)} \colon H \to H*(x,\xi)
\end{equation*}
given by $R_{(x,\xi)}(h)=(h\cdot x,h* \xi)$; is a proper map; this is so because any $\tilde K$ compact set in $\Om$ is included on $X\times V$, with $V=\{\xi \in \Om : \xi(1_H)\in [K,\eps] \}$ for some $K$ compact subset of $G$ and $\epsilon>0$. Then we have that $R_{(x,\xi)}^{-1}(X\times V)=\{h\in H: \xi(h^{-1})\in [K,\eps]\}= \left(\xi^{-1}([K,\eps])\right)^{-1}$ which is compact. This implies that $R_{(x,\xi)}$ is an homeomorphism (cf. the corollary in \cite{Palais}). Since by the claims $3,4$ we have that $\Om$ is a Polish topological space, we can then use the Lemma \ref{Effros} to deduce that $H \act \Om$ is a smooth action. 
\end{proof}



Now by the lemma \ref{properaction}, we have that the $G$ action is smooth, so let's fix then $\df_G$ and $\df_H$ fundamental domains of the respective $G$-action and $H$-action respectively. Now in order to construct the measure $\mu$ on $\Om$, let's consider the continuous induced action $H\act \Om/G$, since the $G$-action is proper cocompact, by lemma \ref{properaction} we have that $\Om/G$ is compact, which since $H$ is amenable, implies that there exists $\nu$ probability measure on $\Om/G$ that is $H$-invariant, we then define $\nu_G$ as the probability on $\df_G$ given by the identification $s_G\colon \Om/G\to \df_G$, and then define the measure $\mu$ as $(i_G)_* (\la_G\tp \nu_G)$. We then have that:
\begin{claim}
    $\mu$ is $H$-invariant
\end{claim}
\begin{proof}
    Given any $h$ in $H$, and any measurable  $B\subset G, F\subset \df_G $, we need to prove:
    \begin{equation*}
        \mu(h^{-1}*{i_G(B\times F)})= \mu(i_G(B\times F))
    \end{equation*}
    Which is equivalent to:
    \begin{align*}
        \mu(h^{-1}*{i_G(B\times F)})&=\int_{\Om} \chi_{h^{-1}*{i_G(B\times F)}} d\mu      \\
        &=\int_{G\times \df_G}\chi_{B\times F}(i_G^{-1}(h*g*x)) d\la_G(g),d\nu_G(x) \\
        &=\int_{G\times \df_G}\chi_{B\times F}(g\cdot c_G(h,x),h\cdot x) d\la_G(g) d\nu_G(x)\\
        &=\int_{\df_G} \la_G(B)\chi_{F}(h\cdot x)  d\nu_G(x)\\
        &=\la_G(B) \nu_G(F).
    \end{align*}
    Which implies that $\mu$ is $H$-invariant.
    \end{proof}
Now since $\mu$ is $H$-invariant, by Lemma \ref{action2}, there exists $\nu_H$ a measure on $\df_H$ such that:
\begin{equation*}
    (i_H)_*(\la_H\tp \nu_H)= \mu
\end{equation*}
This then implies that $(\Om,\mu)$ is a measure subgroup coupling from $H$ to $G$. Moreover, we have that:
\begin{claim}
    $(\Om,\mu)$ is a $L^\infty$-measure subgroup coupling.
\end{claim}
\begin{proof}
Since the $G$-action is cocompact, we can assume that $\df_G$ is precompact. Now, since $S_H$ is precompact, we have that $S_H*\df_G$ is precompact as well since the $H$-action is continuous. Furthermore, as the $G$-action is proper we have that:
\begin{equation*}
    \{g \in G : g*\df_G \cap S_H*\df_G \neq \emptyset \}
\end{equation*}
is precompact, let's say that it is included in $B_G(1_G,L)$ for some $L>0$. Now for any $(x,\xi)$ in $\df_G$ and any $h$ in $S_H$, if we denote $g=c_G(h,x,\xi)$, we have that $h*(x,\xi)=g*(h\cdot (x,\xi))$ which implies that $g$ belongs to $B_G(1_G,L)$, since $d_G$ and $|\cdot|_G$ are quasi-isometric, this implies that there exists $C>0$ such that $|c_G(h,x,\xi)|_G\leq C$ for all $(x,\xi)$ in $\df_G$, which implies that $(\Om,\mu)$ is an $L^\infty$-measure subgroup coupling.

        
 
\end{proof}

\begin{claim}
    $(\Om,\mu)$ is a coarsely $m$-to-$1$ measure subgroup coupling for some $m>0$.
\end{claim}
\begin{proof}
Since $\df_G$ is precompact, the projection of $\df_G$ onto the coordinate $1_H$ is precompact as well, so there exists $[B(1,d),\epsilon]$ such that $\df_G\subset X\times \{\xi\in \Om_0 : \xi(1_H) \in [B(1,d),\epsilon ]\}$. Now given any $(x,\xi)$ in $\df_G$ and any $g$ in $G$, let's consider $S$ to be  $c_G(\cdot,x,\xi)^{-1}(B(g,3))$. Now for any $h$ in $S$, there exists some $\overline g$ in $B_G(g,3)$ such that:
\begin{equation*}
h*(x,\xi)=\overline g*(h\cdot (x,\xi))   
\end{equation*}
This then implies that $\xi(h^{-1})\in \la(\overline g)([B_G(1_G,d),\epsilon])\subset [B_G(g,d+3),\epsilon]$ which by proposition \ref{regularitylemma} implies that there exists $m$ such that $S^{-1}$ can be covered by at most $m$ balls of radius $s$. This then implies that $(\Om,\mu)$ is coarsely $m$ to $1$.

\end{proof}
All these claims prove our desired proposition.
\end{proof}
We can now prove the announced theorem at the beginning of the article:
\begin{teo}
\label{teoprincipal}
Given $H$,$G$ two locally compact compactly generated unimodular groups, with $H$ being amenable, the following are equivalent:
\begin{enumerate}
    \item There exists a regular embedding from $H$ to $G$.
    \item There exists a $L^\infty$-measure subgroup coupling $(\Om,\mu)$ from $H$ to $G$ that is coarsely $m$ to $1$. 
\end{enumerate}
\end{teo}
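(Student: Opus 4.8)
The implication $(1)\Rightarrow(2)$ is precisely the content of the Proposition proved just above (applied to the given regular embedding $f\colon H\to G$, using that $H$ is amenable), so the plan is to concentrate on the converse $(2)\Rightarrow(1)$. Suppose then that $(\Om,\mu)$ is an $L^\infty$-measure subgroup coupling from $H$ to $G$ that is coarsely $m$ to $1$. The natural candidate for a regular embedding is built from the cocycle at a fixed base point: pick $x_0\in\df_G$ and set $\phi(h)=c_G(h^{-1},x_0)$, i.e.\ $\phi=c_{x_0}$. The bounded-preimage half of the definition of a regular embedding is then essentially free of charge: by the definition of coarsely $m$ to $1$, the preimage under $c_{x_0}$ of any ball of radius $3$ in $G$ is covered by at most $m$ balls of radius $s$ in $H$, and since $(G,d_G)$ and $(H,d_H)$ are balanced this bound persists at all scales.

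For the large-scale Lipschitz half, the first step is the cocycle identity. From the commutation of the two actions together with freeness of the $G$-action one derives, for $a,b\in H$ and $x\in\df_G$,
\[
c_G(ab,x)=c_G(b,x)\,c_G(a,b\cdot x),
\]
which upon specializing gives the telescoping relation $\phi(h)^{-1}\phi(hu)=c_G(u^{-1},h^{-1}\cdot x_0)$. Writing $u^{-1}=s_1\cdots s_k$ with $s_j\in S_H$ and iterating the identity yields
\[
|c_G(s_1\cdots s_k,y)|_G\le \sum_{j=1}^{k}|c_G(s_j,(s_{j+1}\cdots s_k)\cdot y)|_G .
\]
Hence, if $C=\sup_{s\in S_H}\||c_G(s,\cdot)|_G\|_\infty<\infty$ (finite by the $L^\infty$ hypothesis) controlled every term on the right, one would get $d_G(\phi(h),\phi(hu))\le C\,|u^{-1}|_H$, and chaining along a geodesic would give the desired bound $d_G(\phi(h),\phi(h'))\le L\,d_H(h,h')+L$.

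The hard part, and the only genuine obstacle, is that the $L^\infty$ bound only furnishes $|c_G(s,\cdot)|_G\le C$ for $\nu_G$-almost every point, for each fixed generator $s$, whereas $S_H$ is uncountable; so one cannot naively secure the estimate simultaneously at the uncountably many orbit points $(s_j,\,w\cdot x_0)$ arising as $h$ ranges over all of $H$. This is exactly the place where the discrete argument of \cite{Romain} (which uses finiteness of $S_H$ and countability of $\Ga$) breaks down. I would resolve it by reducing to countably many conditions: fix a countable $s$-discretization $Z\subset H$, and for the countably many pairs $z,z'\in Z$ with $d_H(z,z')$ bounded by a fixed $D$ choose once and for all factorizations of $z'^{-1}z\in B_H(1_H,D)$ into boundedly many generators. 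This produces a countable family of conditions of the form $|c_G(s,w\cdot x_0)|_G\le C$. Using that $G$ is unimodular, the induced $H$-action on $(\df_G,\nu_G)$ is measure preserving (this is the same invariance computation as in the ``$\mu$ is $H$-invariant'' claim above), so each corresponding bad set $w^{-1}\cdot N_s$, where $N_s=\{x:|c_G(s,x)|_G>C\}$, is $\nu_G$-null; a $\nu_G$-generic $x_0$ therefore avoids their countable union and makes all the chosen conditions hold.

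With such an $x_0$ fixed, $\phi$ is large-scale Lipschitz in restriction to $Z$, and I would finish by setting $f=\phi\circ\rho$, where $\rho\colon H\to Z$ is a nearest-point retraction. Since $Z$ is $s$-dense, the Lipschitz estimate on $Z$ transfers to $f$ on all of $H$ up to an additive constant, while the bounded-preimage property of $\phi=c_{x_0}$ transfers to $f$ up to a harmless change of scale absorbed by balancedness. Thus $f$ is a regular embedding from $H$ to $G$, completing $(2)\Rightarrow(1)$; note that amenability of $H$ is not needed for this direction, as $\nu_G$ is already provided by the coupling.
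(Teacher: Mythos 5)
Your proposal is correct, and its skeleton coincides with the paper's: the forward implication is quoted from the preceding Proposition, and for the converse both you and the paper take the map $h\mapsto c_G(h^{-1},x_0)$ based at a point $x_0\in\df_G$, getting the preimage condition directly from the definition of coarsely $m$-to-$1$ and the large-scale Lipschitz condition from the cocycle identity together with the $L^\infty$ bound on generators. The genuine difference is the extra layer you build to handle the fact that the $L^\infty$ condition is only an essential supremum: for each fixed $s\in S_H$ the bound $|c_G(s,x)|_G\le C$ holds only for $\nu_G$-a.e.\ $x$, while the chaining argument needs it at uncountably many orbit points. The paper's proof does not address this at all --- it simply asserts the bound ``for any $h$ in $S_H$ and any $x$ in $\df_G$'' and proceeds --- whereas your reduction to countably many conditions (a countable discretization $Z$, fixed factorizations of the elements $z'^{-1}z$, nullity of each translate $w^{-1}\cdot N_s$ because the induced $H$-action preserves $\nu_G$, which is indeed the ``$\mu$ is $H$-invariant'' computation read backwards using unimodularity of $G$, then a generic choice of $x_0$ and a retraction $H\to Z$) is a sound repair, with balancedness absorbing the resulting changes of scale. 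A second point in your favour: the paper's final step deduces the Lipschitz estimate from $|c_x(h_1)^{-1}c_x(h_2)|_G\le C|h_1|_H+C|h_2|_H$, which does not suffice (the right-hand side is not controlled by $d_H(h_1,h_2)$); the correct step is exactly your identity $\phi(h)^{-1}\phi(hu)=c_G(u^{-1},h^{-1}\cdot x_0)$, equivalently $c_x(h_1)^{-1}c_x(h_2)=c_G(h_2^{-1}h_1,h_1^{-1}\cdot x_0)$, to which the uniform generator bound then applies. So what your longer route buys is a proof that is actually complete at the two points where the paper's shorter write-up has gaps; what the paper's route buys is only brevity. Your closing remark that amenability of $H$ is not needed in this direction also agrees with the paper.
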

\begin{proof}
    The fact that $(1)$ implies $(2)$ comes from the last proposition, so we only need to prove the converse. Suppose then that we have $(\Om,\mu)$ a $L^\infty$-measure subgroup coupling from $H$ to $G$ that is coarsely $m$ to $1$. Since this implies that for any $x$ in $\df_G$ the map $c_x\colon H \to G$ given by $c_x(h)=c_G(h^{-1},x)$ satisfies the second condition on the definition of a regular embedding, all we need to prove is that this map satisfies the first condition, now since the coupling is $L^\infty$ we know there exists some $C>0$ such that for any $h$ in $S_H$ and any $x$ in $\df_G$:
    \begin{equation*}
        ||c(h,x)||_G\leq C
    \end{equation*}
    Using this and the cocycle condition, for any $h$ in $H$, we then have that:
    \begin{equation*}
        ||c_G(h,x)||_G\leq C||h||_H
    \end{equation*}
    Then for any $h_1,h_2$ in $H$, we then have that $||c_x(h_1)||_G\leq C||h_1||_H$ and that     $||c_x(h_2)||_G\leq C||h_2||_H$, which then implies that:
    \begin{equation*}
        ||c_x(h_1)^{-1}c_x(h_2)||_G\leq C||h_1||_H+C||h_2||_H
    \end{equation*}
     Since $d_G$ is quasi-isometric to $|\cdot|_G$ and $d_H$ is quasi-isometric to $|\cdot|_H$ we then have that there exists $L$ such that:
     \begin{equation*}
      d_G(c_x(h_1),c_x(h_2))\leq Ld_H(h_1,h_2)+L   
     \end{equation*}
     Which proves the desired theorem.
\end{proof}

\section{Monotonocity of the Isoperimetric Profile}

\subsection{Isoperimetric profile of locally compact groups}
The goal of this subsection is to recollect the relevant properties of the isoperimetric profile for locally compact groups  as well as some relevant lemmas that will be useful in the proof of the monotonicity of the isoperimetric profile under quantitative measure couplings, namely we recall the fact that in order to understand the large-scale behaviour of the isoperimetric profile we just need to restrict ourselves to thick subsets. Now, given $G$ a locally compact unimodular group with compact generating symmetric subset $S_G$, with fixed Haar measure $\la_G$; acting by isometries on a $L^p$-space $E$, we define the $p$-gradient of a function $f$ in $E$ as:
        \begin{equation*}
            ||\nabla_G f||_p = \sup_{s\in S_G} ||f-s*f||_p
        \end{equation*}
The following classical lemma will be useful in the next subsection, so we prove it here:

\begin{lem}Given $E$ an $L^p$ space, and $f$ in $E$, for any  $g$ in $G$, we have the following:
    \begin{equation*}
        ||f-g\ast f||_p \leq |g|_G ||\nabla_G f||_p
    \end{equation*}
\label{ineq}
\end{lem}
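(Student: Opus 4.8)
The plan is to reduce the estimate for a general $g$ to the defining supremum over the generating set $S_G$ by telescoping along a shortest word for $g$. First I would use the definition of the word length to write $g = s_1 s_2 \cdots s_n$ with each $s_i \in S_G$ and $n = |g|_G$. The central identity is the telescoping decomposition
\begin{equation*}
    f - g\ast f = \sum_{k=0}^{n-1}\left( (s_1\cdots s_k)\ast f - (s_1\cdots s_{k+1})\ast f \right),
\end{equation*}
where $s_1\cdots s_0$ is understood to be the identity; the intermediate terms cancel and the sum collapses to $f - (s_1\cdots s_n)\ast f = f - g\ast f$.

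Next I would rewrite each summand using that the $G$-action on $E$ is a linear action satisfying $g\ast(h\ast f) = (gh)\ast f$, so that
\begin{equation*}
    (s_1\cdots s_k)\ast f - (s_1\cdots s_{k+1})\ast f = (s_1\cdots s_k)\ast\big(f - s_{k+1}\ast f\big).
\end{equation*}
Since $G$ acts by isometries of the $L^p$-space $E$, the operator $(s_1\cdots s_k)\ast(\,\cdot\,)$ preserves the norm, whence
\begin{equation*}
    \big\|(s_1\cdots s_k)\ast\big(f - s_{k+1}\ast f\big)\big\|_p = \|f - s_{k+1}\ast f\|_p.
\end{equation*}

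Finally I would combine these with the triangle inequality and the definition of the gradient:
\begin{equation*}
    \|f - g\ast f\|_p \le \sum_{k=0}^{n-1}\|f - s_{k+1}\ast f\|_p \le \sum_{k=0}^{n-1}\|\nabla_G f\|_p = n\,\|\nabla_G f\|_p = |g|_G\,\|\nabla_G f\|_p,
\end{equation*}
where the middle inequality uses that each $s_{k+1}\in S_G$ together with $\|\nabla_G f\|_p = \sup_{s\in S_G}\|f - s\ast f\|_p$. There is no genuine obstacle here; the only points requiring care are that the word for $g$ uses exactly $|g|_G$ generators, and that both properties of the action are invoked in the right place — linearity to split each telescoping term, and the isometry property to discard the prefix $(s_1\cdots s_k)$ before bounding by the gradient.
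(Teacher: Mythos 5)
Your proof is correct and takes essentially the same route as the paper: telescoping $f - g\ast f$ along a minimal word $g = s_1\cdots s_n$ with $n = |g|_G$, using linearity and the isometry of the action to reduce each term to $\|f - s_{k+1}\ast f\|_p$, and then bounding by $\|\nabla_G f\|_p$ via the triangle inequality. If anything, you are slightly more careful than the paper, whose displayed computation writes the triangle-inequality step as an equality.
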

\begin{proof}
 By the definition of $|g|_G=n$, we have that $g=s_1\cdot s_2\cdots  s_n$ where each $s_i$ belongs to $S_G$ for $i$ in $\{1,\cdots, n\}$. It is clear then that:
 \begin{align*}
     ||f-g\ast f||_p &= \sum_{i=0}^{n-1} ||s_1\cdots s_i \ast f- s_1\cdots s_{i+1}\ast f||_p \\
     &= \sum_{i=0}^{n-1} || f- s_{i+1}\ast f||_p\\
     &\leq n ||\nabla_G f||_p= |g|_G ||\nabla_G f||_p
 \end{align*} 
 Concluding the proof of the lemma.
 \end{proof}

    We will need as well the following sublinear version of the previous lemma:
 \begin{lem}
 Given $E$ a $L^1$-space, $f$ in $E$ and $\varphi\colon (0,\infty)\to (0,\infty)$ a function such that $\varphi$ and $t\mapsto \frac{t}{\varphi(t)}$ are non-decreasing functions. Then for any $g$ in $G$ we have the following:
\begin{equation*}
    ||f-g*f||_1\leq 2\varphi(|g|_G ||\nabla_G f||_1) \dfrac{||f||_1}{\varphi(||f||_1)} 
\end{equation*}
  \label{ineq2}
 \end{lem}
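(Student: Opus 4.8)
The plan is to interpolate between two elementary bounds on $||f-g*f||_1$ using the two monotonicity hypotheses on $\varphi$. First I would record the linear bound $||f-g*f||_1 \le |g|_G\,||\nabla_G f||_1$ supplied by Lemma \ref{ineq}, together with the trivial bound $||f-g*f||_1 \le ||f||_1 + ||g*f||_1 = 2||f||_1$, which holds because $G$ acts by isometries so that $||g*f||_1 = ||f||_1$. Writing $A = |g|_G\,||\nabla_G f||_1$ and $B = ||f||_1$, the statement reduces to showing $\min(A,2B) \le 2\varphi(A)\tfrac{B}{\varphi(B)}$ whenever $A,B>0$.

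The crux is a dichotomy on the relative sizes of $A$ and $B$, in which the two hypotheses on $\varphi$ are used in complementary regimes. If $A \le B$, I would invoke the linear bound together with the fact that $t \mapsto \tfrac{t}{\varphi(t)}$ is non-decreasing: this gives $\tfrac{A}{\varphi(A)} \le \tfrac{B}{\varphi(B)}$, hence $||f-g*f||_1 \le A \le \varphi(A)\tfrac{B}{\varphi(B)} \le 2\varphi(A)\tfrac{B}{\varphi(B)}$. If instead $A > B$, I would use the trivial bound $2B$ together with the fact that $\varphi$ itself is non-decreasing, so that $\varphi(B) \le \varphi(A)$ and therefore $||f-g*f||_1 \le 2B = 2\varphi(B)\tfrac{B}{\varphi(B)} \le 2\varphi(A)\tfrac{B}{\varphi(B)}$. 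In either case the claimed inequality follows, the factor $2$ being exactly what is needed to glue the two regimes together across the threshold $A=B$.

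There is essentially no hard computational step; the only point requiring care is that $\varphi$ is only defined on $(0,\infty)$, so I would first dispose of the degenerate cases $||f||_1=0$ (whence $f=0$) and $|g|_G\,||\nabla_G f||_1=0$, in which both sides of the inequality vanish. I expect the main conceptual obstacle, if any, to be recognizing that the word-length estimate of Lemma \ref{ineq} must be paired with the obvious $2||f||_1$ bound and that the monotonicity of $\varphi$ governs precisely the range where the former is too weak, while the monotonicity of $t/\varphi(t)$ governs the range where it is effective.
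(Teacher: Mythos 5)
Your proof is correct, and it rests on exactly the same ingredients as the paper's: the linear bound of Lemma \ref{ineq}, the trivial bound $||f-g*f||_1\leq 2||f||_1$ (valid since the action is isometric), and the two monotonicity hypotheses on $\varphi$. The mechanism, however, is different. Where you run a dichotomy on $A=|g|_G||\nabla_G f||_1$ versus $B=||f||_1$ and use one monotonicity hypothesis in each regime, the paper avoids any case analysis: setting $x=||f-g*f||_1$, it uses the identity $x=\varphi(x)\cdot\frac{x}{\varphi(x)}$ and bounds the two factors simultaneously, namely $\varphi(x)\leq \varphi(|g|_G||\nabla_G f||_1)$ by monotonicity of $\varphi$ applied to the linear bound, and $\frac{x}{\varphi(x)}\leq \frac{2||f||_1}{\varphi(2||f||_1)}\leq \frac{2||f||_1}{\varphi(||f||_1)}$ by monotonicity of $t\mapsto \frac{t}{\varphi(t)}$ applied to the trivial bound, followed by monotonicity of $\varphi$ once more to drop the factor $2$ inside $\varphi$. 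The two arguments prove the same inequality with the same constant; the paper's factorization trick is slightly slicker in that both upper bounds on $x$ are exploited at once rather than in complementary ranges, while your version makes transparent why the factor $2$ is exactly what glues the two regimes at the threshold $A=B$. One point in your favor: you explicitly dispose of the degenerate cases $||f||_1=0$ and $|g|_G||\nabla_G f||_1=0$, where $\varphi$ would be evaluated at $0$; the paper's chain of inequalities silently requires $||f-g*f||_1>0$ for its very first step to make sense.
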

\begin{proof}
   By the Lemma \ref{ineq}, we obtain that $||f-g*f||_1\leq |g|_G||\nabla_G f||_1$. We have as well that $||f-g*f||_1\leq 2||f||_1$. We then obtain that:
   \begin{align*}
       ||f-g*f||_1 &\leq \varphi(||f-g*f||_1) \frac{||f-g*f||_1}{\varphi(||f-g*f||_1)} \\
       &\leq \varphi(|g|_G.||\nabla_G f||_1) \frac{2||f||_1}{\varphi(2||f||_1)}\\
       &\leq  2\varphi(|g|_G.||\nabla_G f||_1) \frac{||f||_1}{\varphi(2||f||_1)}
       \leq 2\varphi(|g|_G.||\nabla_G f||_1) \frac{||f||_1}{\varphi(||f||_1)}
   \end{align*}
   Concluding the proof of the lemma.
 \end{proof}
 Now, in order to define the $L^p$-isoperimetric profile of $G$, we will consider either the gradient coming from the left regular representation $\la:G\act L^p(G)$, which will be denoted by $||\nabla_G^l f||_p$ or the right regular representation $\rho\colon G\act L^p(G)$ which will be denoted $||\nabla_G^r f ||_p$. 

Now, given any subset $A$ of $G$ of finite measure, we define the right $L^p$-isoperimetric profile of $A$ as:
\begin{equation*}
    J_{p,G}^r(A) = \sup_{f\in L^p(A)} \dfrac{||f||_p}{||\nabla^r_G f ||_p}     
\end{equation*}
and we define the right $L^p$-isoperimetric profile of $G$ as:
\begin{equation*}
    j_{p,G}^r(v)= \sup_{\la_G(A)\leq v} J_{p,G}^r(A)
\end{equation*}
We define in a similar manner the left isoperimetric profile. Note that, since $G$ is unimodular, the right and left isoperimetric profiles coincide. Moreover, it can be seen as well, that in order to compute $J^r_{p,G}(A)$, we can restrict ourselves to functions in $L^\infty(A)$. More precisely:
\begin{prop}
If we define:
\begin{equation*}
    \overline J_{p,G}(A)= \sup_{f\in L^\infty(A)} \dfrac{||f||_p}{||\nabla_G^r f||_p} 
\end{equation*}
we obtain that $\overline J_{p,G}(A)=J_{p,G}(A)$. 
\end{prop}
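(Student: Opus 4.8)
The plan is to prove the two inequalities $\overline J_{p,G}(A)\leq J_{p,G}(A)$ and $J_{p,G}(A)\leq \overline J_{p,G}(A)$ separately. The first is immediate: since $\la_G(A)<\infty$ we have $L^\infty(A)\subset L^p(A)$, so the supremum defining $\overline J_{p,G}(A)$ ranges over a smaller family of test functions, whence $\overline J_{p,G}(A)\leq J_{p,G}(A)$. All the content lies in the reverse inequality, which I would obtain by approximating an arbitrary $f\in L^p(A)$ by bounded truncations \emph{without increasing its gradient}.

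Concretely, for $f\in L^p(A)$ and $n\geq 1$ I set $f_n=T_n\circ f$, where $T_n\colon \R\to\R$ is the truncation $T_n(t)=\max(-n,\min(n,t))$. Each $T_n$ is $1$-Lipschitz and fixes $0$, so $f_n$ is bounded by $n$ and still supported on $A$, that is $f_n\in L^\infty(A)$. The key observation is that post-composition with $T_n$ commutes with the right regular representation: for $(s*f)(x)=f(xs)$ one has $s*f_n=T_n\circ(s*f)$, since the action merely permutes points while $T_n$ acts pointwise. Using that $T_n$ is $1$-Lipschitz we then obtain the pointwise bound
\[
|f_n(x)-(s*f_n)(x)|=|T_n(f(x))-T_n((s*f)(x))|\leq |f(x)-(s*f)(x)|,
\]
and integrating the $p$-th powers and taking the supremum over $s\in S_G$ yields $||\nabla_G^r f_n||_p\leq ||\nabla_G^r f||_p$.

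On the other hand $|f_n|\leq |f|$ and $f_n\to f$ pointwise, so dominated convergence gives $||f_n||_p\to ||f||_p$. If $||\nabla_G^r f||_p>0$, combining these facts gives
\[
\frac{||f||_p}{||\nabla_G^r f||_p}=\lim_{n}\frac{||f_n||_p}{||\nabla_G^r f||_p}\leq \liminf_{n}\frac{||f_n||_p}{||\nabla_G^r f_n||_p}\leq \overline J_{p,G}(A),
\]
the last inequality because each $f_n\in L^\infty(A)$. If instead $||\nabla_G^r f||_p=0$ for some nonzero $f$, then $||\nabla_G^r f_n||_p=0$ as well while $||f_n||_p>0$ for $n$ large, so both $J_{p,G}(A)$ and $\overline J_{p,G}(A)$ equal $+\infty$. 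Taking the supremum over all $f\in L^p(A)$ then yields $J_{p,G}(A)\leq \overline J_{p,G}(A)$, and together with the first inequality this establishes the claimed equality.

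The only genuinely delicate point is the gradient inequality $||\nabla_G^r f_n||_p\leq ||\nabla_G^r f||_p$, which I expect to be the main step; it rests entirely on the truncation being a $1$-Lipschitz contraction fixing the origin that commutes with the (measure-preserving, point-permuting) regular representation. Everything else is a routine approximation argument, and by unimodularity the same reasoning applies verbatim to the left gradient.
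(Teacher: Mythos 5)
Your proof is correct, and it takes a genuinely different route from the paper's at the key step. The paper also reduces to approximating $f\in L^p(A)$ by bounded functions, but it takes an \emph{arbitrary} sequence $f_n\in L^\infty(A)$ with $f_n\to f$ in $L^p(A)$ and controls the gradient by a reverse-triangle-inequality estimate, $\bigl|\,||f_n-\rho(s)f_n||_p-||f-\rho(s)f||_p\,\bigr|\leq 2||f_n-f||_p$, which is uniform in $s\in S_G$ and hence gives the two-sided convergence $||\nabla_G f_n||_p\to ||\nabla_G f||_p$; it then passes to the limit in the linear inequality $||f_n||_p\leq \overline J_{p,G}(A)\,||\nabla_G f_n||_p$, which sidesteps ratios entirely and so never has to discuss the degenerate case $||\nabla_G^r f||_p=0$ (that case is absorbed: the limit inequality forces $f=0$ whenever $\overline J_{p,G}(A)<\infty$). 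You instead use the specific truncations $f_n=T_n\circ f$ and the fact that the $1$-Lipschitz contraction $T_n$, fixing $0$, commutes with the point-permuting right regular representation, yielding the one-sided monotone bound $||\nabla_G^r f_n||_p\leq ||\nabla_G^r f||_p$ rather than convergence of gradients; this is a cleaner and somewhat stronger mechanism (no continuity of the gradient norm needed, and it transfers to any setting where one only has a contraction acting pointwise), at the cost of having to run the $\liminf$ argument on ratios and treat the zero-gradient case by hand, which you do correctly. Both arguments are complete; the paper's is more flexible in the choice of approximating sequence, yours gives the sharper structural fact that truncation never increases the gradient.
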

\begin{proof}
    It is clear that $J_{p,G}(A)\geq \overline J_{p,G}(A)$. In order to prove the inverse inequality, let's consider any $f$ in $L^p(A)$, there exists a sequence $(f_n)_{n\in \mathbb N}$ in $L^\infty(A)$ such that $f_n\to f$ in $L^p(A)$. For any $s$ in $S_G$, we have that:
    \begin{equation*}
        \big| ||f_n-\rho(s)f_n||_p-||f-\rho(s)f||_p \big|\leq 2||f_n-f||_p
    \end{equation*}
    So, for any $\epsilon>0$ there exists $n_0$, such that for all $n\geq n_0$ and all $s\in S_G$, we have that:
    \begin{equation*}
        ||f-\rho(s)f||_p-\epsilon \leq ||f_n-\rho(s)f_n||_p\leq ||f-\rho(s)f||_p+\epsilon
    \end{equation*}
    Which then implies that for all $n\geq n_0$ we have that:
    \begin{equation*}
        ||\nabla_G f||_p -\epsilon \leq ||\nabla_G f_n||_p \leq ||\nabla_G f||_p+\epsilon
    \end{equation*}
    So, $||\nabla_G f_n||_p\to ||\nabla_G f||_p$ when $n\to \infty$. Then, it's clear to see that:
    \begin{equation*}
        ||f_n||_p\leq \overline{J}_{p,G}(A) ||\nabla_G f_n||_p.
    \end{equation*}
    Which then implies that:
    \begin{equation*}
        ||f||_p\leq  \overline{J}_{p,G}(A) ||\nabla_G f||_p.
    \end{equation*}
    For all $f$ in $L^p(A)$, so $J_{p,G}(A) \leq \overline{J}_{p,G}(A) $.
\end{proof}

Furthermore, we will need to restrict ourselves to thick subsets as well, in order to do that, we will use the other two natural notions of gradients introduced in \cite{IsoProfilLC}. More precisely, for any $h>0$, any subset $A$ of finite measure and any $f\in L^\infty(A)$ we obtain the following 3 notions of gradients:
\begin{align*}
    ||\nabla^1_h f||^p_p&= \int_{G} \sup_{s\in B(1,h)}\left|f(gs)-f(g)\right|^p d\la_G(g)\\
    ||\nabla_h f ||_p^p &= \sup_{s\in B(1,h)} \int_{G}  |f(gs)-f(g)|^p d\la_G(g) \\
    ||\nabla^2_h f||_p^p&= \int_G \fint_{s\in B(1,h)} \left| f(gs)-f(g)\right|^p d\la_G(s) d\la_G(g) .
\end{align*}

The gradient $||\nabla^1||_p$ was introduced in \cite{IsoProfilLC}, p.7, as $|\nabla f|_h$; while the gradient $||\nabla^2||_p$ was introduced in the same article as $|\nabla_{P,p}|$ for $dP_x= \frac1{\la_G(B(1,h))} 1_{B(x,h)} d\la_G$. Now, it is easy to see that we have the following inequalities:
\begin{equation*}
    ||\nabla^1_h f||_p \geq ||\nabla_h f||_p \geq ||\nabla_h^2 f||_p
\end{equation*}

Now, it is proven in the Proposition 7.2 in \cite{IsoProfilLC} that the asymptotic behaviour of the respective isoperimetric profiles under $\nabla^1$ and $\nabla^2$ coincide, and in the Proposition 10.1 in \cite{IsoProfilLC} that the isoperimetric profile under $\nabla^1$ doesn't depend on the parameter $h$, provided $h$ is large enough.
Moreover, the following lemma is proven in the same article:
\begin{lem}[Proposition 8.3 in \cite{IsoProfilLC}]
    There exists $C>0$ such that for any $f$ in $L^\infty(A)$, there is a function $\tilde f$ in $L^\infty(\tilde A)$ such that $\tilde A$ is $h$-thick and we have that:
    \begin{align*}
       \la_G( \tilde A) &\leq \la_G(A) + C\\
        \dfrac{||f||_p}{||\nabla^1_{2h} f||_p}&\leq C\dfrac{||\tilde f||_p}{||\nabla_h^1 \tilde f||_p}
    \end{align*}
\label{Thick}
\end{lem}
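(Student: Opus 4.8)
The plan is to thicken $f$ at scale $h$ so that its support becomes a union of $h$-balls while the isoperimetric ratio is not lowered and the measure of the support grows by at most a fixed additive constant. For the ratio, the right construction is the \emph{$h$-sup-majorant} $\tilde f(g)=\sup_{u\in\overline{B(1,h)}}|f(gu)|$. Using that $d_G$ is left-invariant, one has $a\,\overline{B(1,h)}=\overline{B(a,h)}$, so $\supp\tilde f=A\,\overline{B(1,h)}=\bigcup_{a\in A}\overline{B(a,h)}$ is a union of closed $h$-balls, hence $h$-thick; call it $\tilde A$. Since $u=1$ is admissible, $\tilde f\geq |f|$ pointwise, so $||\tilde f||_p\geq ||f||_p$, which already handles the numerator of inequality $(2)$.

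The conceptually central point is the pointwise gradient bound $\nabla^1_h\tilde f(g)\leq \nabla^1_{2h}f(g)$, which I would prove as follows. Fix $g$ and $s\in B(1,h)$. For every $u\in\overline{B(1,h)}$ the element $su$ lies in $\overline{B(1,2h)}$, so $|f(gsu)|\leq |f(g)|+\nabla^1_{2h}f(g)$ by definition of $\nabla^1_{2h}f(g)$; taking the supremum over $u$ gives $\tilde f(gs)\leq |f(g)|+\nabla^1_{2h}f(g)$, and the same bound holds for $\tilde f(g)$. On the other hand, since $S_G$ is symmetric we have $s^{-1}\in\overline{B(1,h)}$, and choosing $u=s^{-1}$ yields $\tilde f(gs)\geq |f(gss^{-1})|=|f(g)|$; likewise $\tilde f(g)\geq |f(g)|$. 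Thus both $\tilde f(gs)$ and $\tilde f(g)$ lie in the interval $[\,|f(g)|,\,|f(g)|+\nabla^1_{2h}f(g)\,]$, so $|\tilde f(gs)-\tilde f(g)|\leq \nabla^1_{2h}f(g)$; taking the supremum over $s$ gives the claim. Integrating, $||\nabla^1_h\tilde f||_p\leq ||\nabla^1_{2h}f||_p$, and together with $||\tilde f||_p\geq ||f||_p$ this yields $\frac{||f||_p}{||\nabla^1_{2h}f||_p}\leq \frac{||\tilde f||_p}{||\nabla^1_h\tilde f||_p}$, i.e. inequality $(2)$.

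The main obstacle is the measure bound $(1)$, which the plain sup-majorant does \emph{not} satisfy: if $A$ is a union of many pairwise $2h$-separated small balls, then $\la_G(A\,\overline{B(1,h)})$ is comparable to the number of balls while $\la_G(A)$ stays small, so the increase is not additive. The fix is to make the thickening adaptive. Decomposing $f$ through its superlevel sets $A_t=\{\,|f|>t\,\}$ (the layer-cake formula lets one reassemble the $L^p$-norm and the gradient functionals from levelwise data), one separates in each level a \emph{fat} part, where genuine $h$-balls sit inside $A_t$ and one replaces $A_t$ by its morphological opening $\bigcup\{\,B(c,h):B(c,h)\subseteq A_t\,\}\subseteq A_t$, which is $h$-thick and has measure at most $\la_G(A_t)$, from a \emph{thin} part containing no interior $h$-ball. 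A thin part has isoperimetric ratio bounded by an absolute constant, since its volume is controlled by its perimeter at scale $h$, so it may be absorbed into a single $h$-ball at a cost of at most $C=\la_G(B(1,h))$ in measure without lowering the ratio. Reconciling the dilation used above for the gradient with this erosion-type control of the measure — keeping $||\nabla^1_h\tilde f||_p$ small and $\la_G(\tilde A)\leq \la_G(A)+C$ simultaneously — is the delicate step, carried out through the net and discretization machinery of the preliminaries; this is exactly the content of Proposition 8.3 in \cite{IsoProfilLC}, which we invoke.
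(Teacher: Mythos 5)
The paper does not prove this lemma at all: it is imported verbatim as Proposition 8.3 of \cite{IsoProfilLC} (``the following lemma is proven in the same article''), so there is no internal argument to compare yours against. Judged as a standalone proof, your attempt has a genuine gap, and you effectively concede it: your final sentence invokes Proposition 8.3 of \cite{IsoProfilLC}, which is precisely the statement to be proven, so the argument is circular.

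The part you do carry out is correct but only handles the easy half. The sup-majorant $\tilde f(g)=\sup_{u\in\overline{B(1,h)}}|f(gu)|$ (modulo replacing the supremum by an essential supremum to guarantee measurability) does have $h$-thick support $A\,\overline{B(1,h)}$, satisfies $\tilde f\geq |f|$, and your interval argument for the pointwise bound $\nabla^1_h\tilde f(g)\leq\nabla^1_{2h}f(g)$ is sound; this yields inequality $(2)$ with constant $1$. But the real content of the lemma is the additive measure bound $\la_G(\tilde A)\leq \la_G(A)+C$ with $C$ uniform over all $A$ and $f$, and this your construction does not give -- as you yourself observe with the separated-small-balls example (in fact the failure is worse than additive: for $A$ a union of infinitely many well-separated tiny balls of finite total measure, $A\,\overline{B(1,h)}$ has \emph{infinite} measure). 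The fat/thin level-set decomposition you sketch as a repair is plausible in outline but is never executed, and it conceals the genuine difficulties: the gradient $\nabla^1$ has a supremum inside the integral and does not decompose along superlevel sets (there is no clean co-area identity for it when $p>1$), and recombining the levelwise morphological openings into a single function whose $\nabla^1_h$-norm is still dominated by $\lVert\nabla^1_{2h}f\rVert_p$ is exactly the delicate step. Deferring that step to the cited proposition means the proof was never given. Since the paper itself treats the lemma as an external citation, invoking \cite{IsoProfilLC} is legitimate -- but then the preceding construction is moot, and the attempt should be presented as a citation rather than as a proof.
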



Let's consider $h$ large enough that the isoperimetric profile under $\nabla^1$ doesn't depend on $h$, and such that $S_G\subset B(1,h)$. We can then restrict ourselves to $h$-thick subsets, more precisely:
\begin{lem}
\label{thickprofile}
    Let's define:
    \begin{equation*}
    \tilde j_{h,G}(v)= \sup_{\la_G(A) \leq v, \textrm{A is } h-\textrm{thick}}  \overline J_{p,G}(A).
\end{equation*}
We then have that $\tilde j_{h,G} \approx j^r_{G}$.
\end{lem}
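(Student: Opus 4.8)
The plan is to sandwich the right gradient $\nabla^r_G$ between the three gradients $\nabla^1_h$, $\nabla_h$, $\nabla^2_h$ recalled above, so that $j^r_G$ inherits the two facts quoted from \cite{IsoProfilLC} — that the $\nabla^1$ and $\nabla^2$ profiles are asymptotically equal, and that the $\nabla^1$ profile is independent of $h$ for $h$ large — and then to pass to $h$-thick sets using Lemma~\ref{Thick}. Throughout I write $J^{(i)}(A)=\sup_{f\in L^\infty(A)}\|f\|_p/\|\nabla^i_h f\|_p$ and $j^{(i)}$, $\tilde j^{(i)}$ for the corresponding full and $h$-thick profiles; recall also that $j^r_G=\sup_{\la_G(A)\le v}\overline J_{p,G}(A)$, by the proposition identifying $J_{p,G}$ with $\overline J_{p,G}$.

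First I would establish a gradient sandwich. Since $S_G\subset B(1,h)$, taking the supremum over the smaller set $S_G$ gives $\|\nabla^r_G f\|_p\le\|\nabla_h f\|_p$. Conversely every $s\in B(1,h)$ has word length at most $D_h:=\sup_{t\in B(1,h)}|t|_G<\infty$ — finite because $|\cdot|_G$ is quasi-isometric to the proper metric $d_G$ — so Lemma~\ref{ineq}, applied to the (isometric) right regular representation, gives $\|f-\rho(s)f\|_p\le D_h\|\nabla^r_G f\|_p$ and hence $\|\nabla_h f\|_p\le D_h\|\nabla^r_G f\|_p$. Combined with $\|\nabla^2_h f\|_p\le\|\nabla_h f\|_p\le\|\nabla^1_h f\|_p$ this yields, for every finite-measure $A$,
\[
J^{(1)}(A)\ \le\ \overline J_{p,G}(A)\ \le\ D_h\,J^{(2)}(A),
\]
and the same inequalities for the $h$-thick profiles, so that $j^{(1)}\le j^r_G\le D_h j^{(2)}$ and $\tilde j^{(1)}\le\tilde j_{h,G}\le D_h\tilde j^{(2)}$.

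One half of the lemma is then immediate: the $h$-thick supremum runs over a subclass of sets, so $\tilde j_{h,G}\le j^r_G$ pointwise and thus $\tilde j_{h,G}\preccurlyeq j^r_G$. For the reverse $j^r_G\preccurlyeq\tilde j_{h,G}$ I would chain the sandwich with the two quoted propositions and the thickening lemma. Given $A$ with $\la_G(A)\le v$ and $f\in L^\infty(A)$, Lemma~\ref{Thick} furnishes an $h$-thick $\tilde A$ with $\la_G(\tilde A)\le v+C$ and $\tilde f\in L^\infty(\tilde A)$ satisfying $\|f\|_p/\|\nabla^1_{2h}f\|_p\le C\,\|\tilde f\|_p/\|\nabla^1_h\tilde f\|_p\le C\,\tilde j^{(1)}(v+C)$; taking suprema gives $j^{(1)}_{2h}(v)\le C\,\tilde j^{(1)}(v+C)$, where $j^{(1)}_{2h}$ denotes the $\nabla^1_{2h}$ profile. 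Since $j^{(1)}_{2h}\approx j^{(1)}$ (Proposition~10.1 of \cite{IsoProfilLC}, $h$-independence) and $j^{(2)}\approx j^{(1)}$ (Proposition~7.2 of \cite{IsoProfilLC}), the sandwich yields
\[
j^r_G\ \preccurlyeq\ j^{(2)}\ \approx\ j^{(1)}\ \approx\ j^{(1)}_{2h}\ \preccurlyeq\ \tilde j^{(1)}\ \le\ \tilde j_{h,G},
\]
the additive constant being absorbed because the profiles are non-decreasing and $v+C\le 2v$ for large $v$. By transitivity of $\preccurlyeq$ this gives $j^r_G\approx\tilde j_{h,G}$.

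The only genuinely new ingredient is the gradient sandwich, whose reverse inequality rests on the finiteness of $D_h$; everything else is bookkeeping, and the main point requiring care is that Lemma~\ref{Thick} is stated at scale $2h$ on the left and $h$ on the right, so one must invoke the $h$-independence (Proposition~10.1) to return to a single scale before closing the loop, and use the $\nabla^1/\nabla^2$ equivalence (Proposition~7.2) to move the cheap $\nabla^1$ lower bound on $j^r_G$ over to the $\nabla^2$ upper bound.
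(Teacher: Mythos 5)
Your proof is correct and follows essentially the same route as the paper's: the inequality $\tilde j_{h,G}\leq j^r_{p,G}$ is immediate, and the reverse direction combines Lemma~\ref{Thick} with the $h$-independence of the $\nabla^1$-profile (Proposition 10.1 of \cite{IsoProfilLC}) and the $\nabla^1/\nabla^2$ equivalence (Proposition 7.2 of \cite{IsoProfilLC}). The one point where you go beyond the paper is the reverse half of your gradient sandwich, $\lVert\nabla_h f\rVert_p\leq D_h\lVert\nabla^r_G f\rVert_p$ with $D_h=\sup_{t\in B(1,h)}|t|_G$, which gives $j^r_{p,G}\preccurlyeq j^2_h$: the paper only records $\lVert\nabla^1_h f\rVert_p\geq\lVert\nabla_h f\rVert_p\geq\lVert\nabla^r_G f\rVert_p$ and simply asserts $j^1_h\approx j^r_{p,G}$, so your appeal to Lemma~\ref{ineq} makes explicit a comparison that the paper's proof leaves implicit.
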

\begin{proof}
Let's denote by $j^1_h$ and $j^2_h$ the isoperimetric profiles with respect to $\nabla_h^1$ and $\nabla_h^2$ respectively.
It's clear that $\tilde j_{h,G} \leq j^r_{p,G}$, so in order to prove the converse inequality, consider $f$ in $L^{\infty}(A)$ with $\la_G(A)\leq v$. By the Lemma \ref{Thick} and since $||\nabla_h^1||_p \geq ||\nabla_h||_p \geq ||\nabla_{S_G}||_p$ we have that:
\begin{equation*}
    \dfrac{||f||_p}{||\nabla_{2h}^1 f||_p}\leq C\tilde j_{h,G}(v+C)
\end{equation*}
Which then implies that $j^1_{2h} \preccurlyeq \tilde j_{h,G}$. Since $j^1_{2h} \approx j^1_h \approx j^r_{p,G}$ we obtain the desired result.
\end{proof}


\subsection{Monotonicity of Isoperimetric Profile}
In all of this subsection, we consider $G,H$ being locally compact unimodular groups with fixed compact generating symmetric subsets $S_G,S_H$ respectively.

   \begin{lem}
    Consider $(\Om,\mu)$ a coarsely $m$-to-$1$, $L^p$-measure subgroup coupling from $H$ to $G$ with the same notation from the Definition \ref{coupling}. For any $f$ in $L^p(G,\la_G)$ with $\supp (f)$ of finite measure, let's consider $\tilde f$ in $L^p(\Om,\mu)$ defined by:
        \begin{equation*}
        \tilde f(g\ast x) = f(g)
        \end{equation*}
        for all $g$ in $G$ and $x$ in $\df_G$. We then have that $||\tilde f||_p^p= \nu_G(\df_G)\cdot||f||_p^p $ and that:
        \begin{equation*}
        ||\nabla_H \tilde f||^p_p \leq  C ||\nabla_G^r f||^p_p 
        \end{equation*}
        where $C=\sup_{s \in S_H}\int_{\df_G} |c(s,x)|_G^p d\nu_G(x) $ and $||\nabla_H \tilde f ||_p$ is the gradient with respect to the $H$-action on $L^p(\Om,\mu)$ coming from the action $H\act (\Om,\mu)$. 
    \label{Gradiente}
    \end{lem}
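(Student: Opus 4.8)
The plan is to transport every integral over $\Om$ back to the product $G\times\df_G$ through the measure-space isomorphism $i_G\colon (G\times\df_G,\la_G\tp\nu_G)\to(\Om,\mu)$ of Definition \ref{coupling}, and then to exploit both the commutativity of the $G$- and $H$-actions and the defining relation of the cocycle $c_G$. Since $i_G$ is an isomorphism of measured spaces, almost every $\om$ is uniquely of the form $g*x$ with $g\in G$ and $x\in\df_G$, so $\tilde f$ is well defined.

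For the norm identity I would simply unfold, using $\tilde f(g*x)=f(g)$ and $(i_G)_*(\la_G\tp\nu_G)=\mu$,
\begin{equation*}
||\tilde f||_p^p=\int_\Om|\tilde f|^p\,d\mu=\int_{G\times\df_G}|f(g)|^p\,d\la_G(g)\,d\nu_G(x),
\end{equation*}
and Fubini factors the right-hand side as $\nu_G(\df_G)\cdot||f||_p^p$; this also certifies that $\tilde f\in L^p(\Om,\mu)$.

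The core of the argument is the gradient estimate. Recalling that $||\nabla_H\tilde f||_p^p=\sup_{s\in S_H}||\tilde f-s*\tilde f||_p^p$, I would fix $s\in S_H$ and write, again through $i_G$,
\begin{equation*}
||\tilde f-s*\tilde f||_p^p=\int_{G\times\df_G}\bigl|f(g)-\tilde f(s^{-1}*(g*x))\bigr|^p\,d\la_G(g)\,d\nu_G(x).
\end{equation*}
The key algebraic step is to rewrite $s^{-1}*(g*x)$: because the two actions commute and $s^{-1}*x=c_G(s^{-1},x)*(s^{-1}\cdot x)$ by the definition of the cocycle, one obtains $s^{-1}*(g*x)=(g\,c_G(s^{-1},x))*(s^{-1}\cdot x)$, whence $\tilde f(s^{-1}*(g*x))=f(g\,c_G(s^{-1},x))$. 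For each fixed $x$ the inner integral is then exactly $||f-\rho(c_G(s^{-1},x))f||_p^p$, the displacement of $f$ by the right regular representation at the element $c_G(s^{-1},x)$.

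Finally I would apply Lemma \ref{ineq} for the right action to bound this inner integral by $|c_G(s^{-1},x)|_G^p\,||\nabla_G^r f||_p^p$, integrate over $\df_G$, and take the supremum over $s\in S_H$; since $S_H$ is symmetric the resulting constant is exactly $\sup_{s\in S_H}\int_{\df_G}|c_G(s,x)|_G^p\,d\nu_G(x)=C$, which is finite by the $L^p$-hypothesis on the coupling. I expect the only delicate point to be the bookkeeping in the identity for $s^{-1}*(g*x)$ --- keeping straight which factor the $G$-action moves versus the $H$-action, and the inverse $s^{-1}$ coming from the action on functions --- since once the integrand is in the form $|f(g)-f(g\,c_G(s^{-1},x))|^p$ the rest is the immediate application of Lemma \ref{ineq} followed by Fubini.
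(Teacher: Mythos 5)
Your proposal is correct and follows essentially the same route as the paper: transport the integral to $G\times\df_G$ via $i_G$, use commutativity of the actions together with the cocycle relation to rewrite $\tilde f(s^{-1}*(g*x))$ as $f(g\,c_G(s^{-1},x))$, recognize the inner integral as the displacement under the right regular representation, and conclude with Lemma \ref{ineq} and integration over $\df_G$. The only cosmetic difference is that the paper estimates $\|\tilde f-s^{-1}*\tilde f\|_p$ so the cocycle appears as $c(s,x)$ directly, whereas you estimate $\|\tilde f-s*\tilde f\|_p$ and invoke the symmetry of $S_H$ at the end; the two are equivalent.
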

\begin{proof}
    Given any $s$ in $S_H$, we have that:
    \begin{align*}
        ||\tilde f- s^{-1}\ast\tilde f||_p^p &= \int_{\Om} |\tilde f(\omega)-\tilde f(s\ast \omega)|^p d\mu(\omega)\\
        &= \int_{G\times \df_G} |\tilde f(g\ast x)-\tilde f(s\ast g\ast x)|^p d\la_G(g) d\nu_G(x).
    \end{align*}
However since both actions commute, we have that $s\ast g\ast x = g\ast s\ast x$; this implies that $s\ast g\ast x= (g\cdot c(s,x))\ast (s\cdot x)$ with $g\ast c(s,x)$ in $G$ and $s\cdot x$ in $\df_G$ which in turn implies that:
\begin{equation*}
    \tilde f(s \ast g \ast x) =  f (g\cdot c(s,x)).
\end{equation*}
This then by Lemma \ref{ineq} implies that:
\begin{align*}
    ||\tilde f- s^{-1}\ast\tilde f||_p^p &= \int_{G\times \df_G} |f(g)- f(g\cdot c(s,x))|^p d\la_G(g) d\nu_G(x). \\
    &\leq \int_{\df_G} ||f-\rho(c(s,x))f||^p_p d\nu_G(x).\\
    &\leq \int_{\df_G} |c(s,x)|_G^p .||\nabla^r_G f||^p_p d\nu_G(x) \\
    &\leq C ||\nabla_G^r f||_p^p.   
\end{align*}
Which implies the desired result. 
\end{proof}


Moreover, we will also obtain the sub-linear version of the preceding lemma:

\begin{lem}
    Let's consider $\varphi\colon (0,\infty)\to (0,\infty)$ a function such that $\varphi$ and $t\mapsto \frac{t}{\varphi(t)}$ are non-decreasing functions. Consider $(\Om,\mu)$ a coarsely $m$-to-$1$, $\varphi$-integrable measure subgroup coupling from $H$  to $G$ with the same notations from Definition \ref{coupling}. For any $f$ in $L^1(G,\la_G)$ with $\supp(f)$ of finite measure, let's consider $\tilde f$ in $L^1(\Om,\mu)$ defined by:
    \begin{equation*}
        \tilde f(g*x)= f(g)
    \end{equation*}
    for all $g$ in $G$ and $x$ in $\mathcal F_G$. We then have that $||\tilde f||_1= \nu_G(\df_G)||f||_1$ and that:
    \begin{equation*}
        ||\nabla_H \tilde f||_1\leq C \frac{||f||_1}{\varphi(||f||_1)}
    \end{equation*}
    where $C=2\sup_{s\in S_G} \int_{\df_G} \varphi(|c(s,x)|_G ||\nabla_G^r f||_1) d\nu_G(x)$ and $||\nabla_H \tilde f ||_1$ is the gradient with respect to the $H$-action on $L^1(\Om,\mu)$ coming from the action $H\act (\Om,\mu)$.
\label{GradientSublinear}

\end{lem}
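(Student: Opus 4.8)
The plan is to mirror the computation carried out for Lemma \ref{Gradiente}, replacing the linear estimate of Lemma \ref{ineq} by its sublinear counterpart, Lemma \ref{ineq2}. First I would record the norm identity: using the measure-space isomorphism $i_G\colon (G\times\df_G,\la_G\tp\nu_G)\to(\Om,\mu)$ together with the defining relation $\tilde f(g*x)=f(g)$, the integrand $|\tilde f(g*x)|=|f(g)|$ does not depend on $x$, so
\begin{equation*}
||\tilde f||_1 = \int_{G\times\df_G}|f(g)|\,d\la_G(g)\,d\nu_G(x) = \nu_G(\df_G)\,||f||_1,
\end{equation*}
which is the first assertion (finite since $\nu_G$ is a probability measure).

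Next I would fix $s\in S_H$ and compute $||\tilde f - s^{-1}*\tilde f||_1$. Exactly as in Lemma \ref{Gradiente}, the commutation of the two actions yields $s*g*x=(g\cdot c(s,x))*(s\cdot x)$, whence $\tilde f(s*g*x)=f(g\cdot c(s,x))$, and therefore, after pushing the integral through $i_G$ and using left-invariance of $\la_G$,
\begin{equation*}
||\tilde f - s^{-1}*\tilde f||_1 = \int_{\df_G}\Big(\int_G |f(g)-f(g\cdot c(s,x))|\,d\la_G(g)\Big)\,d\nu_G(x) = \int_{\df_G}||f-\ro(c(s,x))f||_1\,d\nu_G(x).
\end{equation*}
The key step is to bound the inner integrand by applying Lemma \ref{ineq2} with $g=c(s,x)$ acting through the right regular representation, which gives
\begin{equation*}
||f-\ro(c(s,x))f||_1 \leq 2\,\varphi\big(|c(s,x)|_G\,||\nabla_G^r f||_1\big)\,\frac{||f||_1}{\varphi(||f||_1)}.
\end{equation*}

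Finally, since the factor $||f||_1/\varphi(||f||_1)$ is independent of $x\in\df_G$, it pulls out of the integral over $\df_G$, so that
\begin{equation*}
||\tilde f - s^{-1}*\tilde f||_1 \leq 2\Big(\int_{\df_G}\varphi\big(|c(s,x)|_G\,||\nabla_G^r f||_1\big)\,d\nu_G(x)\Big)\frac{||f||_1}{\varphi(||f||_1)} \leq C\,\frac{||f||_1}{\varphi(||f||_1)},
\end{equation*}
and taking the supremum over $s\in S_H$ (which coincides with the supremum over $s^{-1}$ since $S_H$ is symmetric and $\mu$ is $H$-invariant) yields the claimed bound on $||\nabla_H\tilde f||_1$. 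I do not expect a genuine obstacle here, as the argument is structurally identical to Lemma \ref{Gradiente}; the only points requiring care are that the $\varphi$-integrability hypothesis is exactly what guarantees the finiteness of the constant $C$, and that the two crude estimates $||f-g*f||_1\leq |g|_G||\nabla_G^r f||_1$ and $||f-g*f||_1\leq 2||f||_1$ are combined, via the monotonicity of $t\mapsto t/\varphi(t)$, into the single sublinear bound of Lemma \ref{ineq2} that drives the whole computation.
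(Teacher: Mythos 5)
Your proposal is correct and follows essentially the same route as the paper's proof: the same commutation identity $s*g*x=(g\cdot c(s,x))*(s\cdot x)$, the same reduction to $\int_{\df_G}||f-\ro(c(s,x))f||_1\,d\nu_G(x)$ via the isomorphism $i_G$, and the same application of Lemma \ref{ineq2} to get the sublinear bound. Your added remarks (finiteness of $C$ from $\varphi$-integrability, symmetry of $S_H$ when passing from $s^{-1}$ to $s$ in the gradient) are sound and only make explicit what the paper leaves implicit.
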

\begin{proof}
    Given any $s$ in $S_H$, we have that:
    \begin{align*}
        ||\tilde f- s^{-1}\ast\tilde f||_1 &= \int_{\Om} |\tilde f(\omega)-\tilde f(s\ast \omega)|d\mu(\omega)\\
        &= \int_{G\times \df_G} |\tilde f(g\ast x)-\tilde f(s\ast g\ast x)| d\la_G(g) d\nu_G(x).
    \end{align*}
However since both actions commute, we have that $s\ast g\ast x = g\ast s\ast x$; this implies that $s\ast g\ast x= (g\cdot c(s,x))\ast (s\cdot x)$ with $g\ast c(s,x)$ in $G$ and $s\cdot x$ in $\df_G$ which in turn implies that:
\begin{equation*}
    \tilde f(s \ast g \ast x) =  f (g\cdot c(s,x)).
\end{equation*}
This then by Lemma \ref{ineq2} implies that:
\begin{align*}
    ||\tilde f- s^{-1}\ast\tilde f||_1 &= \int_{G\times \df_G} |f(g)- f(g\cdot c(s,x))| d\la_G(g) d\nu_G(x). \\
    &\leq \int_{\df_G} ||f-\rho(c(s,x))f||_1  d\nu_G(x).  \\
    &\leq \int_{\df_G} 2\varphi(|c(s,x)|_G||\nabla_G^r f||_1)\frac{||f||_1}{\varphi(||f||_1)} d\nu_G(x). \\
    &\leq C \frac{||f||_1}{\varphi(||f||_1)}.  
\end{align*}
where $\rho\colon G\act L^p(G)$ is the right regular representation. We have thus proved the desired result. 
\end{proof}

We have proved in Lemma \ref{thickprofile}, that in order to compute the isoperimetric profile, we only need to restrict ourselves to $h$-thick subsets. This will be important in our proof of the monotonicity of the isoperimetric profile, as such we will need the following lemma:

\begin{lem}
\label{mto1}
Let $f\colon H\to G$ be an application such that the preimage of any ball in $G$ of radius $4h$ can be covered by at most $m$ balls of radius $s$ in $H$. If $A$ is $h$-thick, that is, it is the union of balls of radius at least $h$, and of finite measure; we then have that for some $C_1>0$:
\begin{equation*}
    \la_H(f^{-1}(A))\leq C_1.\la_G(A)
\end{equation*}
\end{lem}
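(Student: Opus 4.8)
The plan is to reduce the measure comparison to a counting argument built on a single, well-chosen discretization of $A$. Since $A$ is $h$-thick we may write $A=[C]_h$ for some $C\subset G$, and by the very definition of thickness $B(c,h)\subset A$ for every $c\in C$. First I would fix $D$, a maximal $h$-discrete subset of $C$. By Lemma \ref{skeleton} this gives $C\subset [D]_h$, and hence, by the triangle inequality, $A=[C]_h\subset [D]_{2h}=\bigcup_{d\in D}B(d,2h)$. This is the cover of $A$ that I will push forward through $f$.

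The key is that this net $D$ does double duty. On the one hand, because $D$ is $h$-discrete the balls $\bigl(B(d,h/2)\bigr)_{d\in D}$ are pairwise disjoint, and since $D\subset C$ each of them satisfies $B(d,h/2)\subset B(d,h)\subset A$; using that $\la_G$ is left-invariant, so that every $h/2$-ball has the same measure $v:=\la_G(B(1_G,h/2))>0$, this yields
\[
\la_G(A)\ \geq\ \sum_{d\in D}\la_G(B(d,h/2))\ =\ |D|\,v ,
\]
which in particular shows that $D$ is finite. On the other hand, since $2h\leq 4h$, each $B(d,2h)$ is contained in a ball of radius $4h$, so the hypothesis on $f$ guarantees that $f^{-1}(B(d,2h))$ can be covered by at most $m$ balls of radius $s$ in $H$. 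Covering $f^{-1}(A)\subset\bigcup_{d\in D}f^{-1}(B(d,2h))$ accordingly and using left-invariance of $\la_H$, so that every $s$-ball has measure $w:=\la_H(B(1_H,s))$, gives
\[
\la_H(f^{-1}(A))\ \leq\ m\,|D|\,w .
\]

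Combining the two displays produces $\la_H(f^{-1}(A))\leq (mw/v)\,\la_G(A)$, so that $C_1=mw/v$ works and is uniform in $A$. The only point demanding care is the bookkeeping of scales: $h$-thickness forces the net to be taken inside $C$ rather than merely inside $A$, so that the disjoint $h/2$-balls genuinely lie inside $A$ and deliver the lower bound on $\la_G(A)$, while at the same time the resulting $2h$-cover stays within the $4h$-scale at which the covering hypothesis on $f$ is available. Neither step is analytically delicate; the essential idea is simply that one maximal $h$-discrete net simultaneously bounds the volume of $A$ from below and the number of $4h$-balls needed to cover it from above.
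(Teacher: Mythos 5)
Your proof is correct and takes essentially the same approach as the paper's: a maximal net in $C$ obtained via Lemma \ref{skeleton} is used simultaneously to cover $A$ (and hence $f^{-1}(A)$, through the covering hypothesis on $f$) and to pack disjoint balls inside $A$ that bound $\la_G(A)$ from below, yielding $C_1 = m\,\la_H(B(1_H,s))/\la_G(B(1_G,\cdot))$. The only difference is the choice of scales: the paper takes a maximal $3h$-discrete net (so $A\subset[Z]_{4h}$ and the disjoint balls have radius $h$), while you take an $h$-discrete net and work at radii $2h$ and $h/2$; both stay within the $4h$ hypothesis, so this is immaterial.
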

\begin{proof}
    Since $A$ is $h$-thick we can suppose that there exists $C$ such that $A=[C]_h$. Let's consider $Z$ a maximal $3h$-discrete subset of $C$, by the lemma \ref{skeleton}, we have that $C\subset [Z]_{3h}$, which then implies $A\subset [Z]_{4h}$. Moreover, let's note that since $Z$ is $3h$-discrete, we also have that $[Z]_{h}\subset A$ which implies that:
    \begin{equation*}
        \#Z\cdot \la_G(B(1_G,h)) \leq \la_G(A)
    \end{equation*}
    By the hypothesis on $f$ we have that $\la_H(f^{-1}(A))\leq m\#Z \cdot \la_H(B(1_H,s))$ which implies that for $C_1=m\cdot \la_H(B(1_H,s))/ \la_G(B(1_G,h)) $, we have that:
    \begin{equation*}
        \la_H(f^{-1}(A))\leq C_1.\la_G(A)
    \end{equation*}
    Which concludes the proof.
\end{proof}

Given $(\Om,\mu)$ a coarsely $m$-to-$1$ measure subgroup coupling from $H$ to $G$ with all the notations from \ref{coupling}. Let's consider then the family of functions $(\tilde f_x)_{x\in \df_G}\colon H\to \mathbb R$ given by $\tilde f_x(h)=\tilde f(h*x)$, note that since $(\Om,\mu)$ is a coarsely $m$-to-$1$ coupling if $\supp f\subset A$ and $A$ is $h$-thick, by the lemma \ref{mto1} we have that there exists $C_1>0$ such that $\la_H(c_x^{-1}(A))\leq C_1\la_G(A)$. Since $\supp \tilde f_x\subset (c_x^{-1}(A))^{-1}$ and $H$ is unimodular, we then have that for all $x$ in $\df_G$:
\begin{equation*}
 \la_H(\supp \tilde f_x)\leq C_1\cdot\la_G(A)   
\end{equation*}

        

We will then have the following two theorems:

\begin{teo}
\label{teoLp}
    Let $H$ be a coarsely $m$-to-$1$ $L^p$-measure subgroup of $G$, we then have that:
    \begin{equation*}
        j_{p,G} \preccurlyeq j_{p,H}
    \end{equation*}
\end{teo}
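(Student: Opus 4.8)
The plan is to turn a near-optimal test function for $j_{p,G}$ into a single test function on $H$ whose support has controlled measure and whose Rayleigh quotient is comparable, thereby bounding $j_{p,H}$ from below by $j_{p,G}$. First I would reduce to thick sets: by Lemma \ref{thickprofile} it suffices to bound $\tilde j_{h,G}(v)$, so fix $v$, and given $\eps>0$ choose an $h$-thick set $A\subset G$ with $\la_G(A)\le v$ and $f\in L^\infty(A)$ with $\tfrac{\|f\|_p}{\|\nabla_G^r f\|_p}\ge \tilde j_{h,G}(v)-\eps$. Pulling $f$ back through the $G$-action, i.e.\ setting $\tilde f(g*x)=f(g)$ for $x\in\df_G$, Lemma \ref{Gradiente} gives the two crucial estimates $\|\tilde f\|_p^p=\nu_G(\df_G)\,\|f\|_p^p$ and $\|\nabla_H\tilde f\|_p^p\le C\,\|\nabla_G^r f\|_p^p$, with $C<\infty$ precisely because the coupling is $L^p$. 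The task is then to transfer the good ratio of $\tilde f$ for the $H$-action on $\Om$ down to a single function on $H$.

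Next I would decompose $\tilde f$ along the $H$-action. Using $\mu=(i_H)_*(\la_H\tp\nu_H)$ and writing $\tilde f_y(h)=\tilde f(h*y)$ for $y\in\df_H$, one has the exact identity $\int_{\df_H}\|\tilde f_y\|_p^p\,d\nu_H(y)=\|\tilde f\|_p^p=\nu_G(\df_G)\|f\|_p^p$. The delicate point, and the main obstacle, is the gradient: the fibrewise gradient needs the supremum over $s\in S_H$ inside the integral over $\df_H$, whereas Lemma \ref{Gradiente} only controls, for each fixed $s$, the integral of the $s$-difference, and in the non-discrete setting $S_H$ is uncountable, so one cannot sum over generators as in the discrete case. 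I would circumvent this exactly as the profile was set up, by passing to the averaged gradient $\nabla^2_h$. Since an average is dominated by a supremum, Lemma \ref{ineq} applied to the $H$-action on $L^p(\Om,\mu)$ gives
\[
\int_\Om\fint_{s\in B_H(1_H,h)}\!\!|\tilde f(\om)-\tilde f(s*\om)|^p\,d\la_H(s)\,d\mu(\om)\ \le\ \Big(\fint_{B_H(1_H,h)}\!\!|s|_H^p\,d\la_H(s)\Big)\|\nabla_H\tilde f\|_p^p\ \le\ C'\|\nabla_G^r f\|_p^p,
\]
a fixed constant $C'$ since $h$ is fixed; and because the average over $s$ commutes with $\int_{\df_H}$, the left-hand side equals $\int_{\df_H}\|\nabla^2_h\tilde f_y\|_p^p\,d\nu_H(y)$ (the left and right versions coinciding by unimodularity of $H$).

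A pigeonhole and the support estimate then finish the argument. Since $\int_{\df_H}\|\tilde f_y\|_p^p\,d\nu_H=\nu_G(\df_G)\|f\|_p^p$ while $\int_{\df_H}\|\nabla^2_h\tilde f_y\|_p^p\,d\nu_H\le C'\|\nabla_G^r f\|_p^p$, the set of $y$ on which $\|\nabla^2_h\tilde f_y\|_p^p\le \tfrac{2C'\|\nabla_G^r f\|_p^p}{\nu_G(\df_G)\|f\|_p^p}\|\tilde f_y\|_p^p$ carries more than half of the mass $\|\tilde f_y\|_p^p\,d\nu_H$, hence is nonempty; pick $y^*$ there, so that $\tfrac{\|\tilde f_{y^*}\|_p}{\|\nabla^2_h\tilde f_{y^*}\|_p}\gtrsim \tfrac{\|f\|_p}{\|\nabla_G^r f\|_p}$. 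For the support I would write $y^*=g_0*x_0$ with $x_0\in\df_G$, whence $\tilde f_{y^*}(h)=f\big(g_0\,c_G(h,x_0)\big)$ and $\supp\tilde f_{y^*}=\big(c_{x_0}^{-1}(g_0^{-1}A)\big)^{-1}$; as $d_G$ is left-invariant, $g_0^{-1}A$ is again $h$-thick of measure $\le v$, the coarse $m$-to-$1$ hypothesis (together with balancedness, to pass from scale $3$ to scale $4h$) makes $c_{x_0}$ satisfy the hypothesis of Lemma \ref{mto1}, and unimodularity of $H$ gives $\la_H(\supp\tilde f_{y^*})=\la_H(c_{x_0}^{-1}(g_0^{-1}A))\le C_1 v$. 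This is exactly the uniform support bound recorded before the statement, now read on an $H$-fibre.

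Thus $\tilde f_{y^*}$ is an admissible test function supported on a set of measure $\le C_1 v$, so $j^2_{h,H}(C_1 v)\ge \tfrac{\|\tilde f_{y^*}\|_p}{\|\nabla^2_h\tilde f_{y^*}\|_p}\gtrsim \tilde j_{h,G}(v)-\eps$. Letting $\eps\to0$ and invoking $j^2_{h,H}\approx j_{p,H}$ (Propositions 7.2 and 10.1 of \cite{IsoProfilLC}) together with $\tilde j_{h,G}\approx j_{p,G}$ (Lemma \ref{thickprofile}) yields $j_{p,G}\preccurlyeq j_{p,H}$. The step I expect to require the most care is the gradient comparison flagged above: reconciling the supremum-over-$S_H$ definition of the gradient with the fibre decomposition, which forces the systematic use of the averaged gradient $\nabla^2_h$ and of the equivalence of the associated isoperimetric profiles.
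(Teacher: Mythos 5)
Your proposal is correct, and it follows the same overall architecture as the paper's proof --- pull back a near-optimal test function on a thick set via Lemma \ref{Gradiente}, disintegrate along the $H$-fibres, pigeonhole to find a good fibre $\tilde f_{y}$, and bound $\la_H(\supp \tilde f_{y})$ by Lemma \ref{mto1} --- but it genuinely diverges at the step you flagged, and the divergence matters. The paper works with the sup-gradient throughout: it assumes $\|\nabla^l_H \tilde f_y\|_p \geq 2\overline C \frac{\|\nabla^r_G f\|_p}{\|f\|_p}\|\tilde f_y\|_p$ for a.e.\ $y$, ``integrates over $\df_H$,'' and claims a contradiction with $\|\nabla_H \tilde f\|_p \leq \overline C \frac{\|\nabla^r_G f\|_p}{\|f\|_p}\|\tilde f\|_p$. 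As you observe, that exchange goes the wrong way: for each fixed $s \in S_H$ one has $\|\tilde f - s*\tilde f\|_p^p = \int_{\df_H} \|\tilde f_y - \la(s)\tilde f_y\|_p^p \, d\nu_H(y)$, so $\|\nabla_H \tilde f\|_p^p = \sup_{s}\int_{\df_H}(\cdots)\,d\nu_H \leq \int_{\df_H}\sup_{s}(\cdots)\,d\nu_H = \int_{\df_H}\|\nabla^l_H \tilde f_y\|_p^p \, d\nu_H(y)$, and a lower bound on the larger quantity does not contradict an upper bound on the smaller one. (In the discrete setting of \cite{Romain} the gradient is a sum over generators, so the exchange is an honest Fubini; the paper imports that step without modification.) Your substitute --- dominating the $B_H(1_H,h)$-averaged differences by $\|\nabla_H\tilde f\|_p^p$ via Lemma \ref{ineq}, then exchanging the $s$-average with $\int_{\df_H}$ by Fubini to get $\int_{\df_H}\|\nabla^2_h \tilde f_y\|_p^p\, d\nu_H(y) \leq C'\|\nabla^r_G f\|_p^p$ --- is a valid repair, and its price (quoting $j^2_{h,H}\approx j^1_{h,H} \approx j_{p,H}$ from \cite{IsoProfilLC}) is exactly the machinery the paper already sets up in Section 4.1. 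Two further points where you are more careful than the source: the paper proves the support estimate $\la_H(\supp\tilde f_x)\leq C_1\la_G(A)$ only for fibres over $x\in\df_G$ but applies it to fibres over $y\in \df_H$, which your decomposition $y^*=g_0 * x_0$ (using that $g_0^{-1}A$ is still $h$-thick of the same measure) handles explicitly; and your balancedness remark legitimately bridges the scale-$3$ hypothesis of the coarsely $m$-to-$1$ definition with the scale-$4h$ hypothesis of Lemma \ref{mto1}. In short: same skeleton, but your systematic use of the averaged gradient at the disintegration step is what actually makes the pigeonhole argument run in the locally compact setting.
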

\begin{proof}

Let's consider $\eps>0$ and $A$ a $h$-thick subset of $G$ of finite measure, there exists then $f$ in $L^p(G,\mu_G)$ such that $\supp(f) \subset A$ and:
\begin{equation*}
    J_{p,G}^r(A) -\epsilon \leq \dfrac{||f||_p}{||\nabla_G^r f||_p} \leq J_{p,G}^r(A)
\end{equation*}
Using Lemma \ref{Gradiente} we have that, there exists a constant $\overline C$ such that:
\begin{equation}
   \dfrac{||\nabla_H \tilde f||_p}{||\tilde f||_p}  \leq \overline C \dfrac{||\nabla_G^r f||_p}{||f||_p}. 
    \label{eq1}   
\end{equation}
Furthermore, this implies that there exists $Z\subset \df_H$ with $\nu_H(Z)>0$ such that:
\begin{equation*}
    ||\nabla_H^l \tilde f_y||_p <  2\overline C \dfrac{||\nabla_G^r f ||_p}{||f||_p}||\tilde f_y||_p  
\end{equation*}
for all $y$ in $Z$. This is true, since if that were not the case, we would have that for all $y$ in conull subset of $\df_H$:
\begin{equation*}
    ||\nabla_H^l \tilde f_y||_p \geq 2\overline C \dfrac{||\nabla_G^r f ||_p}{||f||_p}||\tilde f_y||_p  
\end{equation*}
Which by integrating over $\df_H$ would imply that:
\begin{equation*}
    ||\nabla_H \tilde f||\geq 2 \overline C \dfrac{||\nabla_G^r f ||_p}{||f||_p}||\tilde f||_p
\end{equation*}
that contradicts the inequality (\ref{eq1}).
For any $y$ in $Z$, we have that $\tilde f_y$ is nonzero. Moreover, since $(\Om,\mu)$ is a coarsely $m$-to-$1$ coupling, the Lemma \ref{mto1} implies that $\supp \tilde f_y$ has finite measure, which implies that $||\nabla^l_H \tilde f_y||_p>0$.
This implies that for any $y$ in $Z$ we have that:
\begin{equation*}
    \dfrac{||f||_p}{||\nabla^r_G f||_p} \leq 2\overline C \dfrac{||\tilde f_y||_p}{||\nabla_H^l \tilde f_y||_p}
\end{equation*}
Furthermore, we get:
\begin{equation*}
    J_p^r(A)-\epsilon \leq 2\overline C J^l_p(C_1v).
\end{equation*}
Since we can make $\epsilon$ converge to $0$, we obtain:
\begin{equation*}
    J_p^r(A) \leq 2\overline C J^l_p(C_1v).
\end{equation*}
Which then implies that:
\begin{equation*}
     j_{p,G}^r \preccurlyeq j^l_{p,H}
\end{equation*}
as we wanted.
\end{proof}
In a similar manner, we obtain the sublinear version of the last theorem:
\begin{teo}
\label{teosublinear}
Let's consider $\varphi\colon (0,\infty)\to (0,\infty)$ a function such that $\varphi$ and $t\to \frac{t}{\varphi(t)}$ are non-decreasing functions. If $H$ is a coarsely $m$-to-$1$ $\varphi$-integrable measure subgroup of $G$. We then have that:
    \begin{equation*}
        \varphi \circ j_{1,G}\preccurlyeq j_{1,H}
    \end{equation*}
    
\end{teo}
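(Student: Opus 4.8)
The plan is to follow the proof of Theorem~\ref{teoLp} line by line, replacing Lemma~\ref{Gradiente} by its sublinear analogue Lemma~\ref{GradientSublinear}; the one genuinely new point is that the constant in Lemma~\ref{GradientSublinear} depends on $\|\nabla_G^r f\|_1$, and I must choose the normalization of $f$ so that this constant becomes harmless. By Lemma~\ref{thickprofile} it is enough to estimate $\overline J_{1,G}(A)$ for $A$ an $h$-thick subset of finite measure $v=\la_G(A)$. Fix $\eps>0$ and pick $f\in L^1(G,\la_G)$ with $\supp(f)\subset A$ and $\frac{\|f\|_1}{\|\nabla_G^r f\|_1}\ge J_{1,G}^r(A)-\eps$. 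Since this ratio is invariant under rescaling $f$, I first normalize so that $\|\nabla_G^r f\|_1=1$; the decisive consequence is that the constant $C=2\sup_{s\in S_H}\int_{\df_G}\varphi\bigl(|c(s,x)|_G\,\|\nabla_G^r f\|_1\bigr)\,d\nu_G(x)$ of Lemma~\ref{GradientSublinear} collapses to the genuine constant $2K_\varphi$, where $K_\varphi=\sup_{s\in S_H}\int_{\df_G}\varphi(|c(s,x)|_G)\,d\nu_G(x)<\infty$ is finite exactly because the coupling is $\varphi$-integrable. After this normalization one has $\|f\|_1=\frac{\|f\|_1}{\|\nabla_G^r f\|_1}\ge J_{1,G}^r(A)-\eps$.

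Define $\tilde f\in L^1(\Om,\mu)$ by $\tilde f(g*x)=f(g)$. Lemma~\ref{GradientSublinear} then gives $\|\nabla_H\tilde f\|_1\le 2K_\varphi\,\dfrac{\|f\|_1}{\varphi(\|f\|_1)}$, while $\|\tilde f\|_1=\nu_G(\df_G)\|f\|_1$, so that
\[
\frac{\|\nabla_H\tilde f\|_1}{\|\tilde f\|_1}\le \frac{2K_\varphi}{\nu_G(\df_G)\,\varphi(\|f\|_1)}.
\]
Exactly as in Theorem~\ref{teoLp}, an averaging argument over the $H$-fibres of $\Om$ (via the identification $i_H\colon H\times\df_H\to\Om$) produces a set $Z\subset\df_H$ of positive measure such that, writing $\tilde f_y(h)=\tilde f(h*y)$, one has $\|\nabla_H^l\tilde f_y\|_1< 2\,\frac{\|\nabla_H\tilde f\|_1}{\|\tilde f\|_1}\,\|\tilde f_y\|_1$ for every $y\in Z$. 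For such $y$ the slice $\tilde f_y$ is nonzero, and since the coupling is coarsely $m$-to-$1$, Lemma~\ref{mto1} gives $\la_H(\supp\tilde f_y)\le C_1 v$; in particular $\|\nabla_H^l\tilde f_y\|_1>0$.

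Combining these bounds, for $y\in Z$ we obtain
\[
\frac{\|\tilde f_y\|_1}{\|\nabla_H^l\tilde f_y\|_1}\ge \tfrac12\,\frac{\|\tilde f\|_1}{\|\nabla_H\tilde f\|_1}\ge \frac{\nu_G(\df_G)}{4K_\varphi}\,\varphi(\|f\|_1)\ge \frac{\nu_G(\df_G)}{4K_\varphi}\,\varphi\bigl(J_{1,G}^r(A)-\eps\bigr),
\]
using that $\varphi$ is non-decreasing. Because $\la_H(\supp\tilde f_y)\le C_1 v$, the left-hand side is at most $j_{1,H}^l(C_1 v)$; letting $\eps\to 0$ yields $\varphi\bigl(J_{1,G}^r(A)\bigr)\le \frac{4K_\varphi}{\nu_G(\df_G)}\,j_{1,H}^l(C_1 v)$. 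Taking the supremum over $h$-thick $A$ with $\la_G(A)\le v$, invoking Lemma~\ref{thickprofile}, and using the monotonicity of $\varphi$ to interchange it with the supremum defining the profile, I conclude $\varphi\circ j_{1,G}^r\preccurlyeq j_{1,H}^l$, which by unimodularity is the asserted $\varphi\circ j_{1,G}\preccurlyeq j_{1,H}$.

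The step I expect to be the main obstacle is the control of the $f$-dependent constant of Lemma~\ref{GradientSublinear}: for a general sublinear $\varphi$ there is no product rule to split $\varphi(|c(s,x)|_G\,\|\nabla_G^r f\|_1)$, so bounding $C$ naively would leave a spurious factor of $\|\nabla_G^r f\|_1$ that destroys the estimate. Normalizing $\|\nabla_G^r f\|_1=1$ removes this difficulty at once, reducing $C$ to the finite quantity $K_\varphi$ supplied by $\varphi$-integrability; the only remaining care is the routine verification that the non-decreasing $\varphi$ commutes, in the asymptotic sense, with the limit $\eps\to0$ and with the passage to the supremum in the definition of the isoperimetric profile.
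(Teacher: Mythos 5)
Your proposal is correct and follows essentially the same route as the paper's own proof: normalize $\|\nabla_G^r f\|_1=1$ so that the $f$-dependent constant of Lemma~\ref{GradientSublinear} collapses to the $\varphi$-integrability constant, transfer $f$ to $\tilde f$ on the coupling, extract a positive-measure set $Z\subset\df_H$ of good slices by the same averaging/contradiction argument, control the slice supports via Lemma~\ref{mto1}, and let $\eps\to 0$. The only differences are cosmetic: you correctly take the supremum over $s\in S_H$ (the paper's displayed constant has a typo $\sup_{s\in S_G}$), and you spell out the final interchange of $\varphi$ with the supremum defining the profile, which the paper leaves implicit.
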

\begin{proof}
    Let's consider $\epsilon > 0$ and $A$ a $h$-thick subset of $G$ of finite measure, by definition, there exists $f$ in $L^1(G,\mu_G)$ such that $\supp f\subset A$ and:
    \begin{equation*}
        J^r_{1,G}(A)-\epsilon \leq  \frac{||f||_1}{||\nabla_G^r f||_1}\leq J^r_{1,G}(A)
    \end{equation*}
    Moreover, up to multiplying with a constant, we can choose $f$ such that $||\nabla_G^r f||_1=1$.
    Using Lemma \ref{GradientSublinear}, we obtain that:
    \begin{equation*}
         ||\nabla_H \tilde f||_1 \leq 2\sup_{s\in S_G}\int_{\df_G} \varphi(|c(s,x)|_G) d\nu_G(x) \frac{||f||_1}{\varphi(||f||_1)}.
    \end{equation*}
    Let's recall as well that $||\tilde f||_1=\nu_G(\df_G)||f||_1$ and that:
    \begin{equation*}
        ||\tilde f||_1=\int_{\df_H} ||\tilde f_y||_1 d\nu_H(y)  
    \end{equation*}
    If we denote $C=\dfrac{2\sup_{s\in S_G} \int_{\df_G} \varphi(|c(s,x)|_G)}{\nu_G(\df_G)}$, it is clear that we have that:
    \begin{equation}
        ||\nabla_H \tilde f||_1\leq \dfrac{C}{\varphi(||f||_1)} ||\tilde f||_1 \label{sublineareqn}
    \end{equation}
    We then obtain the following:
    \begin{claim}
        There exists a set $Z\subset \df_H$ with $\nu_H(Z)>0$ such that for all $y\in Z$, we have that:
        \begin{equation*}
            ||\nabla_H \tilde f_y||_1 < 2C \dfrac{||\tilde f_y||_1}{\varphi(||f||_1)}
        \end{equation*}
    \end{claim}
    \begin{proof}[Proof of Claim]
        If that were not the case, we would have that there exists a $\nu_H$-conull subset of $\df_H$ such that for all $y$ in this subset, we would have that:
        \begin{equation*}
           ||\nabla_H \tilde f_y||_1\geq \dfrac{2C}{\varphi(||f||_1)}||\tilde f_y||_1
        \end{equation*}
        which integrating over $y$ would gives us that:
        \begin{equation*}
            ||\nabla_H \tilde f||\geq \dfrac{2C}{\varphi(||f||_1)} ||\tilde f||_1
        \end{equation*}
        which is a contradiction with the inequality (\ref{sublineareqn}).
    \end{proof}
Note that for any $y$ in $Z$, we have that $||\tilde f_y||_1$ is nonzero. Moreover, since $(\Om,\mu)$ is a coarsely $m$-to-$1$ coupling, the Lemma \ref{mto1} implies that $\supp \tilde f_y$ has finite measure, which implies that $||\nabla^l_H \tilde f_y||_p>0$. Furthermore, for any $y$ in $Z$, we have that:
\begin{equation*}
    \varphi\left(\dfrac{||f||_1}{||\nabla^r_G f||_1}\right) < 2C\dfrac{||\tilde f_y||_1}{||\nabla_H \tilde f_y||_1}
\end{equation*}
Which, since $\varphi$ is increasing, implies that:
\begin{equation*}
    \varphi(J^r_{1,G}(A)-\epsilon) < 2CJ^l_{1,H}(C_1v)
\end{equation*}
Since we can make $\epsilon$ converge to $0$, we obtain:
\begin{equation*}
    J^r_{1,G}(A) \leq 2CJ^l(C_1v)
\end{equation*}
Which then implies that:
\begin{equation*}
     \varphi(j_{1,G}^r) \preccurlyeq j^l_{1,H}
\end{equation*}
as we wanted.
\end{proof}

\ \newline






\bibliographystyle{amsalpha}
\bibliography{refs.bib}

\end{document}